\newtheorem{theorem}{Theorem}[section]
\newtheorem{proposition}{Proposition}[section]
\newtheorem{corollary}{Corollary}[section]
\newtheorem{lemma}{Lemma}[section]
\newcommand{\ed}{\stackrel{\mbox{\tiny $(d)$}}{=}}
\newcommand{\R}{\mathbb{R}}
\newcommand{\N}{\mathbb{N}}
\def\e{\mathbb{E}}
\def\p{\mathbb{P}}
\newcommand{\ind}{\mbox{\rm 1\hspace{-0.04in}I}}
\title[Creeping of L\'evy processes through curves]
{Creeping of L\'evy processes through curves}
\author{Lo\"ic Chaumont}
\address{Lo\"ic Chaumont, Univ Angers, CNRS, LAREMA, SFR MATHSTIC, F-49000 Angers, France}
\email{loic.chaumont@univ-angers.fr}
\author{Thomas Pellas}
\address{Thomas Pellas, Univ Angers, CNRS, LAREMA, SFR MATHSTIC, F-49000 Angers, France}
\email{thomas.pellas@univ-angers.fr}
\keywords{L\'evy process; bivariate subordinator; creeping; Ornstein-Uhlenbeck process}
\subjclass[2020]{60G50}
\date{\today}
\begin{document}

\begin{abstract} A L\'evy process is said to creep through a curve if, at its first passage time 
across this curve, the process reaches it with positive probability. 
We first study this property for bivariate subordinators. Given the graph $\{(t,f(t)):t\ge0\}$ 
of any continuous, non increasing function $f$ such that $f(0)>0$, we give an expression of 
the probability that a bivariate subordinator $(Y,Z)$ issued from 0 creeps through this graph in 
terms of its renewal function and the drifts of the components $Y$ and $Z$.
We apply this result to the creeping probability of any real L\'evy process through the graph of 
any continuous, non increasing function at a time where the process also reaches its past supremum.  
This probability involves the density of the renewal function of the bivariate upward ladder process 
as well as its drift coefficients. We also investigate the case of L\'evy processes conditioned to 
stay positive creeping at their last passage time below the graph of a function. Then we provide 
some examples and we give an application to the probability of creeping through fixed levels by 
stable Ornstein-Uhlenbeck processes. We also raise a couple of open questions along the text.
\end{abstract}

\maketitle

\section{Introduction}\label{int}

A real L\'evy process $X$ starting from 0 under $\p$ is said to creep through the level $x>0$ if 
\begin{equation}\label{2253}
\mathbb{P}(X_{\tau_x^+}=x)>0\,,
\end{equation}
where $\tau_x^+=\inf\{t:X_t>x\}$. The first study of this property is due to Millar \cite{mi} who,
according to his own words, gives a 'reasonably complete solution' of the creeping problem, that is conditions
bearing on the characteristics of $X$ for (\ref{2253}) to hold. It is proved in \cite{mi} that if (\ref{2253}) 
holds for some $x>0$, then it holds
for all $x>0$. We will then simply say that $X$ creeps upward. According to the same paper, when $X$ has
bounded variation, it creeps upward if and only if it has positive drift. When $X$ has unbounded variation,
it creeps upward if it has a Brownian part or if its L\'evy measure $\pi$ satisfies $\int_0^1xd\pi(x)<\infty$.
It does not creep upward if it has no Brownian part and $\int_{-1}^0|x|d\pi(x)<\infty$. When $X$ has no Brownian
part and $\int_0^1xd\pi(x)=\int_{-1}^0|x|d\pi(x)=\infty$, both possibilities can occur and an analytic criterion 
in terms of the characteristic exponent of $X$ is given. As pointed out by Vigon \cite{vi}, this criterion has 
the disadvantage of involving both the characteristic exponent of $X$ and its L\'evy measure and we rarely know 
both at the same time. This gap is filled in \cite{vi} where an integral test only bearing on the L\'evy measure 
is established for a L\'evy process with unbounded variation and no Brownian part to creep upward. Let us also 
mention the work of Rogers \cite{ro} who deduced some results of Millar. 
It seems that it was in the latter article that the term 'creeping' first appeared.\\

According to a remark first made in \cite{mi} and then used in \cite{vi}, the upward ladder 
height process $H$ of $X$ satisfies $H_{S_x^+}=X_{\tau_x^+}$, $x>0$, where $S_x^+=\inf\{t:H_t>x\}$, 
so that creeping of general L\'evy processes can in fact be reduced to that of subordinators. More specifically, 
$X$ creeps upward if and only if $H$ creeps upward, which holds if and only if the drift $d_H$ of $H$ is 
positive according to Neveu \cite{ne} and Kesten \cite{ke}. These authors also proved that in this case, the 
renewal measure of the ladder height process admits a continuous and bounded density $u(x)$ and that the 
creeping probability is then given by 
\begin{equation}\label{7433}
\mathbb{P}(X_{\tau_x^+}=x)=d_Hu(x)\,.
\end{equation}
So the whole creeping problem for a L\'evy process $X$ boils down to giving conditions on its characteristics
for $d_H$ to be positive. Let us also mention a more recent result in this direction by Griffin and Maller 
\cite{gm} who studied the law of the creeping time, that is the law of the first passage time conditionally on 
the creeping event.\\
 
In the present work, we are interested in the probability for L\'evy processes to creep through the graph of   
continuous non increasing functions. We first consider the case of a bivariate subordinator $(Y,Z)$, that 
is a two dimensional L\'evy process issued from 0, whose both coordinates are non decreasing. Let 
$f:(0,\infty)\mapsto(0,\infty)$ be a continuous non increasing function. Our first main result gives an 
expression of the probability that the path $\{(Y_t,Z_t):t>0\}$ crosses the graph $\{(t,f(t)):t>0\}$ in a 
continuous way, that is 
\[\p(Z_{S}=f(Y_S),\,u_0<Y_{S}<u_1),\;\;\;\mbox{where}\;\;\;S=\inf\{t:Z_t>f(Y_t)\},\] 
for all $0\le u_0<u_1\le\infty$. This expression involves the drifts of $Y$ and $Z$ and the renewal function 
of the process $(Y,Z)$, see Theorem \ref{9049}. In particular, for $(Y,Z)$ to creep through the graph of $f$ 
with positive probability, it is necessary that at least one of the drifts of $Y$ and 
$Z$ is positive. This result will then be applied in Theorem \ref{3636} to the probability for a real 
L\'evy process $X$ to creep at its supremum through the graph of $f$. More formally, define the first passage 
time above the curve of $f$ by
\[T_f=\inf \lbrace t>0:X_t>f(t) \rbrace \,.\]
Then we give an explicit expression of the probability 
$\p(X_{T_f}=\overline{X}_{T_f}=f({T_f}),\,t_0<T_f<t_1)$, for all $0\le t_0<t_1\le \infty$, where 
$\overline{X}_{T_f}=\sup_{t\le T_f}X_t$. Our result shows that this probability is composed of two terms. 
The first one represents the contribution of the accumulation of jumps of the ladder height process. This 
shows that the function acts as a fixed (horizontal) barrier through which the process creeps upward. Then 
the second term is due to the accumulation of jumps of the ladder time process. When its drift is positive, 
that is when 0 is not regular for $(-\infty,0)$, this term represents a kind of creeping forward, as if the 
function acted as a vertical barrier. Note that our result actually provides the law of $T_f$ conditionally 
on the event $\{X_{T_f}=\overline{X}_{T_f}=f({T_f})\}$. Our only assumption is that the renewal measure of 
the bivariate upward ladder process has a continuous density. As proved in the present work, it is ensured 
whenever $X_t$ has a bounded density for each $t>0$. The latter result can then be applied to the creeping 
of the L\'evy process $X$ conditioned to stay positive. Thanks to a time reversal argument, we obtain in 
Theorem \ref{3737} the probability for this process to creep at its last passage time below the function 
$f$.\\

The problem of creeping through fixed levels by Ornstein-Uhlenbeck processes is at the origin of our work. 
Indeed, the process $Z_t=z+X_t-\gamma \int_0^t Z_s ds$, $t\ge0$, $\gamma>0$, $z<0$ creeps through $y\in(z,0)$ 
if and only if $X$ creeps through the adapted, continuous decreasing functional 
$t\mapsto y-z+\gamma\int_0^t Z_s ds$. This problem is still an open question. However, when $X$ is stable, 
it can be reduced to the creeping of $X$ through a deterministic function. Then we prove, as an application 
of our main result, that Ornstein-Uhlenbeck processes driven by stable processes with index less than 1 always 
creep through some fixed levels.\\ 

We present our main results in the next section. Theorems \ref{9049}, \ref{3636} and \ref{3737} will be 
proved in Sections \ref{proof}, \ref{proof2} and  \ref{proof3}, respectively. We give some examples for which 
the creeping probabilities can be computed explicitly in Section \ref{examples}, and in Section \ref{OU} we 
present the above-mentioned application of our results to the creeping property of stable Ornstein-Uhlenbeck 
processes.

\section{Main results}\label{main}

\subsection{Creeping of bivariate subordinators through curves}\label{3025}
A bivariate subordinator $(Y,Z)$ is a possibly killed two dimensional L\'evy process starting from 0, 
whose coordinates $Y$ and $Z$ are non decreasing. We set $Y_t=Z_t=\infty$, for $t\in[\zeta,\infty]$, where $\zeta$ 
is the lifetime of $(Y,Z)$. In what follows, the drift of a (univariate) subordinator $Y$ 
will be denoted by $d_Y$. Moreover, a measure on some subset of $\mathbb{R}^d$ is said to be absolutely 
continuous if it is absolutely continuous  with respect to the Lebesgue measure on this subset.\\ 

Our first theorem concerns the creeping of a bivariate subordinator $(Y,Z)$ through the curve defined by the graph 
of a continuous non increasing function $f$. More specifically, we are interested in the probability that the path 
$\{(Y_t,Z_t):t\ge0\}$ crosses continuously the graph $\{(u,f(u)):u\ge0\}$ of $f$. This event can be written as
$\{Z_{S}=f(Y_S)\}$, where $S=\inf\{t:Z_t>f(Y_t)\}$, see Figure \ref{sub_biv}. When it holds,
we will say that $(Y,Z)$ creeps through the graph of $f$.

\begin{theorem}\label{9049}
Let $(Y,Z)$ be a bivariate subordinator with semigroup $\p(Y_t\in dy,\,Z_t\in dz)=p_t(dy,dz)$, $t>0$, 
$y,z\in[0,\infty)$. Assume that the renewal measure $v(dy,dz):=\int_0^{+\infty}p_t(dy,dz)\,dt$ is absolutely 
continuous on $(0,\infty)^2$ and that its density $(y,z)\mapsto v(y,z)$ is continuous on $(0,\infty)^2$. 
Let $f:(0,\infty)\rightarrow(0,\infty)$ be a continuous, non increasing function. Set 
$f(0)=\lim_{t\rightarrow0+}f(t)$, $f(\infty)=\lim_{t\rightarrow\infty}f(t)$ and define,
\[S=\inf\{t:Z_t>f(Y_t)\}\,.\] 
\begin{itemize}
\item[$1.$] Then almost surely, $0<S<\infty$, $Y_{S-}<\infty$ and $Z_{S-}<\infty$. Moreover, the process $(Y,Z)$ 
can neither jump on the graph $\{(u,f(u)):u>0\}$ nor jump from this graph, that is 
\[\p(Z_{S-}=f(Y_{S-}),\Delta(Y,Z)_{S}\neq0)=\p(Z_{S}=f(Y_{S}),\Delta(Y,Z)_{S}\neq0)=0\,,\]
where $\Delta(Y,Z)_{S}=(Y,Z)_{S}-(Y,Z)_{S-}$. 
\item[$2.$] Let $u_0,u_1$ be such that $0\le u_0<u_1\le \infty$. Then the creeping probability of $(Y,Z)$ 
through the graph of $f$ is given by, 
\begin{equation}\label{7229}
\p(Z_{S}=f(Y_S),\,u_0<Y_{S}<u_1)=
d_Z\int_{u_0}^{u_1}v(u,f(u)) \, du-d_Y\int_{u_0}^{u_1}v(u,f(u))\, df(u)\,.
\end{equation}
\item[$3.$] Assume that the measure $p_t(dy,dz)$ is absolutely continuous on $(0,\infty)^2$ for all $t>0$ 
and that the densities $(t,y,z)\mapsto p_t(y,z)$ are continuous on $(0,\infty)^3$. Then for all
$t_0,t_1,u_0,u_1$ such that $0\le t_0<t_1\le \infty$ and $0\le u_0<u_1\le \infty$, 
\begin{eqnarray}
&&\p(Z_{S}=f(Y_S),\,u_0<Y_{S}<u_1,\,t_0<S<t_1)=\nonumber\\
&&\qquad\qquad d_Z\int_{u_0}^{u_1}\int_{t_0}^{t_1}p_t(u,f(u))\,dt \, du-d_Y\int_{u_0}^{u_1}\label{6366}
\int_{t_0}^{t_1}p_t(u,f(u))\,dt\, df(u)\,.
\end{eqnarray}
\end{itemize}
\end{theorem}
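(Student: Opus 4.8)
The plan is to reduce the whole statement to one It\^o--Dynkin computation against test functions that ``ramp'' across the graph $\mathcal G=\{(u,f(u)):u>0\}$; the mechanism behind the formula is that the creeping rate at a point $(u,f(u))$ equals the occupation density $v(u,f(u))$ of the process killed at $S$ times the component of the drift vector $(d_Y,d_Z)$ normal to $\mathcal G$, namely $d_Z\,du-d_Y\,df(u)$.

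\emph{Part 1.} Since $t\mapsto f(Y_t)-Z_t$ is non-increasing, $S$ is its first passage below $0$, and $S<\infty$ (hence $Y_{S-},Z_{S-}<\infty$) because a non-degenerate bivariate subordinator leaves every bounded set a.s. For the statements on jumps, the compensation formula for the Poisson point process of jumps of $(Y,Z)$ bounds the expected number of times $(Y,Z)$ jumps from $\mathcal G$ (resp.\ lands on $\mathcal G$) by a constant multiple of $v(\mathcal G)$ (resp.\ by $\int\nu(dx)\,v(\mathcal G-x)$), with $\nu$ the L\'evy measure; both vanish since $\mathcal G$ and its translates are $2$-dimensional Lebesgue-null while $v$ has a locally bounded density on $(0,\infty)^2$. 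I will use repeatedly that once $Z_t>f(Y_t)$ this remains true, so $(Y,Z)$ never re-enters $D:=\{(y,z):z\le f(y)\}$ after $S$; consequently the occupation measure of $(Y,Z)$ killed at $S$ coincides with $v$ on $D$, and on $\{Z_S=f(Y_S)\}$ the crossing is continuous, $\Delta(Y,Z)_S=0$, a.s.

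\emph{Parts 2 and 3.} Fix $0\le u_0<u_1\le\infty$. For small $\epsilon>0$ set $g_\epsilon(y,z)=\chi(y)\,\beta_\epsilon(z-f(y))$ with $\beta_\epsilon(r)=(1-|r|/\epsilon)^+$ (mollified if one wants $C^1$; $f$ may first be taken smooth, the general case following by mollification and Stieltjes calculus) and $\chi$ a $y$-cutoff increasing to $\ind_{(u_0,u_1)}$. As $g_\epsilon(0,0)=0$ for $\epsilon<f(0)$, Dynkin's formula for $(Y,Z)$ stopped at $S$ (localised by $S\wedge\inf\{t:Y_t+Z_t>R\}$, $R\to\infty$, if $\e[S]=\infty$, and with a possible killing term, which is $O(\epsilon)$, absorbed) together with the identification of the killed occupation measure with $v$ on $D$ gives
\[
\e[g_\epsilon(Y_S,Z_S)]=\int_D\big(d_Y\partial_y g_\epsilon+d_Z\partial_z g_\epsilon\big)(y,z)\,v(y,z)\,dy\,dz+\int_D\big(\mathcal L^{(j)}g_\epsilon\big)(y,z)\,v(y,z)\,dy\,dz,
\]
where $\mathcal L^{(j)}$ is the jump part of the generator. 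As $\epsilon\to0$, the left side tends to $\p(Z_S=f(Y_S),\,u_0<Y_S<u_1)$: by Part 1 the argument of $\beta_\epsilon$ is strictly positive on the jump event and $0$ on the creeping event. Using only $z\le f(y)$ on $D$ — so that $\partial_z g_\epsilon$ and the $\beta_\epsilon'$-part of $\partial_y g_\epsilon$ reduce there to $\tfrac1\epsilon\ind_{\{f(y)-\epsilon<z<f(y)\}}$, the $\chi'$-part being $O(\epsilon)$ — and the continuity of $v$, the first integral tends to
\[
d_Z\int_{u_0}^{u_1}v(u,f(u))\,du-d_Y\int_{u_0}^{u_1}v(u,f(u))\,df(u).
\]
So the content of Part 2 reduces to showing $\int_D(\mathcal L^{(j)}g_\epsilon)(y,z)\,v(y,z)\,dy\,dz\to0$. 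Part 3 is the same scheme for the space-time process $(t,Y_t,Z_t)$, whose killed occupation measure has density $p_t(y,z)$ on $\{(t,y,z):(y,z)\in D\}$ by the continuity hypothesis on $p_t$; with test functions $\kappa(t)g_\epsilon(y,z)$, $\kappa\uparrow\ind_{(t_0,t_1)}$, the extra $\partial_t$-term again carries a factor $\beta_\epsilon(z-f(y))$ and so vanishes on $D$ in the limit, yielding (\ref{6366}); (\ref{7229}) is the case $t_0=0,\ t_1=\infty$.

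\emph{The main obstacle.} The delicate step, and the one where continuity of $v$ is really used, is $\int_D(\mathcal L^{(j)}g_\epsilon)\,v\,dy\,dz\to0$: $g_\epsilon$ concentrates mass of order $\epsilon$ on the null curve $\mathcal G$, and one must show the net jump flux it produces is negligible. I would write $g_\epsilon=\psi_\epsilon-\widetilde\psi_\epsilon$ as a difference of two ramp functions across $\mathcal G$ which, when $\chi=\ind_{(u_0,\infty)}$, are non-decreasing in $(y,z)$, so that $\mathcal L^{(j)}\psi_\epsilon,\mathcal L^{(j)}\widetilde\psi_\epsilon\ge0$; monotone convergence (for $\widetilde\psi_\epsilon$, which vanishes on $D$) and Fatou (for $\psi_\epsilon$) give one inequality for the creeping probability, the reverse coming from an estimate based on $\int(1\wedge|x|)\,\nu(dx)<\infty$ and the $O(\epsilon)$ concentration of $g_\epsilon$ near $\mathcal G$; I expect this estimate to be the real work. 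Once (\ref{7229}) is proved for open slabs, the closed and half-open cases follow since, by that very formula, $Y_S$ carries no atom on the creeping event.
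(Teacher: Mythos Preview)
Your generator/Dynkin approach is genuinely different from the paper's, which never touches the infinitesimal generator. Instead the paper discretises the graph by the step function on the subdivision $u_k^n=u_0+\tfrac{k}{n}(u_1-u_0)$ and reduces the creeping event on each piece to one-dimensional creeping: $Z$ through the horizontal level $f(u_k^n)$ and $Y$ through the vertical level $u_{k+1}^n$. These are computed via the elementary identity $\p(Z_{T_z^Z}=z,\,Y_{T_z^Z}\in dy)\,dz=d_Z\,v(y,z)\,dy\,dz$ and its $Y$-analogue (their Lemma~5.2). The lower bound comes from $\limsup_n B_n\subset\{\text{creeping}\}$ and Fatou; the upper bound from covering the creeping event on $[u_k^n,u_{k+1}^n)$ by the two one-dimensional creeping events plus the event that $(Y,Z)$ jumps out of the box $[u_k^n,u_{k+1}^n]\times[f(u_{k+1}^n),f(u_k^n)]$ without first creeping through either side, whose probability is $o(1/n)$ uniformly in $k$ by a short overshoot estimate (their Lemma~5.3). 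Continuity of $v$ enters only to pass Riemann--Stieltjes sums to integrals, and $f$ is never differentiated. Your Part~1 is essentially the paper's Lemma~5.1.

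Your route could probably be completed, but two steps are not yet under control. The jump-generator estimate you flag can likely be closed along the lines you suggest (the key being $\epsilon\,\nu(\{|x|>\epsilon\})\to0$, which follows from $\int(1\wedge|x|)\,\nu(dx)<\infty$ by dominated convergence), though it needs careful bookkeeping near the axes where $v$ may blow up. The more serious gap is the regularity of $f$: your drift computation uses $\partial_y g_\epsilon$, hence $f'$, and you propose to mollify $f$ and pass to the limit afterwards. But that requires continuity of the creeping probability $\p(Z_S=f(Y_S),\,u_0<Y_S<u_1)$ under uniform perturbation of $f$, which is not at all obvious --- creeping is a null event in path space and such probabilities are typically not robust. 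The paper's discretisation bypasses this entirely: the curve is approximated by horizontal and vertical segments, the one-dimensional creeping identities require no smoothness, and the limit is taken in the Riemann sums of the \emph{answer}, never in $f$.
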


\noindent Let us mention that an intermediary result between those of part 2.~and part 3.~of Theorem 
\ref{9049} can be stated as follows: 
\begin{itemize}
\item[$2'.$] Assume that for some $0\le t_0<t_1\le\infty$, the measure $\int_{t_0}^{t_1}p_t(dy,dz)\,dt$ is 
absolutely continuous on $(0,\infty)^2$, with density $\bar{v}(y,z)$ and that $\bar{v}$ is continuous on 
$(0,\infty)^2$. Then for all $u_0,u_1$ such that $0\le u_0<u_1\le \infty$,
\begin{eqnarray*}
&&\p(Z_{S}=f(Y_S),\,u_0<Y_{S}<u_1,\,t_0<S<t_1)=\nonumber\\
&&\qquad\qquad d_Z\int_{u_0}^{u_1}\bar{v}(u,f(u)) \, du-d_Y\int_{u_0}^{u_1} 
\bar{v}(u,f(u))\, df(u)\,.
\end{eqnarray*}
\end{itemize}
The proof of $2'.$~is quite similar to this of part 3.~of Theorem \ref{9049}, see subsections
\ref{8590} and \ref{9791}, so it is omitted.\\ 

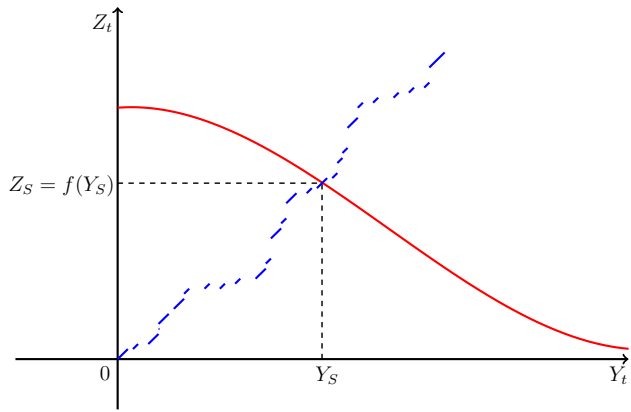
\begin{figure}[!h]
\centering
\resizebox{9.5cm}{6cm}{
 \begin{tikzpicture}[xscale=1,yscale=1]
			\newcommand{\xmin}{-4}				
			\newcommand{\xmax}{10}
			\newcommand{\ymin}{-2}
			\newcommand{\ymax}{7}
			\clip (\xmin,\ymin) rectangle (\xmax,\ymax);
			\draw [very thick, ->](-2,0)--(\xmax,0);
			\draw [very thick, ->](0,-1)--(0,\ymax);
			\draw (0,0) node [below left] {$0$};
			\draw(4.1,0)node[below]{$Y_S$};
			\draw(9.8,0)node[below]{$Y_t$};
			\draw(-0.3,7)node[below]{$Z_t$};
			\draw(-1.1,3.8)node[below]{$Z_S=f(Y_S)$};
			\draw[dashed, thick](4,0)--(4,3.5);
			\draw[dashed, thick](0,3.5)--(4,3.5);
			\draw[red, domain=0:\xmax, samples=200,very thick] plot (\x, {0.0095*\x*\x*\x-0.151*\x*\x+0.08*\x+5}); 
			\draw[blue, very thick] plot file {coord_biv.txt};
	\end{tikzpicture}}
	\caption{A sample path of $(Y,Z)$ creeping through the graph of $f$}
	\label{sub_biv}
\end{figure}

Let us now focus on two direct applications of Theorem \ref{9049}. The first one gives the probability 
for the Euclidian norm $\|(Y,Z)\|=\sqrt{Y^2+Z^2}$ of a bivariate subordinator to creep through a fixed level, 
that is $\p(\|(Y,Z)_{U_a}\|=a)$, for $a>0$ and $U_a=\inf\{t:\|(Y,Z)_t\|>a\}$.
This is simply obtained from Theorem \ref{9049} by choosing $f(y)=\sqrt{a^2-y^2}$, so that $S=U_a$.
\begin{corollary}\label{2524}
Let $(Y,Z)$ be a bivariate subordinator. Then with the same notation and assumptions than in 
Theorem $\ref{9049}$,
\begin{equation}\label{2611}
\p(\|(Y,Z)\|_{U_a}=a)=d_Z\int_{0}^{a}v(u,\sqrt{a^2-u^2}) \, du+d_Y\int_{0}^{a}
\frac{u v(u,\sqrt{a^2-u^2})}{\sqrt{a^2-u^2}}\, du\,.
\end{equation}
\end{corollary}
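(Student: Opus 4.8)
The plan is to derive Corollary~\ref{2524} as a direct specialization of part~2 of Theorem~\ref{9049}, applied to the function $f(y)=\sqrt{a^2-y^2}$ on $(0,a)$. First I would note that this $f$ is continuous and strictly decreasing on $(0,a)$, with $f(0)=\lim_{y\to0+}f(y)=a$ and $f(a)=\lim_{y\to a-}f(y)=0$. Geometrically, the graph of $f$ is exactly the quarter-circle of radius $a$ in the open first quadrant, so for a path of $(Y,Z)$ starting at the origin the event $\{Z_t>f(Y_t)\}$ coincides with $\{Y_t^2+Z_t^2>a^2\}=\{\|(Y,Z)_t\|>a\}$; hence the first passage stopping times coincide, $S=U_a$, and likewise $\{Z_S=f(Y_S)\}=\{\|(Y,Z)_{U_a}\|=a\}$. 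Since $Y_S\in(0,a)$ on the creeping event (the endpoints $Y_S=0$ and $Y_S=a$ would force $\Delta(Y,Z)_S\neq0$, which has probability zero by part~1 of the theorem), I would take $u_0=0$ and $u_1=\infty$ in~\eqref{7229}; the integrals then effectively run over $(0,a)$ because $v(u,f(u))$ is only defined, and the formula only contributes, for $u$ in that range.

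Next I would carry out the change of variables in the second integral of~\eqref{7229}. On $(0,a)$ the function $f(u)=\sqrt{a^2-u^2}$ is differentiable with $f'(u)=-u/\sqrt{a^2-u^2}$, so $df(u)=f'(u)\,du=-\dfrac{u}{\sqrt{a^2-u^2}}\,du$. Substituting into~\eqref{7229} gives
\[
\p(\|(Y,Z)\|_{U_a}=a)
= d_Z\int_{0}^{a} v(u,\sqrt{a^2-u^2})\,du
- d_Y\int_{0}^{a} v(u,\sqrt{a^2-u^2})\cdot\Bigl(-\frac{u}{\sqrt{a^2-u^2}}\Bigr)\,du,
\]
and the two minus signs in the second term combine to a plus, yielding exactly~\eqref{2611}. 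I would also remark that all hypotheses of Theorem~\ref{9049} transfer verbatim: the absolute continuity and continuity of the renewal density $v$ on $(0,\infty)^2$ are assumed as part of ``the same notation and assumptions,'' and $v(u,\sqrt{a^2-u^2})$ is well-defined and the integrals make sense since $(u,\sqrt{a^2-u^2})\in(0,\infty)^2$ for $u\in(0,a)$.

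The only mild technical point — and the closest thing to an obstacle — is justifying that the Riemann--Stieltjes integral $\int_{u_0}^{u_1} v(u,f(u))\,df(u)$ appearing in~\eqref{7229} genuinely reduces to the ordinary Lebesgue integral $\int_0^a v(u,\sqrt{a^2-u^2})f'(u)\,du$ near the singular endpoint $u=a$, where $f'$ blows up like $(a-u)^{-1/2}$. This is not a real difficulty: $f$ is absolutely continuous on every $[0,a-\varepsilon]$ and also on $[0,a]$ (it is monotone and bounded), $u\mapsto v(u,f(u))$ is continuous hence bounded on compact subsets of $(0,a)$, and the integrand $v(u,\sqrt{a^2-u^2})\,u/\sqrt{a^2-u^2}$ has an integrable singularity at $u=a$ whenever $v$ stays bounded near the axis; one can in any case truncate to $[\delta,a-\varepsilon]$, apply the standard change-of-variables formula for Stieltjes integrals against an absolutely continuous integrator, and pass to the limit by monotone convergence since all terms are nonnegative ($d_Y,d_Z\ge0$ and $v\ge0$). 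With that dispatched, the corollary follows immediately.
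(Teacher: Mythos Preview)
Your proposal is correct and follows exactly the same approach as the paper: the paper's entire argument is the single sentence ``This is simply obtained from Theorem~\ref{9049} by choosing $f(y)=\sqrt{a^2-y^2}$, so that $S=U_a$.'' You have filled in the details (the identification $S=U_a$, the computation $df(u)=-u(a^2-u^2)^{-1/2}\,du$, and the endpoint/Stieltjes issues) that the paper leaves implicit, but the route is identical.
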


The second direct application concerns the probability for a (one dimensional) subordinator to creep through 
the graph of a continuous non increasing function. Let $X$ be a subordinator whose semigroup has continuous 
densities $(t,x)\mapsto p_t(x)$ on $(0,\infty)^2$ and let us apply identity 
(\ref{7229}) to the bivariate subordinator $(Y_t,Z_t)=(t,X_t)$. 
The renewal measure of $(Y,Z)$ is then $v(t,x)\,dt\,dx=p_t(x)\,dt\,dx$. Hence it satisfies the assumption of
part 2.~of Theorem \ref{9049}. Moreover, since $Y_t=t$, one has $Y_S=\inf \lbrace t: X_t>f(t) \rbrace$. This 
leads to the following corollary. 
\begin{corollary}\label{2526}
Assume that $X$ is a subordinator whose 
semigroup $\p(X_t\in dx)$ is absolutely continuous on $(0,\infty)$ for each $t>0$, with continuous densities 
$(t,x)\mapsto p_t(x)$ on $(0,\infty)^2$. Set $T_f=\inf \lbrace t>0: X_t>f(t) \rbrace$, where $f$ is as in the 
statement of Theorem $\ref{9049}$. Then for all $t_0$ and $t_1$ such that $0 \le t_0<t_1\le \infty$,
\begin{eqnarray}
\mathbb{P}(X_{T_f}=f({T_f}),t_0<{T_f}<t_1)
= d_X\int_{t_0}^{t_1}p_u(f(u))\,du-\int_{t_0}^{t_1}p_u(f(u)) \, df(u)\,.\label{7372}
\end{eqnarray}
\end{corollary}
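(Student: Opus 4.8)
The plan is to deduce Corollary~\ref{2526} directly from part~2 of Theorem~\ref{9049}, applied to the bivariate subordinator $(Y_t,Z_t):=(t,X_t)$, $t\ge0$. First I would check that $(t,X_t)$ genuinely is a bivariate subordinator in the sense of Subsection~\ref{3025}: it starts from the origin, it is a two-dimensional L\'evy process — its first coordinate being a pure deterministic drift and its second coordinate the subordinator $X$, carrying the same killing rate, if any — and both its coordinates are non decreasing. Its drift coefficients are $d_Y=1$, since $t\mapsto t$ has Laplace exponent $\lambda\mapsto\lambda$, and $d_Z=d_X$.

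Next I would identify the renewal measure $v(dy,dz)=\int_0^\infty p_t(dy,dz)\,dt$ of $(Y,Z)$, where now $p_t(dy,dz)=\delta_t(dy)\,\p(X_t\in dz)$. For any nonnegative Borel function $g$ on $(0,\infty)^2$, Tonelli's theorem gives
\[
\int g(y,z)\,v(dy,dz)=\int_0^\infty\!\!\int_{(0,\infty)}g(t,z)\,\p(X_t\in dz)\,dt=\int_0^\infty\!\!\int_0^\infty g(t,z)\,p_t(z)\,dz\,dt\,,
\]
using that $\p(X_t\in dz)$ has density $p_t(z)$ on $(0,\infty)$. Hence on $(0,\infty)^2$ the measure $v$ is absolutely continuous, with density $v(y,z)=p_y(z)$, which is continuous by hypothesis; it is moreover locally finite there, since $v\big((0,a)\times(0,b)\big)\le a$. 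Thus the assumptions of part~2 of Theorem~\ref{9049} are met. Note that part~3 of that theorem cannot be invoked, since for each fixed $t$ the law $p_t(dy,dz)$ is singular; it is precisely the integration in $t$ that regularizes the Dirac mass $\delta_t(dy)$.

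It then remains to rewrite identity~(\ref{7229}) in the present notation. Because $t\mapsto Y_t=t$ is continuous and strictly increasing, $S=\inf\{t:Z_t>f(Y_t)\}=\inf\{t>0:X_t>f(t)\}=T_f$ and $Y_S=S=T_f$, so that the event $\{Z_S=f(Y_S),\,u_0<Y_S<u_1\}$ coincides with $\{X_{T_f}=f(T_f),\,u_0<T_f<u_1\}$ (part~1 of Theorem~\ref{9049} moreover shows that $T_f<\infty$ a.s.). Substituting $d_Z=d_X$, $d_Y=1$ and $v(u,f(u))=p_u(f(u))$ into~(\ref{7229}), and relabelling $(u_0,u_1)$ as $(t_0,t_1)$, yields exactly~(\ref{7372}).

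I do not expect any real obstacle here: every step is a routine verification. The one place that deserves a moment's attention is the computation of the renewal measure, together with the observation that, although the two-dimensional semigroup is singular at each fixed time, its time-integral over $(0,\infty)$ is absolutely continuous with the continuous density $p_y(z)$ — so that the statement indeed falls under part~2, and not part~3, of Theorem~\ref{9049}.
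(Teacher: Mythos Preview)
Your proposal is correct and follows essentially the same route as the paper: apply part~2 of Theorem~\ref{9049} to the bivariate subordinator $(Y_t,Z_t)=(t,X_t)$, identify its renewal density as $v(y,z)=p_y(z)$, note that $Y_S=S=T_f$, and substitute $d_Y=1$, $d_Z=d_X$ into~(\ref{7229}). Your additional remarks (the explicit verification that $(t,X_t)$ is a bivariate subordinator, and the observation that part~3 is inapplicable because the fixed-time law is singular) are helpful elaborations but do not change the argument.
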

\noindent Let us mention that the creeping probability for subordinators with no drift has been determined in 
\cite{chi} when $f$ is differentiable. We will actually extend (\ref{7372}) to the creeping probability of any 
L\'evy process at its supremum in the next section.\\

\noindent Note that if $f$ is decreasing, then (\ref{7229}) can also be written as
\[\p(Z_{S}=f(Y_S),\,u_0<Y_{S}<u_1)=
d_Y\int_{u_0}^{u_1}v(f^{-1}(u),u) \, du-d_Z\int_{u_0}^{u_1}v(f^{-1}(u),u)\, df^{-1}(u)\,,\] 
and this applies to (\ref{2611}) and (\ref{7372}).\\ 

The question of finding necessary and sufficient conditions for a bivariate subordinator $(Y,Z)$ to creep through
a given continuous, non increasing function is still open. In particular, nothing is known when the renewal measure 
$v(dy,dz)$ is not absolutely continuous. Looking at (\ref{7229}) in Theorem \ref{9049}, one is tempted to think that 
when $d_Y>0$ and $d_Z>0$, the probability $\p(Z_{S}=f(Y_S))$ is always positive and when $d_Y=0$ and $d_Z=0$, this 
probability is always equal to 0. Let us also mention that the absolute continuity condition of Theorem \ref{9049} 
is discussed in Subsection \ref{3331}.\\

It seems hardly possible to obtain an expression of the creeping probability when $f$ is a general continuous 
function. When $f$ is continuous and non decreasing we can still obtain an upper bound for the creeping probability
as the following proposition shows. 

\begin{proposition}\label{8983}  
Let $(Y,Z)$ be a bivariate subordinator which satisfies the same assumptions as in part $2$.~of Theorem $\ref{9049}$.   
Keep the same notation as in this theorem. Let $f:(0,\infty)\rightarrow(0,\infty)$ be a continuous non decreasing 
function such that $\lim_{t\rightarrow0+}f(t)>0$. Set  $f(0)=\lim_{t\rightarrow0+}f(t)$, 
$f(\infty)=\lim_{t\rightarrow\infty}f(t)$ and define $S=\inf\{t:Z_t>f(Y_t)\}$.
\begin{itemize}
\item[$1.$] Then $\p(S>0)=1$ and the process $(Y,Z)$ can neither jump on the graph $\{(u,f(u)):u>0\}$ nor jump from 
this graph, that is 
\[\p(Z_{S-}=f(Y_{S-}),\Delta(Y,Z)_{S}\neq0,Y_{S}<\infty)=\p(Z_{S}=f(Y_{S}),\Delta(Y,Z)_{S}\neq0,Y_{S}<\infty)=0\,.\]
\item[$2.$] Let $u_0,u_1$ be such that $0\le u_0<u_1\le \infty$. Then the creeping probability of $(Y,Z)$ 
through the graph of $f$ is bounded from above as follows, 
\begin{equation}\label{6633}
\p(Z_{S}=f(Y_S),\,u_0<Y_{S}<u_1)\le d_Z\int_{u_0}^{u_1}v(u,f(u))\,du\,.
\end{equation}
\end{itemize}
\end{proposition}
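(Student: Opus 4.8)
The plan is to follow the proof of Theorem~\ref{9049}, indicating where the monotonicity of $f$ changes the sign of the contribution of $d_Y$.

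\textbf{Part 1.} That $\p(S>0)=1$ follows from right-continuity of the paths and $f(0)>0$: since $(Y_0,Z_0)=(0,0)$ and $t\mapsto f(Y_t)-Z_t$ is right-continuous with value $f(0)>0$ at $t=0$, one has $Z_t<f(Y_t)$ for all small $t>0$ almost surely. For the two ``no jump'' identities I would use the compensation formula for the Poisson point process of jumps of $(Y,Z)$, with L\'evy measure $\mu$, exactly as in part~1 of Theorem~\ref{9049}. On $\{Y_S<\infty\}$ the left limit $(Y_{S-},Z_{S-})$ lies weakly below the graph, because $Z_u\le f(Y_u)$ for $u<S$ and $f$ is continuous, so the event that $(Y,Z)$ jumps onto the graph is contained in $\{\exists\,t:\ Z_{t-}\le f(Y_{t-}),\ Z_t=f(Y_t),\ \Delta(Y,Z)_t\neq0\}$. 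Bounding the probability of the latter by the compensated sum, and using that $\int_0^\infty\p((Y_t,Z_t)\in dy\,dz,\ Z_t\le f(Y_t))\,dt$ is dominated by the renewal measure $v(dy,dz)$, which is absolutely continuous on $(0,\infty)^2$, one reduces to
\[
\int\!\!\!\int_{(0,\infty)^2}v(y,z)\,\ind_{\{z\le f(y)\}}\,dy\,dz\int\mu(dy',dz')\,\ind_{\{z+z'=f(y+y')\}} ,
\]
where for fixed $y$ and fixed jump $(y',z')$ the constraint forces $z$ to equal $f(y+y')-z'$, a single value, so the inner Lebesgue integral in $z$ vanishes. The event that $(Y,Z)$ jumps from the graph is bounded in the same way by $\mu([0,\infty)^2\setminus\{0\})\cdot\e\big[\mathrm{Leb}\{t:\ Z_t=f(Y_t)\}\big]$, and $\e[\mathrm{Leb}\{t:Z_t=f(Y_t)\}]=\int\!\!\int v(dy,dz)\,\ind_{\{z=f(y)\}}=0$ on $(0,\infty)^2$; the contributions of the axes $\{y=0\}\cup\{z=0\}$, non-trivial only when $d_Y=d_Z=0$, are ruled out by the absolute continuity of $v$, which then prevents $\mu$ from charging the graph.

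\textbf{Part 2.} Assume first $d_Z>0$. The reason only $d_Z$ survives in (\ref{6633}) is geometric: when $(Y,Z)$ creeps through the graph of a non-decreasing $f$, the crossing at time $S$ is produced by the drift vector $(d_Y,d_Z)$, and the horizontal component $d_Y$ moves the process \emph{away} from the graph (since $f$ is non-decreasing, the region $\{z\le f(y)\}$ opens up to the right), so only the vertical component $d_Z$ helps the crossing; the term $-d_Y\int v\,df$ of Theorem~\ref{9049} would now be $\le 0$ and is simply discarded. Concretely, on the creeping event $Z_u\le f(Y_u)\le f(Y_S)$ for every $u<S$, hence $S=\inf\{t:Z_t>f(Y_S)\}$ and $Z_S=f(Y_S)$: the subordinator $Z$ creeps upward through the random level $f(Y_S)$ at time $S=\tau^Z_{f(Y_S)}$, with $Y_S=Y_{\tau^Z_{f(Y_S)}}$. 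I would quantify this with an occupation-time estimate in a thin strip below the graph. For $\epsilon>0$ set $A_\epsilon=\int_0^{S}\ind_{\{f(Y_t)-\epsilon<Z_t\le f(Y_t),\ u_0<Y_t<u_1\}}\,dt$. On one hand $\e[A_\epsilon]\le\int_{u_0}^{u_1}\int_{f(u)-\epsilon}^{f(u)}v(u,z)\,dz\,du$, so by continuity of $v$ and dominated convergence ($u_0=0$ being reached by a monotone limit) $\limsup_{\epsilon\to0}\epsilon^{-1}\e[A_\epsilon]\le\int_{u_0}^{u_1}v(u,f(u))\,du$. On the other hand, on the creeping event with $u_0<Y_S<u_1$ the process stays in this strip throughout an interval $(S-\delta_\epsilon,S)$, and since the last vertical distance $\epsilon$ to the graph is covered by the drift working at rate $d_Z$, one should have $\delta_\epsilon\ge\epsilon\,d_Z^{-1}(1+o(1))$, i.e. $\liminf_{\epsilon\to0}\epsilon^{-1}A_\epsilon\ge d_Z^{-1}$ there. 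Combining the two bounds by Fatou's lemma yields $\int_{u_0}^{u_1}v(u,f(u))\,du\ge d_Z^{-1}\,\p(Z_S=f(Y_S),\,u_0<Y_S<u_1)$, which is (\ref{6633}). When $d_Z=0$, (\ref{6633}) asserts that the creeping probability vanishes, which I would obtain separately from part~1 together with the same geometric fact (a non-decreasing graph cannot be crossed continuously without an upward drift); alternatively it is the degenerate case of the estimate above.

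\textbf{The main obstacle} is the lower bound $\liminf_\epsilon\epsilon^{-1}A_\epsilon\ge d_Z^{-1}$ \emph{with the sharp constant} $d_Z^{-1}$: a priori $Z$ might make jumps of size comparable with $\epsilon$ arbitrarily close to $S$, traversing the strip faster than the pure drift. One has to use that on the creeping event $Z_{S-}=f(Y_{S-})$ and $\Delta(Y,Z)_S=0$, which forces the last stretch of $Z$ towards its creeping level $f(Y_S)$ to be carried asymptotically by the drift; this is precisely the delicate point of the proof of Theorem~\ref{9049}, reused here. An alternative is to prove (\ref{6633}) first for non-decreasing step functions — where a horizontal segment of the staircase can be crossed only by a genuine creeping of $Z$ through a fixed level, to which Theorem~\ref{9049} applied to a constant function gives exactly the term $d_Z\,v(\cdot,f(\cdot))$, while a vertical (upward) segment is never crossed — and then to pass to the limit $f_n\to f$, the difficulty moving to the convergence of the creeping events.
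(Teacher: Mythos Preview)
Your Part~1 is essentially the paper's argument: both reduce to the compensation formula for the Poisson point process of jumps (Lemma~\ref{5698}) together with the fact that the renewal measure assigns zero mass to the graph of $f$. Two small points: the L\'evy measure may be infinite, so one truncates to jumps of norm $>\varepsilon$ and lets $\varepsilon\downarrow0$ (as in Lemma~\ref{5698}); and your remarks about ``contributions of the axes'' and about $v$ ``preventing $\mu$ from charging the graph'' are confused and unnecessary --- absolute continuity of $v$ is used only to make the compensated integral vanish, not to constrain $\mu$.

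For Part~2 your approach is genuinely different from the paper's, and the gap you flag is real and is not the one handled in Theorem~\ref{9049}. Your heuristic for $\delta_\epsilon\ge\epsilon\,d_Z^{-1}(1+o(1))$ is in the wrong direction: the drift $d_Z$ is a \emph{lower} bound on the speed at which $Z$ traverses the strip, so it yields $\delta_\epsilon\le\epsilon/d_Z$, not $\ge$. Getting the sharp constant $d_Z^{-1}$ out of an occupation-time estimate would require showing that, at the creeping time $S$, the jump part of $Z$ over $(S-\delta,S)$ is $o(\delta)$ --- a quantitative statement about subordinators at creeping times that you neither prove nor cite. Your assertion that ``this is precisely the delicate point of the proof of Theorem~\ref{9049}'' is inaccurate: that proof never uses an occupation-time argument and does not face this obstacle; its delicate step is controlling the undershoot events $B_{n,k}^{(3)},B_{n,k}^{(4)}$ via Lemma~\ref{2833}, which is a different matter. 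The case $d_Z=0$ is also left hanging in your scheme.

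The paper instead adapts the discretisation of Subsection~\ref{9791}. With the subdivision $(u_k^n)$ of $[u_0,u_1]$ it sets
\[
C_{n,k}^{(1)}=\bigl\{Z_{T^Z_{f(u_{k+1}^n)}}=f(u_{k+1}^n),\ u_k^n\le Y_{T^Z_{f(u_{k+1}^n)}}\le u_{k+1}^n\bigr\},
\]
\[
C_{n,k}^{(2)}=\bigl\{f(u_k^n)\le Z_{T^Z_{f(u_{k+1}^n)}-}<f(u_{k+1}^n),\ u_k^n\le Y_{T^Z_{f(u_{k+1}^n)}-}<u_{k+1}^n\bigr\},
\]
and proves $\{Z_S=f(Y_S),\,u_k^n\le Y_S<u_{k+1}^n\}\subset C_{n,k}^{(1)}\cup C_{n,k}^{(2)}$ for $n$ large. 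The geometry is this: just after the creeping time the path stays above the graph, and since $f$ is non-decreasing the only way to leave the box $[u_k^n,u_{k+1}^n]\times[f(u_k^n),f(u_{k+1}^n)]$ while staying above the graph is through its top edge, so $Z$ must either creep through the fixed level $f(u_{k+1}^n)$ (event $C_{n,k}^{(1)}$) or jump over it from inside the box (event $C_{n,k}^{(2)}$). No $Y$-crossing event enters --- this is why the $d_Y$ term is absent, not because a negative term is ``discarded''. Then $\p(C_{n,k}^{(1)})=d_Z\int_{u_k^n}^{u_{k+1}^n}v(u,f(u_{k+1}^n))\,du$ by Lemma~\ref{8333}, while $\sum_k\p(C_{n,k}^{(2)})\to0$ by the analogue of Lemma~\ref{2833}; summing and letting $n\to\infty$ gives~(\ref{6633}), and the case $d_Z=0$ falls out automatically. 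Your step-function alternative points in this direction but is left undeveloped; the paper's route is shorter and avoids both your sharp-constant obstacle and the limit-of-creeping-events difficulty you anticipate.
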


\noindent Proposition \ref{8983} implies that when $d_Z=0$, the process $(Y,Z)$ never creeps through the graph of 
any continuous non decreasing function. This result and its consequence in Proposition \ref{5838} will be useful for
our application to Ornstein-Uhlenbeck processes in Section \ref{OU}.

\subsection{Creeping of real L\'evy processes at their supremum}\label{6525}
Let us now consider a non killed real L\'evy process $X$. We will always assume in the sequel that $X_0=0$, a.s.
Since our interest lies in creeping upward of $X$, it is natural to assume throughout this paper that $-X$ is not a 
subordinator. Moreover the case of subordinators has already been dealt with in Corollary \ref{2526}. Therefore in this 
subsection, we will assume that $|X|$ is not a subordinator. We write $\overline{X}$ for the supremum process, that is,
\[\overline{X}_{t}=\sup_{s\le t}X_{s}\,,\;\;\;t\ge0\,.\]
Then let us first recall a few basic definitions. It is well known that the reflected process $\overline{X}-X$ 
is strongly Markovian under $\p$. 
Let $L$ be the local time at 0 of $\overline{X}-X$ (a proper definition is given in Subsection \ref{5399}) 
and denote by $(\tau,H)$ the upward ladder process of $X$, that is the bivariate L\'evy process whose 
coordinates are the following (possibly killed) subordinators:
\[\tau_t=\inf\{s:L_s>t\}\;\;\;\mbox{and}\;\;\;H_t=X_{\tau_t}\,,\;\;t\ge0\,,\]
where $\tau_t=H_t=\infty$, for $t\ge L_\infty$. The process $\tau$ (resp. $H$) is called the upward
ladder time (resp.~height) process of $X$. We will denote by $d_\tau$ and $d_H$ their respective drift 
coefficients. Let us introduce the renewal measure $U$ on $[0,\infty)^2$ of the ladder process 
$(\tau,H)$, that is,
\[U(dt,dh)=\int_0^\infty\p(\tau_u\in dt,\,H_u\in dh)\,du\,.\]
We specify that when the measure $U(dt,dh)$ is absolutely continuous on $(0,\infty)^2$, its density will 
be denoted by $q_t^*(h)$. This notation may seem unnatural but it comes from the fact that this density 
corresponds to the entrance law of the reflected excursions, see Subsection \ref{5399}, which is thus noted 
in older references, see \cite{ch} and \cite{cm} for instance.\\  

Throughout this paper, for any function $f:(0,\infty)\rightarrow\mathbb{R}$, we set 
\[T_f:=\inf\{t>0:X_t>f(t)\}\,.\]
When a L\'evy process $X$ satisfies $\p(X_{T_f}=\overline{X}_{T_f}=f({T_f}),\,t_0<{T_f}<t_1)>0$, we will 
say that $X$ creeps at its supremum through the function $f$ over the interval $(t_0,t_1)$.
Omission of $(t_0,t_1)$ will simply mean that X creeps over the whole half line $(0,\infty)$.
The following result gives an expression of the probability of this event when $f$ is a continuous, non 
increasing function.

\begin{theorem}\label{3636}
Let $X$ be a real L\'evy process and $f:(0,\infty)\rightarrow(0,\infty)$ be a continuous, non increasing 
function.
\begin{itemize}
\item[$1.$] Then $\p(T_f>0)=1$ and the process $X$ can neither jump on the graph $\{(u,f(u)):u>0\}$ 
nor jump from this graph, that is 
\[\p(X_{T_f-}=f(T_f)<X_{T_f},\,T_f<\infty)=\p(X_{T_f-}<X_{T_f}=f(T_f),\,T_f<\infty)=0\,.\]
\item[$2.$] Assume that the renewal measure $U(dt,dx)$ of the ladder process $(\tau,H)$ has a continuous 
density $(t,x)\mapsto q_t^*(x)$ on $(0,\infty)^2$. Then $X$ creeps at its supremum through $f$ if and only
if its ladder process $(\tau,H)$ creeps through the graph of $f$. Moreover, for all $t_0$ and $t_1$ such that 
$0 \le t_0<t_1\le \infty$, 
\begin{equation}\label{8225}
\p(X_{T_f}=\overline{X}_{T_f}=f({T_f}),\,t_0<{T_f}<t_1)=
d_H\int_{t_0}^{t_1}q^*_u(f(u)) \, du-d_\tau\int_{t_0}^{t_1}q^*_u(f(u))\, df(u)\,.
\end{equation}
\end{itemize}
\end{theorem}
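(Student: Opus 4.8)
The plan is to reduce Theorem \ref{3636} to Theorem \ref{9049} applied to the bivariate upward ladder process $(\tau,H)$, using the classical identity that relates first passage of $X$ at its supremum to first passage of the ladder process.

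\textbf{Step 1: The reduction to the ladder process.}
First I would recall the standard fact, already invoked in the Introduction for fixed levels, that at the first passage time of $X$ above a level the ladder height process catches the value $X$. Here, for a non increasing function $f$, I would establish the pathwise identity
\[
\{X_{T_f}=\overline X_{T_f}=f(T_f)\}=\{Z_S=f(Y_S)\},\qquad (Y,Z):=(\tau,H),\ S=\inf\{t:H_t>f(\tau_t)\},
\]
together with $T_f=\tau_S$ on this event (and more precisely $\overline X_{T_f}=H_S$, $T_f=\tau_S$). The key point is monotonicity of $f$: since $\tau$ is non decreasing and $H$ is non decreasing, the event that $X$ first exceeds $f$ \emph{while at its running supremum} occurs exactly when, along the ladder clock, $H$ first exceeds $f\circ\tau$. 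One must also rule out the two "jumping" scenarios (the process jumping onto the graph of $f$, or jumping off it), so that $\{T_f<\infty,\ X_{T_f}=\overline X_{T_f}\}$ up to a null set is carried by the continuous-crossing event of the ladder process; this is precisely part 1 of the statement and it should follow from part 1 of Theorem \ref{9049} applied to $(\tau,H)$ — noting that $f\circ\tau$ need not itself be continuous, but the relevant event is still governed by the excursion structure. I expect the main obstacle to be this step: carefully justifying the identification of the creeping events on a set of full probability, accounting for the instants at which $X$ is at its supremum but between two ladder epochs, i.e.\ controlling the behaviour during excursions of $\overline X-X$ away from $0$, and handling the possibility that $0$ is irregular for $(-\infty,0)$ (so $\tau$ has a positive drift $d_\tau$ and the ladder time is a genuine compound Poisson-type jump component).

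\textbf{Step 2: Matching the hypotheses of Theorem \ref{9049}.}
Next I would check that $(\tau,H)$ satisfies the assumptions of part 2 of Theorem \ref{9049}: its renewal measure $U(dt,dh)$ is assumed to be absolutely continuous on $(0,\infty)^2$ with continuous density $q_t^*(h)$, which is exactly the required hypothesis $v(y,z)=q_t^*(h)$ with the identification $(y,z)\leftrightarrow(t,h)$. The drifts of the two coordinates are $d_Y=d_\tau$ and $d_Z=d_H$. One subtlety is that $(\tau,H)$ may be killed (when $L_\infty<\infty$, i.e.\ when $X$ drifts to $-\infty$), whereas Theorem \ref{9049} is stated for a possibly killed bivariate subordinator as well — so this is covered, but I would remark explicitly that $S<\infty$ a.s.\ (from Theorem \ref{9049}.1) corresponds to $\{T_f<\infty\}$ having the right probability, and the event $\{S=\infty\}$, which has probability zero under the renewal-density hypothesis, matches $\{T_f=\infty\}$ on which $X$ never reaches $f$ at its supremum.

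\textbf{Step 3: Transferring the formula.}
Finally I would apply identity (\ref{7229}) of Theorem \ref{9049} to $(\tau,H)$ with $u_0=t_0$, $u_1=t_1$: since $Y_S=\tau_S=T_f$ on the creeping event, the left-hand side becomes $\p(X_{T_f}=\overline X_{T_f}=f(T_f),\ t_0<T_f<t_1)$, while the right-hand side becomes
\[
d_H\int_{t_0}^{t_1} v(u,f(u))\,du - d_\tau\int_{t_0}^{t_1} v(u,f(u))\,df(u)
= d_H\int_{t_0}^{t_1} q_u^*(f(u))\,du - d_\tau\int_{t_0}^{t_1} q_u^*(f(u))\,df(u),
\]
which is (\ref{8225}). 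The "if and only if" statement — that $X$ creeps at its supremum through $f$ precisely when $(\tau,H)$ creeps through the graph of $f$ — is then immediate from Step 1, since the two events coincide up to a null set. I would close by noting that, just as observed after Theorem \ref{9049}, a necessary condition for the right-hand side of (\ref{8225}) to be positive is that at least one of $d_H$, $d_\tau$ is strictly positive, i.e.\ that either $X$ creeps upward through fixed levels in the classical sense ($d_H>0$) or $0$ is irregular for $(-\infty,0)$ ($d_\tau>0$).
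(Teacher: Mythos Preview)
Your overall strategy for part~2 --- identify the event $\{X_{T_f}=\overline X_{T_f}=f(T_f)\}$ with $\{H_S=f(\tau_S)\}$ where $S=\inf\{t:H_t>f(\tau_t)\}$, check that $\tau_S=T_f$ on this event, and then apply Theorem~\ref{9049}.2 to the bivariate subordinator $(\tau,H)$ with $v=q^*$ --- is exactly the paper's approach, and Steps~2 and~3 are fine.

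There is, however, a genuine gap in your treatment of part~1. You propose to obtain it from part~1 of Theorem~\ref{9049} applied to $(\tau,H)$. Two problems arise. First, Theorem~\ref{9049}.1 carries the hypothesis that the renewal measure of the bivariate subordinator is absolutely continuous; part~1 of Theorem~\ref{3636} makes no such assumption on $U(dt,dh)$, so you would only be proving part~1 under the extra hypothesis of part~2. Second, and more seriously, part~1 of Theorem~\ref{9049} applied to $(\tau,H)$ tells you that $(\tau,H)$ cannot jump onto or off the graph at the ladder time $S$. But the assertion of Theorem~\ref{3636}.1 concerns $X$ at the time $T_f$, and in general $T_f$ is \emph{not} a ladder epoch: $X$ may well cross $f$ by a jump made strictly below its current supremum, during an excursion of $\overline X-X$ away from $0$. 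Such a crossing is invisible to $(\tau,H)$, so no amount of information about the ladder process can rule it out.

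The paper's fix is to bypass the ladder process entirely for part~1 and to apply Lemma~\ref{5698} directly to the $\mathbb{R}^2$-valued L\'evy process $\mathcal L_t=(X_t,t)$ with $G(u,v)=u-f(v)$; this yields immediately that $X$ never jumps onto or off the graph $\{(t,f(t)):t>0\}$, with no hypothesis on the renewal measure. The same Lemma~\ref{5698}, applied once more to $(\tau,H)$, is also what the paper uses inside the proof of part~2 to justify the event identification (ruling out the case $H_{S'-}=f(\tau_{S'-})$ with $\Delta(\tau,H)_{S'}\neq0$), which is the step you correctly flagged as the ``main obstacle''. So the missing ingredient in your outline, at both places, is Lemma~\ref{5698}.
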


The creeping probability of part 2.~of Theorem \ref{3636} reveals two types of creeping at the supremum,  
each of which corresponds to one of the two terms involved. When $d_H>0$, the process creeps upward 
either continuously (in absence of positive jumps) or through an accumulation of jumps of $H$ as in the 
case of a fixed barrier. In the case where $d_\tau>0$ (which is equivalent to the fact that 
0 is not regular for $(-\infty,0)$), another type of creeping occurs. Then the process creeps forward through 
an accumulation of jumps of $\tau$ provided $f$ decreases at the creeping time. Recall that the later jumps 
correspond to the lengths of the excursions of $\overline{X}-X$. This new kind of creeping happens as if $f$ 
were acting as a vertical barrier.\\ 

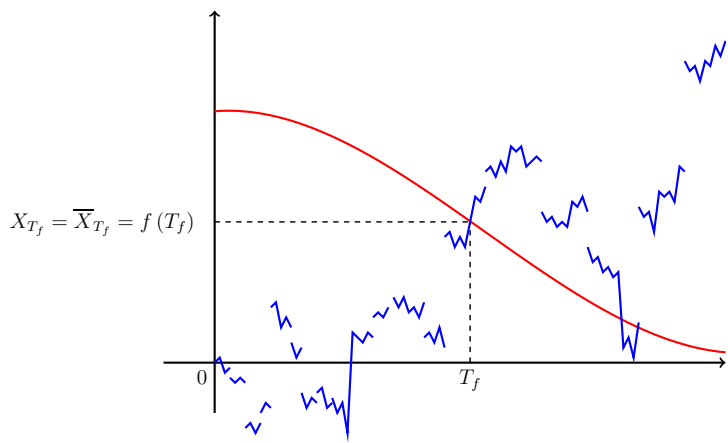
\begin{figure}[!h]
\centering
\resizebox{9.5cm}{6cm}{
\begin{tikzpicture}[xscale=1,yscale=1]
			\newcommand{\xmin}{-4}				
			\newcommand{\xmax}{10}
			\newcommand{\ymin}{-2}
			\newcommand{\ymax}{7}
			\clip (\xmin,\ymin) rectangle (\xmax,\ymax);
			\draw [very thick, ->](-1,0)--(\xmax,0);
			\draw [very thick, ->](0,-1)--(0,\ymax);
			\draw (0,0) node [below left] {$0$};
			\draw(5,0)node[below]{$T_f$};
			\draw[dashed, thick](0,2.8)--(5,2.8);
			\draw[dashed, thick](5,0)--(5,2.8);
			\draw(-2.2,3.2)node[below]{$X_{T_f}=\overline{X}_{T_f}=f\left({T_f}\right)$};
			
			\draw[red,domain=0:\xmax, samples=200,very thick] plot (\x, {0.0095*\x*\x*\x-0.151*\x*\x+0.08*\x+5}); 
			\draw[blue, very thick] plot file {coord_lev.txt};		
	\end{tikzpicture}}
	\caption{A sample path of $X$ creeping at its supremum through $f$}
	\label{X_creeps}
\end{figure}

We point out that the assumption of absolute continuity in part 2.~of the above result
is not as strong as it may appear. Indeed, we will see in Proposition \ref{1753}, 
that it is satisfied whenever the transition semigroup of $X$ admits densities, $x \mapsto p_t(x)$, $x \in \R$, 
which are bounded for all $t>0$, and for all $c\ge0$, the process $(|X_t-ct|,\,t\ge0)$ 
is not a subordinator. Moreover, from the same proposition, $q_t^*$ is positive on $(0,\infty)$, for all $t>0$, which 
ensures the positivity of the creeping probability (\ref{8225}) whenever $d_H>0$ or $d_\tau>0$ and $f$ is decreasing. 
On the other hand, when the assumption of Theorem \ref{3636} is not satisfied, we can still expect that the process 
creeps at its supremum through $f$ with positive probability, but finding necessary and sufficient conditions for this to 
hold is an open question which is closely related to the question raised by Theorem \ref{9049}, see the discussion after 
Corollary \ref{2526}.\\

Recall from $(3.3)$ in \cite{cm} that the function $x\mapsto u(x)=\int_0^\infty q_t^*(x)\,dt$
is the potential density of $H$ $($this is true by definition when $X$ is a subordinator since $q_t^*(x)=p_t(x))$. 
Hence by taking $f\equiv x$ in the statement of the above theorem, as well as in Corollary \ref{2526}, we recover 
the classical creeping result for L\'evy processes recalled in the introduction.\\

When $\p(X_{T_f}=f({T_f}),\,t_0<{T_f}<t_1)>0$ we will say that $X$ creeps through the function $f$
(over the interval $(t_0,t_1)$). 
Finding an expression for the probability $\p(X_{T_f}=f({T_f}),\,t_0<{T_f}<t_1)$ seems more complicated 
than for $\p(X_{T_f}=\overline{X}_{T_f}=f({T_f}),\,t_0<{T_f}<t_1)$. Indeed, in the second case, we only 
need to ensure that $X$ stays below its past supremum before time $T_f$, whereas in the first case the 
condition is that the whole path of $X$ stays below the curve of $f$ before time $T_f$. Therefore, the 
expression of this probability must strongly depend on the behaviour of whole paths of $X$ with respect to 
the curve of $f$ before time $T_f$ and it seems hardly possible to perform this computation for a 
general function. However the example below and Corollary \ref{9445} give conditions for $X$ to creep through
a function $f$ in some particular cases.\\ 

Let $X$ be the L\'evy process $X_t=S_t-at$, $t\ge0$, where $S$ is a subordinator without drift and $a>0$. 
Then 0 is not regular for $(0,\infty)$ and it is regular for $(-\infty,0)$, so that $d_H=d_\tau=0$ (see 
Subsection \ref{fluctuation}) and from Theorem \ref{3636}, the process does not creep at its supremum through 
any continuous non increasing function. However, it does creep through the function $f(t)=1-t$, since for any 
$a<1$, the subordinator with positive drift $S_t+(1-a)t$ creeps through the level 1. This simply means that 
the creeping time of $X$ through $f$ is a.s.~not a time at which $X$ reaches its past supremum. From this 
example, we can build a more general result regarding the probability to creep through a function (not 
necessarily at the supremum) of L\'evy processes with bounded variation.

\begin{corollary}\label{9445} Let $X$ be a L\'evy process with bounded variation and nonnegative drift.
Assume that for all $c\ge0$, the process $(|X_t-ct|,\,t\ge0)$ is not a subordinator and  
that for all $t>0$, the distribution $\p(X_t\in dx)$ is absolutely continuous on $\mathbb{R}$ 
with a bounded density. Let $f:(0,\infty)\rightarrow\mathbb{R}$ be a function such that for some 
$0\le t_0<t_1\le\infty$ and $a>0$, $f(t)+at$ is positive, continuous and non increasing over the interval 
$(t_0,t_1)$. Then $X$ creeps through $f$ over the interval $(t_0,t_1)$, that is 
\[\p(X_{T_f}=f({T_f}),\,t_0<{T_f}<t_1)>0\,.\]
\end{corollary}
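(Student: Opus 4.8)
The plan is to eliminate the drift problem by the deterministic shift $Y_t:=X_t+at$ and then invoke Theorem~\ref{3636}. Put $g(t):=f(t)+at$, so that $\{X_t>f(t)\}=\{Y_t>g(t)\}$ for every $t>0$; hence $T_f=\inf\{t>0:Y_t>g(t)\}$ and $\{X_{T_f}=f(T_f)\}=\{Y_{T_f}=g(T_f)\}$ on $\{T_f<\infty\}$. Since $X$ is of bounded variation with drift $d_X\ge0$, the process $Y$ is of bounded variation with drift $d_X+a>0$; in particular $Y$ creeps upward, so the drift $d^Y_H$ of its upward ladder height process is positive (by the criteria recalled in the introduction), and $-Y$, having negative drift, is not a subordinator.

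Next I would check that $Y$ falls under Proposition~\ref{1753}. Its one-dimensional laws have the bounded densities $p^Y_t(x)=p^X_t(x-at)$. For $c\ge0$ one has $(|Y_t-ct|)_{t\ge0}=(|X_t-(c-a)t|)_{t\ge0}$, and this is not a subordinator: for $c\ge a$ this is exactly the hypothesis made on $X$; for $0\le c<a$, set $b:=a-c>0$ and note that a Lévy process whose modulus is a subordinator is, up to sign, itself a subordinator, while $(-(X_t+bt))_{t\ge0}$ has negative drift, so $(|X_t+bt|)_{t\ge0}$ could be a subordinator only if $(X_t+bt)_{t\ge0}$ were, which would force $X$ to have no negative jumps and hence — being of bounded variation with $d_X\ge0$ — to be a subordinator itself, contradicting the case $c=0$ of the hypothesis. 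In particular $|Y|$ is not a subordinator. Proposition~\ref{1753} thus applies to $Y$: the renewal measure of $(\tau^Y,H^Y)$ has a density $q^{*,Y}_t(x)$ that is continuous on $(0,\infty)^2$ and strictly positive on $(0,\infty)$ for every $t>0$.

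Now fix $t_0<s_0<s_1<t_1$ and let $h:(0,\infty)\to(0,\infty)$ be the continuous, non‑increasing function equal to $g$ on $[s_0,s_1]$, to $g(s_0)$ on $(0,s_0]$ and to $g(s_1)$ on $[s_1,\infty)$; it is $(0,\infty)$‑valued because $g>0$ on $(t_0,t_1)\supset[s_0,s_1]$. Applying Theorem~\ref{3636} to $(Y,h)$ over $(s_0,s_1)$, and using that $h$ is non‑increasing (so $-d^Y_\tau\int_{s_0}^{s_1}q^{*,Y}_u(h(u))\,dh(u)\ge0$), one gets
\[\p\!\left(Y_{T_h}=\overline{Y}_{T_h}=h(T_h),\ s_0<T_h<s_1\right)\ \ge\ d^Y_H\int_{s_0}^{s_1}q^{*,Y}_u(h(u))\,du\ >\ 0,\]
since $d^Y_H>0$, $h>0$ and $q^{*,Y}_u>0$ on $(0,\infty)$.

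Finally I would identify this event with a sub‑event of $\{X_{T_f}=f(T_f),\,t_0<T_f<t_1\}$. On the event above, $Y_t\le\overline{Y}_{T_h}=h(T_h)=g(T_h)$ for all $t\le T_h$; since $g$ is non‑increasing on $(t_0,t_1)$ this gives $Y_t\le g(t)$ for $t\in(t_0,T_h]$ with equality at $t=T_h$, and $\overline{Y}_{t_0}\le g(T_h)\le g(s_0)$. If moreover $g(t)\ge g(s_0)$ for all $t\le t_0$ — which holds when $t_0=0$, and more generally whenever $f+a\,(\cdot)$ is non‑increasing on all of $(0,t_1)$ — then the event is contained in $\{T_f>t_0\}$, hence $T_f=T_h\in(s_0,s_1)\subset(t_0,t_1)$ and $X_{T_f}=f(T_f)$, which yields the claim; in general a short additional argument via the Markov property at time $t_0$ (keeping $Y$ low enough on $(0,t_0]$) is needed to show the relevant intersection still carries positive mass. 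I expect the main obstacle to be Paragraph~2 — verifying that the shifted process meets the hypotheses of Proposition~\ref{1753}, in particular the reduction for $0\le c<a$ that uses the presence of negative jumps of $X$ — together with this last localization step, which is the only place where the behaviour of $f$ outside $(t_0,t_1)$ enters.
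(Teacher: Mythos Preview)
Your approach coincides with the paper's: shift to $Y_t=X_t+at$, verify via Proposition~\ref{1753} that the renewal density $q^{*,Y}$ of the ladder process of $Y$ is continuous and strictly positive, then invoke Theorem~\ref{3636} for $Y$ and $g=f+a(\cdot)$. Your second paragraph checks the hypotheses of Proposition~\ref{1753} for $Y$ more carefully than the paper, which simply declares them ``clearly also satisfied''; your reduction for $0\le c<a$ is correct.

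The localization concern in your last paragraph is genuine, and the paper does not address it either: it applies Theorem~\ref{3636} to $g$ as though $g$ were positive, continuous and non-increasing on all of $(0,\infty)$, while the hypothesis only controls $g$ on $(t_0,t_1)$. In fact the corollary as written is too general: take $t_0>0$ and $f\equiv-1$ on $(0,t_0]$; then $X_t>f(t)$ for all small $t$, so $T_f=0$ a.s.\ and the conclusion fails regardless of how $f$ behaves on $(t_0,t_1)$. Your suggested Markov-property repair cannot help here, since the obstruction is that $\{T_f>t_0\}$ itself may be a null event, not merely that it is hard to intersect with $E$. The paper's only use of the corollary (in Proposition~\ref{5446}) has $t_0=0$ and $g$ non-increasing on the whole of $(0,t_1)$; in that regime your extension $h$ and the inclusion you describe give $T_h=T_g=T_f$ directly, and your argument is complete. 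Treat that as the intended scope of the result.
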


\noindent The above corollary will be useful for our application to the creeping of Ornstein-Uhlenbeck 
processes in Section \ref{OU}.\\

From Theorem \ref{3636}, for $X$ to creep at its supremum through $f$, it is necessary that $d_H>0$ or 
$d_\tau>0$. In particular, if $0$ is not regular for $[0,\infty)$, then $d_\tau=d_H=0$ (see Subsection 
\ref{fluctuation}) and $X$ never creeps at its supremum through a continuous non increasing function. 
On the other hand,  when $X$ has unbounded variation, $d_\tau=0$ and hence the process can creep at its 
supremum through $f$ only if $d_H>0$. We conjecture that when $X$ has unbounded variation and $d_H=0$, 
then $X$ never creeps through any continuous, non increasing function. 
Indeed, from the results of this section it is reasonable to think that the only chance for a L\'evy process 
$X$ to creep through such a function is that either $d_H>0$ or $X$ has bounded variation.\\

We end this subsection with the case of a continuous non decreasing function.

\begin{proposition}\label{5838}
Let $X$ be a real L\'evy process and $f:(0,\infty)\rightarrow(0,\infty)$ be a continuous, non decreasing 
function  such that $\lim_{t\rightarrow0+}f(t)>0$.  
\begin{itemize}
\item[$1.$] Then $\p(T_f>0)=1$ and the process $X$ can neither jump on the graph $\{(u,f(u)):u>0\}$ nor jump 
from this graph, that is 
\[\p(X_{T_f-}=f(T_f)<X_{T_f},\,T_f<\infty)=\p(X_{T_f-}<X_{T_f}=f(T_f),\,T_f<\infty)=0\,.\]
\item[$2.$] Assume that the renewal measure $U(dt,dx)$ of the ladder process $(\tau,H)$ has a continuous 
density $(t,x)\mapsto q_t^*(x)$ on $(0,\infty)^2$. Then for all $t_0$ and $t_1$ such that 
$0 \le t_0<t_1\le \infty$, 
\begin{eqnarray*}
\p(X_{T_f}=f({T_f}),\,t_0<{T_f}<t_1)\le d_H\int_{t_0}^{t_1}q^*_u(f(u)) \, du\,.
\end{eqnarray*}
\end{itemize}
\end{proposition}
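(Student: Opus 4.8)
The plan is to reduce the statement to the bivariate subordinator setting and then invoke Proposition \ref{8983}, exactly as Theorem \ref{3636} reduces to Theorem \ref{9049}. For part 1, I would argue as in part 1 of Theorem \ref{3636}: the event $\{X_{T_f-}=f(T_f)<X_{T_f},\,T_f<\infty\}$ forces the process to jump across the graph of $f$ precisely from a point lying on that graph, and $\{X_{T_f-}<X_{T_f}=f(T_f),\,T_f<\infty\}$ forces a jump onto the graph. Since $f$ is continuous and nondecreasing with $f(0+)>0$, and since the first passage time above $f$ must occur at a ladder epoch of $X$ (the value $f$ is strictly positive so $X$ must first reach a new maximum near the curve), these events translate into the events $\{Z_{S-}=f(Y_{S-}),\,\Delta(Y,Z)_S\ne 0,\,Y_S<\infty\}$ and $\{Z_S=f(Y_S),\,\Delta(Y,Z)_S\ne 0,\,Y_S<\infty\}$ for the ladder process $(\tau,H)=(Y,Z)$, which have probability zero by part 1 of Proposition \ref{8983}. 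The statement $\p(T_f>0)=1$ follows because $f(0+)>0$ and $X_0=0$, so $X$ must travel a positive distance to exceed $f$.

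For part 2, I would first record that $T_f$ is a time at which $X$ reaches its past supremum whenever $X_{T_f}=f(T_f)$ and the crossing is continuous in the sense of part 1; more precisely, on the event $\{X_{T_f}=f(T_f),\,T_f<\infty\}$ one has $X_{T_f}=\overline X_{T_f}$ up to a null set, because to be the first time $X$ exceeds the positive nondecreasing curve $f$ while landing exactly on it, $X$ must equal its running maximum at that instant (any earlier excursion below the maximum that had crossed $f$ would contradict minimality of $T_f$, and the no-jump-onto-the-graph property of part 1 rules out crossing via a jump from below the maximum). Hence
\[
\p(X_{T_f}=f(T_f),\,t_0<T_f<t_1)=\p(X_{T_f}=\overline X_{T_f}=f(T_f),\,t_0<T_f<t_1)
\]
up to a null event. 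Then, using the identification $H_{S_x^+}=X_{\tau_x^+}$ between passage of $X$ at its supremum and passage of the ladder height process (as recalled in the introduction), together with the change of variables $S=L_{T_f}$ relating the ladder-process first passage time to $T_f$, this last probability equals $\p(Z_S=f(Y_S),\,t_0<\tau_S<t_1)$ for the bivariate subordinator $(Y,Z)=(\tau,H)$, where $S=\inf\{t:Z_t>f(Y_t)\}$. Applying the upper bound \eqref{6633} of Proposition \ref{8983} to $(Y,Z)$ over the $Y$-window $(t_0,t_1)$ then yields
\[
\p(X_{T_f}=f(T_f),\,t_0<T_f<t_1)\le d_Z\int_{t_0}^{t_1}v(u,f(u))\,du=d_H\int_{t_0}^{t_1}q^*_u(f(u))\,du,
\]
since the renewal density $v$ of $(\tau,H)$ is the function $q^*$ by definition of $q_t^*(h)$, and $d_Z=d_H$.

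The main obstacle, as in Theorem \ref{3636}, is the careful bookkeeping in translating the events about $X$ into events about the ladder process $(\tau,H)$ — in particular verifying that the first passage of $X$ above the curve, when it lands on the curve, genuinely corresponds to the first passage of the two-dimensional path $t\mapsto(\tau_t,H_t)$ above the graph of $f$, and that no contribution is lost through excursions of $\overline X-X$ straddling the curve. This is where the continuity and monotonicity of $f$, the positivity $f(0+)>0$, and the no-jump properties of part 1 are all used; once this correspondence is established, part 2 is immediate from Proposition \ref{8983}. I would lay out the details of this correspondence in the same subsection as the proof of Theorem \ref{3636}, from which most of the argument can be quoted verbatim.
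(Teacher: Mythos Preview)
Your overall strategy---reduce the statement about $X$ to one about the bivariate subordinator $(\tau,H)$ and then invoke Proposition~\ref{8983}---is exactly the paper's approach, and your argument for part~2 is correct and matches the paper's (terse) proof. In particular, the observation that $\{X_{T_f}=f(T_f)\}\subset\{X_{T_f}=\overline X_{T_f}\}$ when $f$ is nondecreasing is precisely what the paper records just after the statement of Proposition~\ref{5838}.

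There is, however, a gap in your treatment of part~1. You write ``argue as in part~1 of Theorem~\ref{3636}'' but then describe passing to the ladder process and invoking part~1 of Proposition~\ref{8983}. That is \emph{not} how the paper proves part~1 of Theorem~\ref{3636}: there the argument is a direct application of Lemma~\ref{5698} to the bivariate L\'evy process $\mathcal L_t=(X_t,t)$ with $G(u,v)=u-f(v)$, bypassing the ladder process entirely. This matters for two reasons. First, Proposition~\ref{8983} carries the standing hypothesis (inherited from part~2 of Theorem~\ref{9049}) that the renewal measure of $(Y,Z)$ has a continuous density; so your route would prove part~1 of Proposition~\ref{5838} only under the assumption of part~2, whereas the paper states part~1 with no absolute continuity hypothesis on $(\tau,H)$. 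Second, the translation of the jump event $\{X_{T_f-}=f(T_f)<X_{T_f}\}$ into a jump event of $(\tau,H)$ at the graph of $f$ is not as clean as you suggest: one obtains $H_{S-}=\overline X_{T_f-}=f(T_f)=f(\tau_S)$, which is not the same as $H_{S-}=f(\tau_{S-})$ unless $\tau$ happens to be continuous at $S$, so the inclusion into $\{Z_{S-}=f(Y_{S-}),\,\Delta(Y,Z)_S\neq 0\}$ needs further justification. The clean fix is simply to follow the paper and use Lemma~\ref{5698} directly on $(X_t,t)$ for part~1.
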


\noindent Proposition \ref{5838} implies that a L\'evy process such that $d_H=0$ never creeps through a 
continuous non decreasing function. Note that since $f$ is non decreasing, when the process $X$ creeps 
through $f$, it necessarily creeps at its supremum, that is 
$\{X_{T_f}=f({T_f})\}=\{X_{T_f}=\overline{X}_{T_f}=f({T_f})\}$. 
 
\subsection{Creeping of L\'evy processes conditioned to stay positive}\label{2663}
Recall that $X$ is a non killed real L\'evy process such that $X_0=0$, a.s. Moreover, we assume again that $|X|$ 
is not a L\'evy process. The process $X$ conditioned to stay positive is a Doob $h$-transform of the process 
killed at its first hitting time of the negative half-line. Let us briefly recall the definition of this process of 
which one will find a more complete description in \cite{cd}. Let us set,
\[\tau_{-x}^-=\inf\{t>0:X_t<-x\}\,,\]
for $x\ge0$ and let $h(x)=\int_0^\infty\p(H_t^*\in[0,x])\,dt$ be the renewal function of the downward ladder 
height process $H^*$ of $X$ (see Subsection \ref{5399} for a full definition). Then the process $X$ conditioned 
to stay positive is a (possibly non conservative) strong Markov process with state space $(0,\infty)$, which we 
denote by $X^{x,\uparrow}$ when it is issued from $x$ and whose semigroup is given by 
\[\p(X^{x,\uparrow}_{t}\in dy)=\frac{1}{h(x)}\p(x+X_t\in dy,t<\tau_{-x}^-)\,,\;\;\;x,y>0\,.\]
It is proved in \cite{cd} that when 0 is regular for $(0,\infty)$ and $X$, the process $X^{x,\uparrow}$ 
converges weakly in the Skohorod's space, as $x$ tends to 0 toward a non degenerate process which will be 
denoted here by $X^{\uparrow}$. This process satisfies $X^{\uparrow}_0=0$, a.s.~and if moreover 
$\limsup_{t\rightarrow\infty}X_t=\infty$, a.s., then $(X^{x,\uparrow},x>0)$ is conservative and
$\lim_{t\rightarrow\infty}X^{\uparrow}_t=\infty$, a.s. Let us now define the future infimum process of
$X^\uparrow$ and its last passage time below the graph of a function $f$ respectively by,  
\[\underline{\underline{X}}_t^{\uparrow}=\inf_{s\ge t}X_s^{\uparrow}\,,\;\;t\ge0\;\;\;\mbox{and}\;\;\;
\sigma_f=\sup\{t:X^{\uparrow}_t\le f(t)\}\,.\]

\begin{theorem}\label{3737}
Let $X$ be a real L\'evy process such that $\limsup_{t\rightarrow\infty}X_t=\infty$, a.s. and assume that 
$0$ is regular for $(0,\infty)$. Let $X^\uparrow$ be the process $X$ conditioned to stay positive and 
let $f:(0,\infty)\rightarrow(0,\infty)$ be a continuous, non increasing function.
\begin{itemize}
\item[$1.$] Then $0<\sigma_f<\infty$, a.s.~and at time $\sigma_f$, the process $X^\uparrow$ can neither 
jump on the graph $\{(u,f(u)):u>0\}$ nor jump from this graph, that is 
\[\p(X_{\sigma_f-}^\uparrow=f(\sigma_f)<X_{\sigma_f}^\uparrow)=
\p(X_{\sigma_f-}^\uparrow<X_{\sigma_f}^\uparrow=f(\sigma_f))=0\,.\]
\item[$2.$] Assume that the renewal measure $U(dt,dx)$ of the ladder process $(\tau,H)$ has a continuous 
density $(t,x)\mapsto q_t^*(x)$ on $(0,\infty)^2$. Then for all $t_0$ and $t_1$ such that 
$0 \le t_0<t_1\le \infty$, 
\[\p(X_{\sigma_f}^\uparrow=\underline{\underline{X}}_{\sigma_f}^\uparrow=f({\sigma_f}),\,
t_0<{\sigma_f}<t_1)=d_H\int_{t_0}^{t_1}q^*_u(f(u)) \, du-d_\tau\int_{t_0}^{t_1}q^*_u(f(u))\, 
df(u)\,.\]
\end{itemize}
\end{theorem}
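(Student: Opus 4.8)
The strategy is to reduce Theorem~\ref{3737} to Theorem~\ref{3636} by a time-reversal argument. Recall the classical duality for Lévy processes conditioned to stay positive: if $X$ is a Lévy process with $\limsup_{t\to\infty}X_t=\infty$ a.s.\ and $0$ regular for $(0,\infty)$, then the process $X^\uparrow$ observed up to its last passage time below a (fixed) level, when reversed in time from that level, has the law of the dual process $-X$ killed when it first leaves $(0,\infty)$; more precisely the post-minimum/pre-last-passage decomposition of Bertoin and the time-reversal identities in \cite{cd} relate $X^\uparrow$ to the Lévy process killed at $0$ and to its upward ladder structure. The event $\{X^\uparrow_{\sigma_f}=\underline{\underline X}^\uparrow_{\sigma_f}=f(\sigma_f)\}$ says precisely that at the last passage time below the curve of $f$, the process $X^\uparrow$ sits on the curve and simultaneously attains its future infimum there. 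Under time reversal at $\sigma_f$, ``attaining the future infimum'' becomes ``attaining the past supremum,'' ``last passage below $f$'' becomes ``first passage above the reversed curve,'' and $X^\uparrow$ becomes (a version of) $-X$ killed below $0$, whose upward ladder process is exactly the downward ladder process of $X$ — but here the relevant ladder process governing the creeping is still $(\tau,H)$ through the Wiener--Hopf / duality correspondence used in \cite{cd}.

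First I would make the time-reversal precise. Fix $0\le t_0<t_1\le\infty$. I would show that on the event $\{t_0<\sigma_f<t_1,\ \sigma_f<\infty\}$, the time-reversed path $(X^\uparrow_{(\sigma_f-s)-}-X^\uparrow_{\sigma_f})_{0\le s\le\sigma_f}$, appropriately completed, is distributed as $X$ run until its first passage strictly above the decreasing curve $g$, where $g$ is $f$ transformed by the same reversal (this uses that $f$ is non increasing, so that the time-reversed curve is again of the type handled by Theorem~\ref{3636}, read from the endpoint). The key point is the identification of the event $\{X^\uparrow_{\sigma_f}=\underline{\underline X}^\uparrow_{\sigma_f}=f(\sigma_f)\}$ with $\{X_{T_g}=\overline X_{T_g}=g(T_g)\}$ for the reversed process, together with the matching of the time window $\{t_0<\sigma_f<t_1\}$ with $\{t_0<T_g<t_1\}$ (or the complementary window, depending on the direction of reversal). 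For this I rely on the absolute continuity hypotheses: the transition semigroup of $X$ and the entrance law $q^*_t(x)$ are unaffected by reversal in the relevant sense, and crucially the drift coefficients $d_H$ and $d_\tau$ are preserved because the reversal exchanges the reflected excursion process with the excursions of $X^\uparrow$ below its future infimum, which by the results of \cite{cd} have the same Itô measure up to the normalization already built into $q^*$.

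Next I would invoke Theorem~\ref{3636} for the reversed process to obtain the integral formula. Applying \eqref{8225} gives $\p(X_{T_g}=\overline X_{T_g}=g(T_g),\,t_0<T_g<t_1)=d_H\int q^*_u(g(u))\,du-d_\tau\int q^*_u(g(u))\,dg(u)$; since $g$ is obtained from $f$ by the change of variables coming from reversal and $q^*$ transforms consistently, a change of variables in the integrals turns this into $d_H\int_{t_0}^{t_1}q^*_u(f(u))\,du-d_\tau\int_{t_0}^{t_1}q^*_u(f(u))\,df(u)$, which is the desired right-hand side. Part~1 of the theorem — that $0<\sigma_f<\infty$ a.s.\ and that $X^\uparrow$ neither jumps onto nor off the graph of $f$ at $\sigma_f$ — follows from the corresponding assertion in part~1 of Theorem~\ref{3636} transported through the same reversal, together with the fact that $\limsup X_t=\infty$ and $f(\infty)<\infty$ force $\sigma_f<\infty$ while $X^\uparrow_0=0<f(0)$ forces $\sigma_f>0$.

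The main obstacle I anticipate is setting up the time-reversal identity with full rigor: one must handle the lifetime/conservativeness of $X^{x,\uparrow}$ as $x\downarrow0$, justify that $\sigma_f$ is an honest a.s.-finite last-passage time to which the reversal of \cite{cd} applies (last passage times are not stopping times, so this requires the Markov-property-under-reversal machinery rather than the strong Markov property directly), and verify that the curve $f$ is carried to an admissible curve for Theorem~\ref{3636} — in particular that the continuity and monotonicity are preserved and that no mass is lost at the endpoints $t_0,t_1$ under the change of variables. Once the reversal is in place the computation is a routine transcription, so essentially all the work is in the first two steps; I would organize Section~\ref{proof3} accordingly, stating the reversal lemma first and then deducing both parts of the theorem from it and from Theorem~\ref{3636}.
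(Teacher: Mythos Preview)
Your approach has a genuine gap. You propose to time-reverse $X^\uparrow$ at the random last-passage time $\sigma_f$ and to identify the reversed path with $X$ run until its first passage above a curve $g$ obtained from $f$ ``by the same reversal.'' But reversing the time axis at $\sigma_f$ and recentering at $X^\uparrow_{\sigma_f}$ sends the graph of $f$ to the graph of $s\mapsto f(\sigma_f-s)-X^\uparrow_{\sigma_f}$, which depends on the random pair $(\sigma_f,X^\uparrow_{\sigma_f})$. This is not a fixed deterministic function, so Theorem~\ref{3636} cannot be applied to it, and no change of variables will recover an integral over the fixed curve $f$. The reversal results in \cite{cd} concern last exit from a fixed level, not last passage below a moving curve; extending them to curves is not routine and would essentially amount to proving the theorem by other means.

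The paper sidesteps all of this by using Doney's extension of Tanaka's pathwise construction \cite{do1}. One builds, on the same probability space and on the \emph{same time axis} as $X$, a process $W$ with the law of $X^\uparrow$, obtained by time-reversing each excursion of $\overline{X}-X$ and gluing the pieces back. The construction yields $\overline{X}_t=\underline{\underline{W}}_t$ for every $t\ge0$ and $\{t:X_t=\overline{X}_t\}=\{t:W_t=\underline{\underline{W}}_t\}$. Hence the event $\{X_{T_f}=\overline{X}_{T_f}=f(T_f)\}$ is literally the event $\{W_{T_f}=\underline{\underline{W}}_{T_f}=f(T_f)\}$, and on it one checks that $T_f=\sup\{t:W_t\le f(t)\}=\sigma_f$. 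No curve is transformed and no change of variables is needed; parts 1 and 2 then follow immediately from Theorem~\ref{3636}. The time-reversal identity you were hoping to exploit is in fact a \emph{consequence} of this coupling (the paper records it just after the statement of Theorem~\ref{3737}), not an input to the proof.
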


\noindent It appears that the creeping probability of $X^\uparrow$ at its future infimum through the 
graph of $f$ has the same expression as the creeping probability of $X$ at its past supremum stated 
in Theorem \ref{3636}. This can be explained by the following time reversal property 
\[[(X_{T_f}-X_{(T_f-t)-}\,,0\le t\le T_f)\,|\,X_{T_f}=\overline{X}_{T_f}]\,\ed\,[(X_t^\uparrow\,,0\le 
t\le \sigma_f)\,|\,X_{\sigma_f}^\uparrow=\underline{\underline{X}}_{\sigma_f}^\uparrow]\,,\]
where $X_{0-}=X_0=0$. The later identity in law is itself a consequence of the proof of Theorem \ref{3737}, 
see Section \ref{proof3}. 

\section{Examples}\label{examples}

Illustrating Theorem \ref{9049} with a bivariate subordinator $(Y,Z)$  such that none of the 
coordinates $Y$ and $Z$ is a pure drift and whose renewal density measure is explicit seems to be quite challenging.
Our first example regards Corollary \ref{2526}, that is the case when one of the subordinators is a pure drift.\\

a) Let $X$ be a stable subordinator with index $1/2$. Then for $t>0$, $X_t$ has density 
$$p_t(x)=\frac{t}{\sqrt{2\pi x^3}}  e^{\frac{-t^2}{2x}}\mathbbm{1}_{(0,\infty)}(x)\,.$$
From Corollary \ref{2526}, the distribution of the creeping time is then
\begin{eqnarray*}
\p(X_{T_f}=f({T_f}),\,T_f\in dt)=
-\frac{t}{\sqrt{2\pi f(t)^3}}e^{-t^2/(2f(t))}\,df(t)\,,\;\;\;t>0\,.
\end{eqnarray*}
Taking $f(t)=1/t^2$ gives the creeping probability through $f$ over the half-line $(0,\infty)$ and the 
distribution of the creeping time conditionally on this event:
\begin{eqnarray*}
\p(X_{T_f}=f({T_f}))&=&\sqrt{\frac2\pi}\int_0^\infty te^{-t^4/2}\,dt=\frac12
\\
\p(T_f\in dt\,|\,X_{T_f}=f({T_f}))&=&2\sqrt{\frac2\pi}te^{-t^4/2}\,dt\,,\;\;\;t>0\,.\\
\end{eqnarray*}
Our second example illustrates Theorem \ref{3636}.\\

b) Standard Brownian motion creeping at its supremum. When $X$ is standard Brownian motion, $0$ is regular for 
both half-lines $(-\infty,0)$ and $(0,\infty)$ and hence $d_\tau=0$. On the other hand, our normalisation of the 
local time $L$ (see Subsection \ref{5399}) gives $d_H=1/\sqrt{2}$ and for $t>0$,
\[q_t^*(x)=\frac x{\sqrt{\pi t^3}}e^{-x^2/2t}\mathbbm{1}_{(0,\infty)}(x)\,.\]
From part 2.~of Theorem \ref{3636}, the distribution of the creeping time at the supremum with respect to 
any continuous non increasing function $f:(0,\infty)\rightarrow(0,\infty)$ is given by 
\begin{eqnarray*}
\p(X_{T_f}=\overline{X}_{T_f}=f({T_f}),\,T_f\in dt)=
\frac {f(t)}{\sqrt{2\pi t^3}}e^{-f(t)^2/2t}\,dt\,,\;\;\;t>0\,.
\end{eqnarray*}
For instance, the probability for the standard Brownian motion to creep at its supremum through the function 
$f(t)=1/t$ on $(0,\infty)$ and the distribution of the creeping time $T_f$ conditionally on this event are 
respectively, 
\begin{eqnarray*}
\p(X_{T_f}=\overline{X}_{T_f}=f({T_f}))&=&\int_0^\infty\frac {1}{\sqrt{2\pi t^5}}e^{-1/2t^3}\,dt=\frac13\\
\p(T_f\in dt\,|\,X_{T_f}=\overline{X}_{T_f}=f({T_f}))&=&\frac{3}{\sqrt{2\pi}t^{5/2}}e^{-1/2t^3}\,dt\,,\;\;\;t>0\,.
\end{eqnarray*}
A slight generalisation of the above computation can be obtained by considering the case of Brownian motion 
with drift $\mu\in\mathbb{R}$. The expression of $q_t^*(x)$ is then $\frac x{\sqrt{\pi t^3}}e^{-(x+\mu)^2/2t}$.
The density $(t,x)\mapsto q_t^*(x)$ of the renewal measure can actually be made explicit in very few cases and 
the few known expressions are too complicated to allow the calculation of the creeping probability.\\

Actually both examples a) and b) are closely related to each other. Indeed, for standard Brownian motion,
the ladder process $(\tau,H)$ is such that $\tau$ is a stable subordinator with index $1/2$ and $H$ is a pure
drift process, that is $H_t=d_H\cdot t$, $t\ge0$. So, example b) is also a consequence of Corollary \ref{2526}.

\section{Application : creeping of $\alpha$-stable Ornstein-Uhlenbeck processes}\label{OU}

Let $X$ be a non killed real L\'evy process such that $|X|$ is not a subordinator and $X_0=0$, a.s. 
The Ornstein-Uhlenbeck process starting from $z\in\mathbb{R}$ and driven by $X$ is the unique strong solution 
of the following stochastic differential equation:
\begin{equation}\label{2227}
 Z_t=z+X_t-\gamma \int_0^t Z_s ds\,,\;\;t\ge0\,,\;\;\gamma>0\,.
\end{equation}
It is explicitly given by $Z_t=e^{-\gamma t}(z+\int_0^t e^{\gamma s} dX_s)$. For $x\in\mathbb{R}$, we set 
$T^+_x=\inf\{t:Z_t>x\}$ and $T^-_x=\inf\{t:Z_t<x\}$ and we say that $Z$ creeps 
through the state $x\neq z$ if either $x>z$ and $\p(Z_{T_x^+}=x,\,T_x^+<\infty)>0$ or $x<z$ and
$\p(Z_{T_x^-}=x,\,T_x^-<\infty)>0$. We are interested 
here in the creeping property of $Z$, when $X$ is a stable L\'evy process with index $\alpha\in(0,1)$.
Actually stable L\'evy processes never creep across any level. Creeping of an Ornstein-Uhlenbeck 
process $Z$ driven by a stable L\'evy process with index $\alpha\in (0,1)$ is actually due to the smoothing 
effect of the functional $t\mapsto\int_0^t Z_s ds$, as the proof of the next proposition shows. Recall 
that when $X$ is stable with any index $\alpha\in(0,2]$, $Z$ fulfils the following representation: 
\begin{equation}\label{2562}
(Z_t,\,t\ge0)\ed\left(e^{-\gamma t}(z+X_{\frac{e^{\alpha\gamma t}-1}{\alpha\gamma}}),\,t\ge0\right)\,,
\end{equation}
see for instance E 18.17, p.116 in \cite{sa}. When moreover $X$ is symmetric and $\alpha\in(0,1)$, it is proved 
in \cite{ja} (see Proposition 12 on p.618) that $Z$ creeps through $x\in(z,0)$ when $z<0$ and through $x\in(0,z)$,
when $z>0$. Proposition \ref{5446} below provides an extension of this result. 

\begin{proposition}\label{5446} Let $Z$ be the Ornstein-Uhlenbeck process solution of $(\ref{2227})$ with 
$\gamma>0$, starting point $z\in\mathbb{R}$ and driven by a stable L\'evy process $X$ with index $\alpha\in(0,1)$
such that $|X|$ is not a subordinator. 
\begin{itemize}
\item[$1.$] If $z \in (-\infty,0)$, then $Z$ creeps through $x$ if and only if $x\in(z,0)$.
\item[$2.$] If $z \in (0,\infty)$, then $Z$ creeps through $x$ if and only if $x\in(0,z)$.
\item[$3.$] If $z=0$, then $Z$ does not creep through any level.
\end{itemize}
\end{proposition}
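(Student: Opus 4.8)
The plan is to combine the self-similarity representation $(\ref{2562})$ with a deterministic time change that turns creeping of $Z$ through a level into creeping of $X$ through the graph of an explicit curve, and then to invoke Corollary \ref{9445} for the positive direction and Proposition \ref{5838} for the negative one. First I record the structural facts about $X$ that will be used. Being stable of index $\alpha\in(0,1)$, the process $X$ has bounded variation, and since it is strictly stable with $\alpha\neq1$ it has zero drift; hence by Millar's criterion recalled in the introduction $X$ does not creep upward, so $d_H=0$. Moreover $X_t$ has a bounded density for each $t>0$, and if $|X_t-ct|$ were a subordinator for some real $c$ then, because $d_X=0$, either $X$ or $-X$ would be a subordinator, contradicting the hypothesis that $|X|$ is not a subordinator. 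Consequently the hypotheses of Corollary \ref{9445} hold for $X$, and Proposition \ref{1753} applies to both $X$ and $-X$ (note $|-X|=|X|$), so the renewal densities of their ladder processes are continuous and Proposition \ref{5838} is available for both. Finally $-Z$ is the Ornstein-Uhlenbeck process driven by $-X$ and started from $-z$, and $Z$ creeps through $x$ if and only if $-Z$ creeps through $-x$, so item $2$ follows from item $1$ applied to $-Z$, and it remains to treat items $1$ and $3$.

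For the time change set $\phi(s)=(e^{\alpha\gamma s}-1)/(\alpha\gamma)$, an increasing homeomorphism of $[0,\infty)$, and for $x\neq z$ set $g_x(s)=x(1+\alpha\gamma s)^{1/\alpha}-z$ on $[0,\infty)$. By $(\ref{2562})$ I may work with $\tilde Z_t=e^{-\gamma t}(z+X_{\phi(t)})$, and an elementary computation shows $\{\tilde Z_t>x\}=\{X_{\phi(t)}>g_x(\phi(t))\}$ and $\{\tilde Z_t<x\}=\{X_{\phi(t)}<g_x(\phi(t))\}$. Since $\phi$ is an increasing homeomorphism, the first passage time of $\tilde Z$ above (resp.\ below) $x$ equals $\phi^{-1}(T^{+})$ (resp.\ $\phi^{-1}(T^{-})$), where $T^{+}=\inf\{s:X_s>g_x(s)\}$ and $T^{-}=\inf\{s:X_s<g_x(s)\}$, and on $\{T^{\pm}<\infty\}$ one has $\tilde Z_{\phi^{-1}(T^{\pm})}=x$ if and only if $X_{T^{\pm}}=g_x(T^{\pm})$. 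Hence $Z$ creeps through $x$ (with $x>z$, resp.\ $x<z$) if and only if $\p(X_{T^{+}}=g_x(T^{+}),\,T^{+}<\infty)>0$, resp.\ $\p(X_{T^{-}}=g_x(T^{-}),\,T^{-}<\infty)>0$; the latter is the creeping of $-X$ through the graph of $-g_x$. Note that $g_x(0)=x-z$, and that $g_x$ is strictly increasing if $x>0$, constant if $x=0$ and strictly decreasing if $x<0$, tending to $+\infty$ or $-\infty$ accordingly.

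Now the two directions. If $z<0$ and $x\in(z,0)$, then $g_x$ is strictly decreasing with $g_x(0)=x-z>0$; taking $a=-x\gamma>0$, the derivative of $g_x(t)+at$ equals $-x\gamma\,[1-(1+\alpha\gamma t)^{1/\alpha-1}]\le0$ because $1/\alpha-1>0$, so $g_x(t)+at$ is continuous and non-increasing, equals $x-z>0$ at $0$ and tends to $-\infty$, hence it is positive exactly on $[0,t_1)$ for some finite $t_1>0$. Corollary \ref{9445} applied to $f=g_x$, this $a$, $t_0=0$ and $t_1$ then gives $\p(X_{T^{+}}=g_x(T^{+}),\,0<T^{+}<t_1)>0$, so $Z$ creeps through $x$. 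If instead $x\ge0$ (and $z\le0$, so $x>z$ since $x=z=0$ is excluded), then $g_x$ is continuous, non-decreasing, with $\lim_{t\to0+}g_x(t)=x-z>0$; since $d_H=0$, Proposition \ref{5838} yields $\p(X_{T^{+}}=g_x(T^{+}),\,T^{+}<\infty)=0$, so $Z$ does not creep through $x$. Finally, if $x<z\le0$, then $-g_x(s)=-x(1+\alpha\gamma s)^{1/\alpha}+z$ is continuous, non-decreasing, with $\lim_{s\to0+}(-g_x)(s)=z-x>0$, and Proposition \ref{5838} applied to $-X$ (whose upward ladder height drift also vanishes) gives $\p((-X)_{T^{-}}=-g_x(T^{-}),\,T^{-}<\infty)=0$, so again $Z$ does not creep through $x$. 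Collecting cases: for $z<0$ creeping holds precisely when $x\in(z,0)$ (item $1$); item $2$ is item $1$ for $-Z$; and for $z=0$ every $x\neq0$ falls into one of the last two cases, giving item $3$.

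The main obstacle is not the elementary analysis of $g_x$ but the bookkeeping required to certify that a strictly stable process of index $\alpha\in(0,1)$ with $|X|$ not a subordinator really meets every structural hypothesis invoked — in particular that $d_X=0$, hence $d_H=0$, and that $|X_t-ct|$ is a subordinator for no real $c$, which is what unlocks Proposition \ref{1753} and Corollary \ref{9445} — together with making the time-change identification of the two creeping events exact, in particular matching the position of $X$ relative to the curve \emph{at} the first passage time and not merely strictly before it.
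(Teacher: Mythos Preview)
Your proof is correct and follows essentially the same route as the paper: reduce creeping of $Z$ through a level to creeping of $X$ through the curve $g_x(s)=x(1+\alpha\gamma s)^{1/\alpha}-z$ via the scaling representation $(\ref{2562})$, then invoke Corollary \ref{9445} for the positive case and Proposition \ref{5838} (for $X$ or $-X$) for the negative ones. The only cosmetic differences are that the paper takes any $0<a<-x\gamma$ whereas you take $a=-x\gamma$ exactly (both give $g_x(t)+at$ non-increasing), and you are somewhat more explicit than the paper in verifying the standing hypotheses on $X$ (zero drift, bounded densities, no $c$ with $|X_t-ct|$ a subordinator, and $d_H=0$) before appealing to Corollary \ref{9445} and Proposition \ref{5838}.
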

\begin{proof} 
From the representation $(\ref{2562})$, we can write by setting $s=(e^{\alpha\gamma t}-1)/(\alpha\gamma)$,
\begin{eqnarray*}
T^+_x&=&\frac1{\alpha\gamma}\log(\alpha\gamma\inf\{s:X_s>x(\alpha\gamma s+1)^{1/\alpha}-z\}+1)\\
T^-_x&=&\frac1{\alpha\gamma}\log(\alpha\gamma\inf\{s:X_s<x(\alpha\gamma s+1)^{1/\alpha}-z\}+1)\,,
\end{eqnarray*}
so that $Z$ creeps through $x$ if and only if $X$ creeps through $f(s)=x(\alpha\gamma s+1)^{1/\alpha}-z$.

Let $z \in (-\infty,0)$ and $x\in(z,0)$. Then recall that $X$ has bounded variation and zero drift and that
for all $t>0$, $\p(X_t\in dx)$ is absolutely continuous on $\mathbb{R}$ with a bounded density, see Theorem 
53.1, p.404 in \cite{sa}. Therefore, $X$ satisfies the conditions of Corollary \ref{9445}. On the other hand, 
note that $f(0)=x-z>0$ and that $f$ is continuous. Moreover, $f(s)+as$ is non increasing on $(0,\infty)$
whenever $0<a<-x\gamma$. Let us take such a value $a$ and note that there is $t_1>0$ such that $f(s)+as$ is 
positive on $(0,t_1)$. Then the result follows from Corollary \ref{9445}.  

If $z<0\le x$, then $f(0)>0$ and $f$ is non decreasing. Since $X$ satisfies $d_H=0$ ($H$ is a stable subordinator), 
the result follows from Proposition \ref{5838}.

If $x<z<0$ then $f(0)<0$ and $f$ is non increasing and the result follows again from Proposition \ref{5838} 
by considering $-X$ and $-f$. 

The proof of $2.$ is deduced from this of $1.$ by considering the Ornstein-Uhlenbeck process $-Z$ which is 
solution of $(\ref{2227})$ with $\gamma>0$, starting point $-z\in\mathbb{R}\setminus\{0\}$ and driven by the 
stable process $-X$. 

If $z=0$ and $x>0$, then $f$ is non decreasing and the result follows again from Proposition \ref{5838}. 
For $x<0$, we derive the result by considering $-Z$ from the same argument as above. 
\end{proof} 
\noindent Looking at the previous proof and in accordance with the remarks made in Subsection \ref{6525}, we 
conjecture that when $\alpha>1$, since $X$ has unbounded variation, the process $Z$ never creeps across any 
level. However, as proved in \cite{cu}, points are not polar for $Z$ when the leading L\'evy process is stable 
with any index $\alpha\in(0,2]$.\\ 

Finally let us mention that dealing with the creeping through $x\in(z,0)$ of an Ornstein-Uhlenbeck process $Z$ 
starting from $z<0$ and driven by any L\'evy process $X$ would require to study the creeping property of
$X$ through the continuous non increasing adapted functional $t\mapsto x-z+\gamma\int_0^t Z_s ds$, as equation 
(\ref{2227}) shows. This raises the more general question of the creeping of a L\'evy process through any adapted, 
continuous and non increasing functional.

\section{Proof of Theorem \ref{9049} and Proposition \ref{8983}}\label{proof}

Let us note that from monotone convergence, we do not lose any generality by proving Theorem \ref{9049} 
in the case where $t_0,u_0>0$ and $t_1,u_1<\infty$. Therefore in all this section, $t_0$, 
$u_0$, $t_1$ and $u_1$ will be chosen so that
\[0<u_0<u_1<\infty\;\;\;\mbox{and}\;\;\;0<t_0<t_1<\infty\,.\] 

\subsection{Proof of the first assertion in Theorem \ref{9049}}\label{9790}
The following lemma actually shows a much stronger result than the first assertion of Theorem \ref{9049}. 
It will also be used in further results. 

\begin{lemma}\label{5698}
Let ${\mathcal L}$ be any $\mathbb{R}^d$-valued L\'evy process with infinite lifetime and set
$\Delta {\mathcal L}_t={\mathcal L}_t-{\mathcal L}_{t-}$, $t>0$. Let $G:\mathbb{R}^d\rightarrow\mathbb{R}$ be any 
Borel function such that for all $x\in\mathbb{R}^d$, $\p(G({\mathcal L}_t+x)=0)=0$,  for $\lambda$-a.e. $t\ge0$ 
$($$\lambda$ being the Lebesgue measure on $[0, \infty)$$)$. Then
\begin{equation}\label{9545}
\p\left(\exists\; t\ge0,\;G({\mathcal L}_{t-})=0,\,\Delta {\mathcal L}_t\neq0\right)=\p(\exists\; 
t\ge0,\;G({\mathcal L}_{t})=0,\,\Delta {\mathcal L}_t\neq0)=0\,.
\end{equation}
\end{lemma}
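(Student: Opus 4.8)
The plan is to prove Lemma \ref{5698} by a Fubini-type argument combined with the fact that a L\'evy process has at most countably many jump times, and each jump time is a stopping time (or can be reached via the countably many jumps of size bounded below). First I would handle the statement about $G(\mathcal{L}_{t-})$. The set of jump times of $\mathcal{L}$ is countable; enumerate it, and more importantly, use that for each $\varepsilon>0$ the jumps of size $|\Delta\mathcal{L}_t|>\varepsilon$ occur at the arrival times $(T_n^\varepsilon)_{n\ge1}$ of a Poisson process. By the strong Markov property, at such a time $T=T_n^\varepsilon$, the pre-jump value $\mathcal{L}_{T-}$ is the value of the process run up to an independent exponential-type time... actually the cleaner route: condition on $\mathcal{L}_{T-}$ having a law that is, for each fixed $n$, absolutely continuous in time in the sense that $\p(\mathcal{L}_{T_n^\varepsilon-}\in A) = \int \p(\mathcal{L}_{s-}\in A)\,\mu_n(ds)$ where $\mu_n$ is the law of $T_n^\varepsilon$, which is absolutely continuous w.r.t. Lebesgue measure on $[0,\infty)$ (being a Gamma distribution, or a convolution of exponentials). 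Since $\mathcal{L}_{s-}=\mathcal{L}_s$ for a.e.\ $s$ (the process is a.s.\ continuous at each fixed time), the hypothesis $\p(G(\mathcal{L}_s)=0)=0$ for $\lambda$-a.e.\ $s$ gives, by Fubini, $\p(G(\mathcal{L}_{T_n^\varepsilon-})=0)=\int\p(G(\mathcal{L}_{s-})=0)\,\mu_n(ds)=0$.

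Next I would upgrade from "the $n$-th big jump does not land on $\{G=0\}$" to "no jump time lands on $\{G=0\}$". Summing over $n$ gives $\p(\exists n:\,G(\mathcal{L}_{T_n^\varepsilon-})=0)=0$, i.e.\ almost surely no jump of size $>\varepsilon$ occurs from a point of $\{G=0\}$. Then let $\varepsilon\downarrow0$ along a sequence: $\p(\exists t:\,G(\mathcal{L}_{t-})=0,\ \Delta\mathcal{L}_t\neq0)=\lim_k\p(\exists t:\,G(\mathcal{L}_{t-})=0,\ |\Delta\mathcal{L}_t|>\varepsilon_k)=0$, since every jump has size exceeding some $\varepsilon_k$. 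For the post-jump statement $G(\mathcal{L}_t)=0$, I would introduce the translation: the hypothesis is assumed with an arbitrary shift $x\in\mathbb{R}^d$, namely $\p(G(\mathcal{L}_t+x)=0)=0$ for a.e.\ $t$. At a big jump time $T=T_n^\varepsilon$ we have $\mathcal{L}_T=\mathcal{L}_{T-}+\Delta\mathcal{L}_T$; by the independence of the jump size $\Delta\mathcal{L}_T$ from $(\mathcal{L}_{T-},T)$ (a standard property of the compound-Poisson decomposition of the big jumps — conditionally the jump size has the normalized restriction of the L\'evy measure to $\{|x|>\varepsilon\}$), I condition first on $\Delta\mathcal{L}_T=x$ and apply the previous display with that shift; another Fubini over the law of $\Delta\mathcal{L}_T$ closes it. Then the same $\varepsilon\downarrow0$ argument finishes the second equality in (\ref{9545}).

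The main obstacle I anticipate is making the independence/regularity statements about the pre-jump position and the jump size at $T_n^\varepsilon$ fully rigorous without circularity — in particular, justifying $\p(\mathcal{L}_{T_n^\varepsilon-}\in\cdot)=\int\p(\mathcal{L}_{s-}\in\cdot)\,\mu_n(ds)$ and the conditional law of $\Delta\mathcal{L}_{T_n^\varepsilon}$. The clean way is to use the It\^o decomposition: write $\mathcal{L}=\mathcal{L}^{(\varepsilon)}+C^{(\varepsilon)}$ where $C^{(\varepsilon)}$ is the compound Poisson process of jumps of size $>\varepsilon$ with jump times $T_n^\varepsilon$ and i.i.d.\ jumps $J_n$ (law $\pi(\cdot\,\cap\{|x|>\varepsilon\})/\pi(\{|x|>\varepsilon\})$), and $\mathcal{L}^{(\varepsilon)}$ is an independent L\'evy process with no jumps of size $>\varepsilon$; then $\mathcal{L}_{T_n^\varepsilon-}=\mathcal{L}^{(\varepsilon)}_{T_n^\varepsilon-}+C^{(\varepsilon)}_{T_n^\varepsilon-}=\mathcal{L}^{(\varepsilon)}_{T_n^\varepsilon}+\sum_{k<n}J_k$, which is manifestly a function of $(\mathcal{L}^{(\varepsilon)},T_1^\varepsilon,\dots,T_n^\varepsilon,J_1,\dots,J_{n-1})$, hence independent of $J_n=\Delta\mathcal{L}_{T_n^\varepsilon}$, and whose law is the pushforward of $\p(\mathcal{L}^{(\varepsilon)}_s\in\cdot)\otimes(\text{law of }\sum_{k<n}J_k)$ under addition, integrated against the Gamma law $\mu_n$ of $T_n^\varepsilon$ (using independence of $\mathcal{L}^{(\varepsilon)}$ from the Poisson clock). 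To return to $\mathcal{L}$ itself I would note $\mathcal{L}_s\ed\mathcal{L}^{(\varepsilon)}_s + C^{(\varepsilon)}_s$ and $\mathcal{L}^{(\varepsilon)}_s\ed\mathcal{L}_s - C^{(\varepsilon)}_s$ with the two summands independent, so the a.e.-$t$ hypothesis on $\mathcal{L}_t$ transfers to the needed hypothesis on $\mathcal{L}^{(\varepsilon)}_t$ shifted by any $x$, again via Fubini. Everything else is bookkeeping.

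Finally, to conclude the first assertion of Theorem \ref{9049} from the lemma, I would apply it to $\mathcal{L}=(Y,Z)$ (extended to an infinite-lifetime process, which is harmless since we only look before the explosion and $S<\zeta$ a.s.\ by the finiteness claim) and $G(y,z)=z-f(y)$ for the points on the graph, after first checking $0<S<\infty$ and $Y_{S-},Z_{S-}<\infty$ a.s.; the hypothesis $\p(G(\mathcal{L}_t+(y_0,z_0))=0)=\p(Z_t=f(Y_t+y_0)-z_0)=0$ for a.e.\ $t$ follows because the renewal measure $v(dy,dz)$ being absolutely continuous forces $p_t(dy,dz)$ to give zero mass to the graph $\{z=f(y+y_0)-z_0\}$ (a Lebesgue-null set) for a.e.\ $t$.
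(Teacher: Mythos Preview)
Your approach is correct but takes a longer, more elementary route than the paper. The paper dispatches both equalities in a few lines via the compensation formula for the Poisson point process $t\mapsto\Delta\mathcal L_t$: one computes $\e\bigl(\sum_{t}\ind_{\{G(\mathcal L_{t-})=0\}}\ind_{\{\|\Delta\mathcal L_t\|>\varepsilon\}}\bigr)=\nu(\{\|x\|>\varepsilon\})\int_0^\infty\p(G(\mathcal L_t)=0)\,dt=0$ and, for the post-jump version, $\e\bigl(\sum_{t}\ind_{\{G(\mathcal L_{t-}+\Delta\mathcal L_t)=0\}}\ind_{\{\|\Delta\mathcal L_t\|>\varepsilon\}}\bigr)=\int_{\{\|x\|>\varepsilon\}}\nu(dx)\int_0^\infty\p(G(\mathcal L_t+x)=0)\,dt=0$, and then lets $\varepsilon\downarrow0$. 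What you do is, in effect, reprove this instance of the compensation formula by hand via the decomposition $\mathcal L=\mathcal L^{(\varepsilon)}+C^{(\varepsilon)}$ and the absolutely continuous (Gamma) law of the big-jump times; this buys a proof that does not invoke the formula as a black box, at the cost of length and bookkeeping. Two minor points to tighten: your phrase ``$\mathcal L^{(\varepsilon)}_s\ed\mathcal L_s-C^{(\varepsilon)}_s$ with the two summands independent'' is misstated ($\mathcal L_s$ and $C^{(\varepsilon)}_s$ are \emph{not} independent; what you want is the identity $\mathcal L^{(\varepsilon)}_s=\mathcal L_s-C^{(\varepsilon)}_s$ together with independence of $\mathcal L^{(\varepsilon)}_s$ and $C^{(\varepsilon)}_s$), and the ``transfer via Fubini'' of the hypothesis from $\mathcal L_t$ to $\mathcal L^{(\varepsilon)}_t$ should be made explicit by using the atom $\p(C^{(\varepsilon)}_t=0)=e^{-\lambda t}>0$, so that $0=\p(G(\mathcal L_t+x)=0)\ge e^{-\lambda t}\,\p(G(\mathcal L^{(\varepsilon)}_t+x)=0)$. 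Your deduction of part~1 of Theorem~\ref{9049} from the lemma matches the paper's.
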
 
\begin{proof} Recall that $t\mapsto \Delta {\mathcal L}_t$ is a 
Poisson point process with characteristic measure $\nu$, where $\nu$ is the L\'evy measure of ${\mathcal L}$. 
Then from the compensation formula for Poisson point processes, for every $\varepsilon>0$, 
\[\e\left(\sum_{t\ge0}\ind_{\{G({\mathcal L}_{t-})=0\}}\ind_{\{\|\Delta {\mathcal L}_t\|>\varepsilon\}}\right)=
\nu(\{x:\|x\|>\varepsilon\})\e\left(\int_0^\infty\ind_{\{G({\mathcal L}_{t})=0\}}\,dt\right).\]
But from our assumption, the last term is 0. 
Then the equality 
\[\p\left(\exists\; t\ge0,\;G({\mathcal L}_{t-})=0,\,\Delta {\mathcal L}_t\neq0\right)=0\] 
in (\ref{9545}) is obtained by taking $\varepsilon$ to 0 and using monotone convergence.\\

We prove the second equality through the same argument by writing
\begin{eqnarray*}
\e\left(\sum_{t\ge0}\ind_{\{G({\mathcal L}_{t-}+\Delta {\mathcal L}_t)=0\}}\ind_{\{\|\Delta 
{\mathcal L}_t\|>\varepsilon\}}\right)
&=&\int_{0}^\infty dt\e\left(\int_{\{\|x\|>\varepsilon\}}\nu(dx)\ind_{\{G({\mathcal L}_{t-}+x)=0\}}\right)\\
&=&\int_{\{\|x\|>\varepsilon\}} \nu(dx)\int_{0}^\infty\p(G({\mathcal L}_{t}+x)=0)\,dt=0
\end{eqnarray*}
and the conclusion follows as for the first equality. 
\end{proof}

Let us now prove the first assertion of Theorem \ref{9049}. First of all, since $f$ is positive and non increasing, 
it is clear that $0<S<\infty$, a.s., so that $Y_{S-}$, $Z_{S-}$ are well defined and $Y_{S-}<\infty$ and 
$Z_{S-}<\infty$, a.s.  Then let us apply Lemma \ref{5698} to the L\'evy process $\mathcal{L}=(Y,Z)$ and the Borel 
function $G(y,z)=f(y)-z$ under the assumption that $\mathcal{L}$ has infinite lifetime. The condition: for all 
$x=(y,z)\in\mathbb{R}^2$, $\p(G(\mathcal{L}_t+x)=0)=\p(f(Y_t+y)=Z_t+z)=0$, 
for $\lambda$-a.e. $t\ge0$ is satisfied since, by assumption, the renewal measure $v(dy,dz)$ of $(Y,Z)$ 
is absolutely continuous. Then part 1.~of Theorem \ref{9049} follows from Lemma \ref{5698} and the equalities 
\begin{eqnarray*}
\p(Z_{S-}=f(Y_{S-}),\Delta(Y,Z)_{S}\neq0)&=&\p(\exists t \ge0,\,Z_{t-}=f(Y_{t-}),\Delta(Y,Z)_t \neq0)\\
\p(Z_{S}=f(Y_{S}),\Delta(Y,Z)_{S}\neq0)&=&\p(\exists t \ge0,\,Z_{t}=f(Y_{t}),\Delta(Y,Z)_t \neq0)\,.
\end{eqnarray*}
Now let $\tilde{\mathcal{L}}=(\tilde{Y},\tilde{Z})$ be a L\'evy process with infinite lifetime such that $\mathcal{L}$ 
is obtained by killing $\tilde{\mathcal{L}}$ at an independent exponential time ${\bf e}$. Denote by $\zeta$ 
the lifetime of ${\mathcal L}$. Then,
\begin{eqnarray*}
\p(Z_{S-}=f(Y_{S-}),\Delta(Y,Z)_{S}\neq0)&=&\p(\exists t \le \zeta,\,Z_{t-}=f(Y_{t-}),\Delta\mathcal{L}_t \neq0)\\
&=&\p(\exists t \le {\bf e},\,\tilde{Z}_{t-}=f(\tilde{Y}_{t-}),\Delta\tilde{\mathcal{L}}_t \neq0)\\
&\le&\p(\tilde{Z}_{S-}=f(\tilde{Y}_{S-}),\Delta(\tilde{Y},\tilde{Z})_{S}\neq0)=0\,.
\end{eqnarray*}
The case of $\p(Z_{S}=f(Y_{S}),\Delta(Y,Z)_{S}\neq0)=0$ is handled in the same way.

\subsection{Preliminary lemmas}\label{7655}
Recall that $(Y,Z)$ is a bivariate subordinator as defined in Subsection \ref{3025}. Its semigroup and its renewal 
measure are respectively denoted by $p_t(dy,dz)$ and $v(dy,dz)$. In all the remainder of this paper we set for all 
$y,z\ge0$,
\[T_y^Y=\inf\{t:Y_t>y\}\;\;\;\mbox{and}\;\;\;T_z^Z=\inf\{t:Z_t>z\}\,.\] 

\begin{lemma}\label{8333} Assume that the renewal measure $v(dy,dz)$ is absolutely continuous on $(0,\infty)^2$ 
and let us denote by $v(y,z)$ its density, then for $y,z\in(0,\infty)$,
\begin{eqnarray}
&&\p(Y_{T_y^Y}=y,\,Z_{T_y^Y}\in dz)\,dy=d_Yv(y,z)\,dy\,dz\label{2621}\\
&&\p(Z_{T_z^Z}=z,\,Y_{T_z^Z}\in dy)\,dz=d_Zv(y,z)\,dy\,dz\,.\label{8945}
\end{eqnarray}
Assume that for all $t>0$, the semigroup $p_t(dy,dz)$ is absolutely continuous on $(0,\infty)^2$ and let us denote 
by $p_t(y,z)$ its densities, then for $t,y,z\in(0,\infty)$,
\begin{eqnarray}
&&\p(Y_{T_y^Y}=y,\,Z_{T_y^Y}\in dz,\,T_y^Y\in dt)\,dy=d_Yp_t(y,z)\,dt\,dy\,dz\label{2622}\\
&&\p(Z_{T_z^Z}=z,\,Y_{T_z^Z}\in dy,\,T_z^Z\in dt)\,dz=d_Zp_t(y,z)\,dt\,dy\,dz\,.\label{8946}
\end{eqnarray}
\end{lemma}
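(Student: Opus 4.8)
The plan is to deduce all four identities from a single pathwise change of variables, based on the fact that a subordinator has no Gaussian component: the continuous part of $Y$ is exactly the deterministic drift $t\mapsto d_Yt$, and likewise that of $Z$ is $t\mapsto d_Zt$.

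First I would isolate a pathwise identity. Fix a trajectory, write $Y_t=d_Yt+J_t$ with $J_t=\sum_{s\le t}\Delta Y_s$, so that the Lebesgue--Stieltjes measure of $t\mapsto Y_t$ splits as $dY_t=d_Y\,dt+dJ_t$, and let $\mathcal R=\{Y_s:0\le s<\zeta\}$ be the range of $Y$. Since the complement of $\mathcal R$ inside $[0,Y_{\zeta-})$ is the union of the open intervals jumped over, of total length $J$, the relation $Y_s-J_s=d_Ys$ gives, for every $a\ge0$,
\[\mathrm{Leb}\big(\mathcal R\cap[0,a]\big)=d_Y\,\mathrm{Leb}\{s<\zeta:Y_s\le a\}\,;\]
equivalently, the image of the drift measure $d_Y\,ds$ under $s\mapsto Y_s$ is Lebesgue measure restricted to $\mathcal R$. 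If $d_Y>0$ then $Y$ is strictly increasing, so $s\mapsto Y_s$ is a bijection of $[0,\zeta)$ onto $\mathcal R$ with inverse $y\mapsto T_y^Y$, and $Y_{T_y^Y}=y$ for every $y\in\mathcal R$, whence $\mathcal R=\{y\ge0:Y_{T_y^Y}=y\}$. If $d_Y=0$ the displayed identity forces $\mathrm{Leb}(\mathcal R)=0$, and $\{y:Y_{T_y^Y}=y\}\subseteq\mathcal R$ is then Lebesgue-null as well. In either case a change of variables along $s\mapsto Y_s$ yields, for every bounded Borel $\Psi\ge0$,
\[d_Y\int_0^{\zeta}\Psi(s,Y_s,Z_s)\,ds=\int_0^{\infty}\Psi\big(T_y^Y,y,Z_{T_y^Y}\big)\,\ind_{\{Y_{T_y^Y}=y\}}\,dy\qquad\text{a.s.}\]

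Next I would take expectations in this identity. By Tonelli and the convention $Y_s=Z_s=\infty$ for $s\ge\zeta$, the expectation of the left-hand side is $d_Y\int_0^{\infty}\iint\Psi(s,y,z)\,p_s(dy,dz)\,ds$. When $\Psi$ does not depend on $s$, this equals $d_Y\iint\Psi(y,z)\,v(dy,dz)=\iint\Psi(y,z)\,d_Y v(y,z)\,dy\,dz$ under the absolute continuity assumption on $v$; when instead each $p_s$ is assumed absolutely continuous, it equals $\int_0^{\infty}\iint\Psi(s,y,z)\,d_Y p_s(y,z)\,dy\,dz\,ds$. Meanwhile Tonelli applied to the right-hand side gives $\int_0^{\infty}\e\big[\Psi(T_y^Y,y,Z_{T_y^Y})\,\ind_{\{Y_{T_y^Y}=y\}}\big]\,dy$. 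Letting $\Psi$ range over $C_c\big((0,\infty)^2\big)$, respectively $C_c\big((0,\infty)^3\big)$, and comparing the two sides identifies $\p(Y_{T_y^Y}=y,\,Z_{T_y^Y}\in dz)\,dy$ with $d_Y v(y,z)\,dy\,dz$ on $(0,\infty)^2$, which is (\ref{2621}), and $\p(Y_{T_y^Y}=y,\,Z_{T_y^Y}\in dz,\,T_y^Y\in dt)\,dy$ with $d_Y p_t(y,z)\,dt\,dy\,dz$, which is (\ref{2622}). The identities (\ref{8945}) and (\ref{8946}) follow by running the same argument with $Y$ and $Z$ interchanged, the continuous part of $Z$ being $t\mapsto d_Zt$.

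The step requiring care is the pathwise identity, and within it the claim that pushing the drift measure $d_Y\,ds$ forward along the path gives exactly $\mathrm{Leb}|_{\mathcal R}$ and that $\mathcal R$ coincides with $\{y:Y_{T_y^Y}=y\}$ when $d_Y>0$. This rests on two elementary structural facts about subordinators: the absence of a Brownian part, so the continuous component of $Y$ is literally the linear drift and the atomic parts of $dY$ and $dJ$ cancel when one integrates a functional of the post-jump state; and the fact that a positive drift forces strict monotonicity, which both rules out sitting at a level and makes the inverse of $Y$ single-valued and equal to the first-passage time. I do not expect Lemma \ref{5698} to be needed here, since the possibility that $Y$ jumps exactly onto the level $y$ is automatically absorbed into the complement of $\mathcal R$; the only point to keep in mind under a finite lifetime $\zeta$ is that all path functionals are read on $[0,\zeta)$, which changes nothing.
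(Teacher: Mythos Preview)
Your proof is correct and follows essentially the same route as the paper: both rest on the change of variables $s\leftrightarrow y=Y_s$ with $dy=d_Y\,ds$ on the range of $Y$, the range being identified with $\{y:Y_{T_y^Y}=y\}$. The paper compresses your pathwise analysis into the single identity $d_YT_y^Y=\int_0^y\ind_{\{Y_{T_s^Y}=s\}}\,ds$ (stated without proof) and then uses exponential test functions (Laplace transforms) rather than your $\Psi\in C_c$, but these are cosmetic rather than structural differences.
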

\begin{proof} Let us prove (\ref{2621}) and (\ref{2622}). Note that the time $T_y^Y$ satisfies 
$d_YT_y^Y=\int_0^y\ind_{\{Y_{T_s^Y}=s\}}\,ds$, for all $y\ge0$.  Hence for $\alpha,\beta,\gamma>0$, 
\begin{eqnarray*}
&&\int_{y=0}^\infty\int_{t,z\in[0,\infty)^2} e^{-\alpha y-\beta z-\gamma t}
\p(Y_{T_y^Y}=y,\,Z_{T_y^Y}\in dz,\,T_y^Y\in dt)\,dy\\
&=&\e\left(\int_0^\infty e^{-\alpha Y_{T_y^Y}-\beta Z_{T^Y_y}-\gamma T_y^Y}d_YdT_y^Y\right)=
\e\left(\int_0^\infty e^{-\alpha Y_t-\beta Z_{t}-\gamma t}d_Ydt\right)\\
&=&\int_{y=0}^\infty\int_{t,z\in[0,\infty)^2} e^{-\alpha y-\beta z-\gamma t}d_Yp_t(dy,dz)\,dt\,,
\end{eqnarray*}
which proves both identities. The proof of (\ref{8945}) and (\ref{8946}) is the same.  
\end{proof}
Let us mention that expressions similar to (\ref{2621}) and (\ref{8945}) have been obtained in 
Section 4 of \cite{gm}, see Theorem 4.2 therein. We also use the latter theorem for the proof of the next Lemma.

\begin{lemma}\label{2833} 
Assume that the renewal measure $v(dy,dz)$ is absolutely continuous on $(0,\infty)^2$ and that its density 
$(y,z)\mapsto v(y,z)$ is locally bounded. Then there are functions $\varepsilon_1$ and $\varepsilon_2$ 
satisfying $\varepsilon_1(h),\varepsilon_2(h)\rightarrow0$, as $h\rightarrow0$ and such that for all 
$h\in(0,u_1-u_0)$ and $u\in[u_0,u_1-h]$, 
\begin{eqnarray*}
&&\p(Y_{T_{u+h}^Y-}\in(u,u+h),\,Z_{T_{u+h}^Y-}\in (f(u+h),f(u)))\le h\varepsilon_1(h)\,,\\\label{2971}
&&\p(Z_{T_{f(u)}^Z-}\in(f(u+h),f(u)),\,Y_{T_{f(u)}^Z-}\in (u,u+h))\le h\varepsilon_2(h)\,.\label{8836}
\end{eqnarray*}
\end{lemma}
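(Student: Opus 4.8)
The plan is to reduce each of the two probabilities to an exact sum of jump indicators and then to apply the compensation (master) formula for the jumps of $(Y,Z)$, exactly the mechanism behind the identities of Lemma~\ref{8333} and of Theorem~4.2 in \cite{gm}. This turns each probability into an explicit integral against the renewal density $v$, after which the estimates will follow from the local boundedness of $v$, the uniform continuity of $f$ on $[u_0,u_1]$, and the integrability near $0$ of the tails of the Lévy measures of $Y$ and of $Z$. Write $\nu$ for the Lévy measure of $(Y,Z)$, let $\nu^Y$ and $\nu^Z$ be its one-dimensional marginals (the Lévy measures of $Y$ and of $Z$) and set $\bar\nu^Y(x)=\nu^Y((x,\infty))$, $\bar\nu^Z(x)=\nu^Z((x,\infty))$. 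Since $Y$ and $Z$ are subordinators, $\int_0^1\bar\nu^Y(x)\,dx=\int_{(0,\infty)}(x\wedge1)\,\nu^Y(dx)<\infty$ and likewise for $\nu^Z$, so that $\int_0^\delta\bar\nu^Y(x)\,dx\to0$ and $\int_0^\delta\bar\nu^Z(x)\,dx\to0$ as $\delta\to0$; the case of a killed $(Y,Z)$ reduces to the non-killed one by passing to the life-extended process as in Subsection~\ref{9790}, up to a harmless term of order $h\,(f(u)-f(u+h))$.

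For the first inequality I would observe that on $\{Y_{T_{u+h}^Y-}\in(u,u+h),\,Z_{T_{u+h}^Y-}\in(f(u+h),f(u))\}$ the time $T_{u+h}^Y$ is a jump time of $Y$ at which $Y$ crosses $u+h$ from a position in $(u,u+h)$; conversely, by monotonicity of $Y$, any jump time $t$ with $Y_{t-}\in(u,u+h)$, $Z_{t-}\in(f(u+h),f(u))$ and $\Delta Y_t>u+h-Y_{t-}$ must equal $T_{u+h}^Y$ and realises that event. Hence its indicator is $\sum_{t>0}\ind\{Y_{t-}\in(u,u+h),\,Z_{t-}\in(f(u+h),f(u)),\,\Delta Y_t>u+h-Y_{t-}\}$; taking expectations, applying the compensation formula for the Poisson random measure of jumps of $(Y,Z)$, then using $Y_{t-}=Y_t$, $Z_{t-}=Z_t$ for Lebesgue-a.e.\ $t$ and the definition and absolute continuity of $v$, one gets
\begin{eqnarray*}
\p\big(Y_{T_{u+h}^Y-}\in(u,u+h),\,Z_{T_{u+h}^Y-}\in(f(u+h),f(u))\big)
&=&\e\Big[\int_0^\infty\ind\{Y_t\in(u,u+h),\,Z_t\in(f(u+h),f(u))\}\,\bar\nu^Y(u+h-Y_t)\,dt\Big]\\
&=&\int_u^{u+h}\int_{f(u+h)}^{f(u)}\bar\nu^Y(u+h-y)\,v(y,z)\,dz\,dy\,.
\end{eqnarray*}
Since $(y,z)$ runs over the compact rectangle $[u_0,u_1]\times[f(u_1),f(u_0)]\subset(0,\infty)^2$, on which $v$ is bounded, by $M$ say, the change of variable $x=u+h-y$ gives the bound $M\,(f(u)-f(u+h))\int_0^h\bar\nu^Y(x)\,dx$. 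The second inequality follows by the symmetric argument, interchanging $Y$ and $Z$ and applying the compensation formula at $T_{f(u)}^Z$: the indicator of $\{Z_{T_{f(u)}^Z-}\in(f(u+h),f(u)),\,Y_{T_{f(u)}^Z-}\in(u,u+h)\}$ is $\sum_{t>0}\ind\{Z_{t-}\in(f(u+h),f(u)),\,Y_{t-}\in(u,u+h),\,\Delta Z_t>f(u)-Z_{t-}\}$, and the same steps give
\begin{eqnarray*}
\p\big(Z_{T_{f(u)}^Z-}\in(f(u+h),f(u)),\,Y_{T_{f(u)}^Z-}\in(u,u+h)\big)
&=&\int_u^{u+h}\int_{f(u+h)}^{f(u)}\bar\nu^Z(f(u)-z)\,v(y,z)\,dz\,dy\\
&\le& M\,h\int_0^{f(u)-f(u+h)}\bar\nu^Z(x)\,dx\,.
\end{eqnarray*}

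To conclude, put $\omega(h):=\sup\{f(u)-f(u+h):u_0\le u\le u_1-h\}$, which tends to $0$ as $h\to0$ by uniform continuity of $f$ on $[u_0,u_1]$. The second bound is then of the stated form with $\varepsilon_2(h):=M\int_0^{\omega(h)}\bar\nu^Z(x)\,dx\to0$, and the first bound is at most $M\,\omega(h)\int_0^h\bar\nu^Y(x)\,dx$. The step I expect to be the main obstacle is to recast this last quantity as $h\,\varepsilon_1(h)$ with $\varepsilon_1(h)\to0$ uniformly in $u$, i.e.\ to balance the modulus of continuity of $f$ against the small-jump activity of $Y$; when $f$ is, say, Lipschitz on $[u_0,u_1]$ one may simply take $\varepsilon_1(h)=M\,\mathrm{Lip}(f)\int_0^h\bar\nu^Y(x)\,dx$. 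The remaining verifications — exactness of the reduction of each event to a sum of jump indicators, and the justification of the compensation formula (including in the killed case) — are routine.
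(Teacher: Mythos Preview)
Your approach coincides with the paper's: both invoke the undershoot formula (the compensation formula, which is exactly what Theorem~4.2 of \cite{gm} delivers for a bivariate subordinator) and then bound the resulting double integral using the local bound $M$ for $v$ on $[u_0,u_1]\times[f(u_1),f(u_0)]$ together with the modulus of continuity $\omega(h)=\sup_{u}(f(u)-f(u+h))$. Your treatment of the second inequality is correct and complete; your $\varepsilon_2(h)=M\int_0^{\omega(h)}\bar\nu^Z$ is in fact sharper than the paper's choice $\bar\pi_Z(f(u_1))\,C\,\omega(h)$.

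The obstacle you flag for the first inequality is a genuine gap, and it cannot be closed in the form the lemma states. Your identity
\[
\p\big(Y_{T_{u+h}^Y-}\in(u,u+h),\,Z_{T_{u+h}^Y-}\in(f(u+h),f(u))\big)
=\int_u^{u+h}\!\!\int_{f(u+h)}^{f(u)} v(y,z)\,\bar\nu^Y(u{+}h{-}y)\,dz\,dy
\]
is the correct undershoot density. The paper writes $\bar\pi_Y(y)$ in place of $\bar\pi_Y(u+h-y)$ and then bounds by the constant $\bar\pi_Y(u_0)$, but $\bar\pi_Y(u+h-y)$ is \emph{not} dominated by $\bar\pi_Y(u_0)$ on $(u,u+h)$: it diverges as $y\uparrow u+h$ whenever $\pi_Y$ is infinite. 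With the correct integrand the tight uniform bound is your $M\,\omega(h)\int_0^h\bar\nu^Y$, and for $Y$ stable of index $\alpha$ close to~$1$ and $f$ merely H\"older of exponent $\beta<\alpha$ this is of order $h^{1-\alpha+\beta}\gg h$, so no $\varepsilon_1(h)\to0$ with $\p(\cdots)\le h\,\varepsilon_1(h)$ can exist. What saves the application in Subsection~\ref{9791} is that only $\sum_k\p(B_{n,k}^{(4)})\to0$ is actually needed there: summing your bound over the partition and telescoping $\sum_k\big(f(u_k^n)-f(u_{k+1}^n)\big)=f(u_0)-f(u_1)$ gives
\[
\sum_{k=0}^{n-1}\p(B_{n,k}^{(4)})\ \le\ M\,(f(u_0)-f(u_1))\int_0^{h_n}\bar\nu^Y(x)\,dx\ \longrightarrow\ 0.
\]
Thus the correct formulation of the first estimate is $\p(\cdots)\le (f(u)-f(u+h))\,\eta_1(h)$ with $\eta_1(h)=M\int_0^h\bar\nu^Y\to0$, which your argument does prove and which suffices for the upper bound in Theorem~\ref{9049}.
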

\begin{proof}
Let us denote by $\pi_Y$ and $\pi_Z$ the L\'evy measures of $Y$ and $Z$ and set $\bar{\pi}_Y(y)=\pi_Y(y,\infty)$ 
and $\bar{\pi}_Z(z)=\pi(z,\infty)$. Recall that $\zeta$ is the lifetime of $(Y,Z)$ and let us denote by 
$q\in[0,\infty)$ its rate. Applying Theorem 4.2 in \cite{gm} we obtain,
\begin{eqnarray}
&&\p(Y_{T_{u+h}^Y-}\in(u,u+h),\,Z_{T_{u+h}^Y-}\in (f(u+h),f(u)),\,T_{u+h}^Y<\zeta)\nonumber\\
&&=\int_{z=f(u+h)}^{f(u)}\int_{y=u}^{u+h}v(y,z)\bar{\pi}_Y(y)\,dydz
\le\bar{\pi}_Y(u_0)Ch\sup_{u\in[u_0,u_1-h]}(f(u)-f(u+h))\,,\label{8466}
\end{eqnarray}
where $C$ is a bound for $v$ on $[u_0,u_1]\times[f(u_1),f(u_0)]$ (recall that $u_0>0$). 
On the other hand,
\begin{eqnarray}
&&\p(Y_{T_{u+h}^Y-}\in(u,u+h),\,Z_{T_{u+h}^Y-}\in (f(u+h),f(u)),\,T_{u+h}^Y=\zeta)\nonumber\\
&&\quad\le\p(Y_{\zeta-}\in(u,u+h),\,Z_{\zeta-}\in (f(u+h),f(u)))\nonumber\\
&&\quad=q\int_{z=f(u+h)}^{f(u)}\int_{y=u}^{u+h}v(y,z)\,dydz\le qCh\sup_{u\in[u_0,u_1-h]}(f(u)-f(u+h))\,.\label{8467}
\end{eqnarray}
Then the first inequality of the statement follows from (\ref{8466}), (\ref{8467}) and the uniform continuity 
of $f$ on $[u_0,u_1]$. 

Similarly, we have from Theorem 4.2 in \cite{gm},
\begin{eqnarray*}
&&\p(Z_{T_{f(u)}^Z-}\in(f(u+h),f(u)),\,Y_{T_{f(u)}^Z-}\in (u,u+h),\,T_{f(u)}^Z<\zeta)\\
&&=\int_{z=f(u+h)}^{f(u)}\int_{y=u}^{u+h}v(y,z)\bar{\pi}_Z(z)\,dydz\le\bar{\pi}_Z(f(u_1))
Ch\sup_{u\in[u_0,u_1-h]}(f(u)-f(u+h))\,,
\end{eqnarray*}
and the same conclusion follows from the same argument.
\end{proof}

\subsection{Proof of the lower bound in identities (\ref{7229}) and (\ref{6366})}\label{8590}
Let us first prove the lower bound in equation (\ref{6366}), that is the inequality, 
\begin{eqnarray}
&&\p(Z_{S}=f(Y_S),\,u_0<Y_{S}<u_1,\,t_0<S<t_1)\geqslant\nonumber\\
&&\qquad\qquad d_Z\int_{u_0}^{u_1}\int_{t_0}^{t_1}p_t(u,f(u))\,dt \, du-d_Y\int_{u_0}^{u_1}\label{6311}
\int_{t_0}^{t_1}p_t(u,f(u))\,dt\, df(u)\,.
\end{eqnarray}
We define the sequence $(\sigma_n)_{n\ge1}$ of subdivisions of $[u_0,u_1]$, 
\begin{equation}\label{6392}
\sigma_n=\left\lbrace u_k^{n}=u_0 +\frac{k}{n} (u_1-u_0), \ 0 \leqslant k \leqslant n \right\rbrace,
\;\;\;n\ge1\,,
\end{equation}
and we set, for all $n\ge1$, $\displaystyle B_n=\bigcup_{k=0}^{n-1} B_{n,k}^{(1)} \cup B_{n,k}^{(2)}$, 
where
\begin{eqnarray}
B_{n,k}^{(1)}&=&\left\lbrace Z_{T_{f(u_k^n)}^Z}=f(u_k^n), u_k^n \leqslant Y_{T_{f(u_k^n)}^Z}<u_{k+1}^n,\, 
t_0<T_{f(u_k^n)}^Z<t_1\right\rbrace\label{3722}\\
B_{n,k}^{(2)}&=&\left\lbrace Y_{T_{u_{k+1}^n}^Y}=u_{k+1}^n, f(u_{k+1}^n)\leqslant Z_{T_{u_{k+1}^n}^Y}<
f(u_k^n),\, t_0<T_{u_{k+1}^n}^Y<t_1\right\rbrace\,.\label{9226}
\end{eqnarray}
Then let us check that 
\begin{equation}\label{7366}
\limsup_n B_n \subset \lbrace Z_{S}=f(Y_S),\,u_0\le Y_{S}\le u_1,\,t_0\le S\le t_1\rbrace\,, 
\;\;\;\mbox{almost surely}\,.
\end{equation}
Let us first note that 
\begin{eqnarray*}
&&\omega\in B_{n,k}^{(1)}\Rightarrow\|(Y_{T_{f(u_k^n)}^Z},Z_{T_{f(u_k^n)}^Z})-(u_k^n,f(u_k^n))\|\le 
u_{k+1}^n-u_{k}^n=\frac{u_1-u_0}n\;\;\;\mbox{and}\\
&&\omega\in B_{n,k}^{(2)}\Rightarrow\|(Y_{T_{u_{k+1}^n}^Y},Z_{T_{u_{k+1}^n}^Y})-(u_{k+1}^n,
f(u_{k+1}^n))\|\le f(u_k^n)-f(u_{k+1}^n)\le\varepsilon(n)\,,
\end{eqnarray*}
where in the last inequality, the mapping $n\mapsto\varepsilon(n)$ does not depend on $k$ and satisfies
$\lim_{n\rightarrow\infty}\varepsilon(n)=0$. This follows from the uniform continuity of $f$ on $[u_0,u_1]$. 
Therefore if $\omega\in\limsup_n B_n$, then there is a subsequence $u_{j_n}^{i_n}(\omega):=u_n$ of 
$(u_k^n)_{n\ge1,\,0\le k\le n}$ which converges to $u\in[u_0,u_1]$ and such that 
$(Y_{T_{f(u_n)}^Z},Z_{T_{f(u_n)}^Z})$ converges to $(u,f(u))$ or $(Y_{T_{u_n}^Y},Z_{T_{u_n}^Y})$ 
converges to $(u,f(u))$. Assume for instance that $(Y_{T_{f(u_n)}^Z},Z_{T_{f(u_n)}^Z})$ converges to
$(u,f(u))$. Since $Y$ and $Z$ are non decreasing, this means that $T_{f(u_n)}^Z$ tends to some value $S$ 
such that $(Y_{S-},Z_{S-})=(u,f(u))$. Moreover $t_0\le S\le t_1$ and $u_0\le Y_{S-}\le u_1$. In particular, 
$Z_{S-}=f(Y_{S-})$, so that $S=\inf\{t:Z_t\ge f(Y_t)\}$, since $Y$ and $Z$ are non decreasing. 
If $(Y_{T_{u_n}^Y},Z_{T_{u_n}^Y})$ converges to $(u,f(u))$, then the same argument leads to the same 
conclusion. Then (\ref{7366}) follows from part 1.~of Theorem \ref{9049}.

Now, let us prove that
\begin{equation}\label{lim_1}
\lim_{n\rightarrow+\infty}\p(B_n)=
d_Z\int_{u_0}^{u_1}\int_{t_0}^{t_1}p_t(u,f(u))\,dt \, du-d_Y\int_{u_0}^{u_1}
\int_{t_0}^{t_1}p_t(u,f(u))\,dt\, df(u)\,.
\end{equation}
\noindent We first notice that, for all $n \in \N$ and all integer 
$0\leqslant k \leqslant n-1$, $B_{n,k}^{(1)} \cap B_{n,k}^{(2)}= \emptyset$. Moreover, $\sigma_n$ being a 
subdivision of $[u_0,u_1]$, for all integers $n\ge2$, $k'$, $k''$ such that $k'\neq k''$ and 
$0\leqslant k',k''\leqslant n-1$,  
$(B_{n,k'}^{(1)}\cup B_{n,k'}^{(2)})\cap(B_{n,k''}^{(1)}\cup B_{n,k''}^{(2)})=\emptyset$. On the other hand, 
from $(\ref{2622})$ and $(\ref{8946})$ in Lemma \ref{8333}, we obtain  
\[\p \left( B_{n,k}^{(1)} \right)=d_Z\int_{u_k^n}^{u_{k+1}^n}\int_{t_0}^{t_1}p_t(u,f(u_k^n))\,dt\,du\;\;\;
\mbox{and}\;\;\;
\p\left( B_{n,k}^{(2)} \right)=d_Y\int_{f(u_{k+1}^n)}^{f(u_k^n)}\int_{t_0}^{t_1}p_t(u_{k+1}^n,s)\,dt\,ds\,,\]
so that
\begin{eqnarray*}
\p(B_n)&=&\sum_{k=0}^{n-1} \left[\p\left( B_{n,k}^{(1)} \right) + \p \left( B_{n,k}^{(2)} \right) \right]\\
&=&d_Z\sum_{k=0}^{n-1}\int_{u_k^n}^{u_{k+1}^n}\int_{t_0}^{t_1}p_t(u,f(u_k^n))\,dt\,du+
d_Y\sum_{k=0}^{n-1}\int_{f(u_{k+1}^n)}^{f(u_k^n)}\int_{t_0}^{t_1}p_t(u_{k+1}^n,s)\,dt\,ds\,.
\end{eqnarray*}
Recall that $0<t_0<t_1<\infty$ and $0<u_0<u_1<\infty$ and assume first that $f(u_1)<f(u_0)$, 
in particular $0<f(u_1)<f(u_0)<\infty$. 
Then we derive from the continuity of $(t,y,z)\mapsto p_t(y,z)$ on 
$[t_0,t_1]\times[u_0,u_1]\times[f(u_1),f(u_0)]$ that the mapping $(y,z)\mapsto \int_{t_0}^{t_1}p_t(y,z)\,dt$  
is uniformly continuous on $[u_0,u_1]\times[f(u_1),f(u_0)]$. Set $\bar{v}(y,z):=\int_{t_0}^{t_1}p_t(y,z)\,dt$. 
We derive from the uniform continuity of $(y,z)\mapsto\bar{v}(y,z)$ on $[u_0,u_1]\times[f(u_1),f(u_0)]$ and 
this of the mapping $u\mapsto f(u)$ on $[u_0,u_1]$ that there are functions $\alpha(n),\,\beta(n)\rightarrow0$, 
as $n\rightarrow\infty$, such that for all $0\le k\le n-1$, 
$|\bar{v}(u,f(u_k^n))-\bar{v}(u_k^n,f(u_k^n))|\le\alpha(n)$, 
for all $u\in[u_k^n,u_{k+1}^n]$ and $|\bar{v}(u_{k+1}^n,s)-\bar{v}(u_{k+1}^n,f(u_{k+1}^n))|\le\beta(n)$, for 
all $s\in[f(u_{k+1}^n),f(u_k^n)]$. These arguments and the Riemann-Stieltjes integrability of the function 
$u\mapsto\bar{v}(u,f(u))$ with respect to the measures $du$ and $df(u)$ imply that 
\[\lim_{n\rightarrow\infty}\sum_{k=0}^{n-1}\int_{u_k^n}^{u_{k+1}^n} \bar{v}(u,f(u_k^n))\,du=
\lim_{n\rightarrow\infty}\sum_{k=0}^{n-1}\bar{v}(u_k^n,f(u_k^n))(u_{k+1}^n-u_k^n)=\int_{u_0}^{u_1}\bar{v}(u,f(u)) 
\,du\,,\]
and that
\begin{eqnarray}
\lim_{n\rightarrow\infty}\sum_{k=0}^{n-1}\int_{f(u_{k+1}^n)}^{f(u_k^n)} \bar{v}(u_{k+1}^n,s)\,ds&=&
\lim_{n\rightarrow\infty}\sum_{k=0}^{n-1}\bar{v}(u_{k+1}^n,f(u_{k+1}^n))(f(u_k^n)-f(u_{k+1}^n))\nonumber\\
&=&-\int_{u_0}^{u_1}\bar{v}(u,f(u))\,df(u)\,,\label{2689}
\end{eqnarray}
which proves (\ref{lim_1}) in this case. If $f(u_0)=f(u_1):=w$, then the result follows from the same arguments.
We only need to replace the mapping $(y,z)\mapsto\int_{t_0}^{t_1}p_t(y,z)\,dt$ by the mapping 
$y\mapsto\int_{t_0}^{t_1}p_t(y,w)\,dt$ and  the function $u\mapsto\bar{v}(u,f(u))$ by  the function 
$u\mapsto\bar{v}(u,w)$. (The term (\ref{2689}) is then clearly equal to 0.) Finally, recall that 
$\displaystyle \limsup_{n \longrightarrow \infty} B_n \subset 
\lbrace Z_{S}=f(Y_S),\,u_0\le Y_{S}\le u_1,\,t_0\le S\le t_1\rbrace$, a.s. Then by Fatou's lemma, 
\begin{equation*}
\mathbb{P}(Z_{S}=f(Y_S),\,u_0\le Y_{S}\le u_1,\,t_0\le S\le t_1)\geqslant 
\p(\limsup_{n \longrightarrow \infty} B_n)\geqslant \lim_{n \longrightarrow \infty} \p(B_n)\,, 
\end{equation*}
and (\ref{lim_1}) implies that
\begin{eqnarray*}
&&\p(Z_{S}=f(Y_S),\,u_0\le Y_{S}\le u_1,\,t_0\le S\le t_1)\geqslant\nonumber\\
&&\qquad\qquad d_Z\int_{u_0}^{u_1}\int_{t_0}^{t_1}p_t(u,f(u))\,dt \, du-d_Y\int_{u_0}^{u_1}
\int_{t_0}^{t_1}p_t(u,f(u))\,dt\, df(u)\,.
\end{eqnarray*}
Since the latter inequality is valid for all $u_0$, $u_1$, $t_0$ and $t_1$, large inequalities in the
left member can be replaced by strict inequalities, so that (\ref{6311}) holds.

The proof of the corresponding inequality in (\ref{7229}) uses $(\ref{2621})$ and $(\ref{8945})$ instead 
of $(\ref{2622})$ and $(\ref{8946})$ in the above arguments. It is actually simpler so it is omitted.

\subsection{Proof of the upper bound in identities (\ref{7229}) and (\ref{6366})}\label{9791}
Let us first prove the upper bound in equation (\ref{6366}), that is the inequality
\begin{eqnarray}
&&\p(Z_{S}=f(Y_S),\,u_0<Y_{S}<u_1,\,t_0<S<t_1)\leqslant\nonumber\\
&&\qquad\qquad d_Z\int_{u_0}^{u_1}\int_{t_0}^{t_1}p_t(u,f(u))\,dt \, du-d_Y\int_{u_0}^{u_1}\label{6293}
\int_{t_0}^{t_1}p_t(u,f(u))\,dt\, df(u)\,.
\end{eqnarray}
Recall the definition (\ref{6392}) of the sequence $(\sigma_n)_{n\ge1}$ 
of subdivisions of $[u_0,u_1]$. We define the two following sequences of events which are slightly 
different from those introduced in (\ref{3722}) and (\ref{9226}), 
\begin{eqnarray*}
\overline{B}_{n,k}^{(1)}&=&\left\lbrace Z_{T_{f(u_k^n)}^Z}=f(u_k^n), u_k^n \leqslant 
Y_{T_{f(u_k^n)}^Z}\leqslant u_{k+1}^n,\, t_0\leqslant T_{f(u_k^n)}^Z\leqslant t_1+o_1(n)\right\rbrace\\
\overline{B}_{n,k}^{(2)}&=&\left\lbrace Y_{T_{u_{k+1}^n}^Y}=u_{k+1}^n, f(u_{k+1}^n)\leqslant 
Z_{T_{u_{k+1}^n}^Y}\leqslant f(u_k^n),\, t_0\leqslant T_{u_{k+1}^n}^Y\leqslant t_1+o_2(n)\right\rbrace\,,
\end{eqnarray*}
where $o_1(n)=d_Z^{-1}\max_{0\le k\le n-1}(f(u_{k}^n)-f(u_{k+1}^n))$, if $d_Z>0$ and $o_2(n)=(nd_Y)^{-1}$, 
if $d_Y>0$. If $d_Y=0$ (resp.~$d_Z=0$), we set $o_2(n)=\infty$ (resp.~$o_1(n)=\infty$), for all $n\ge1$. 
Note that from uniform continuity of $f$ on $[u_0,u_1]$, if $d_Z>0$, then $o_1(n)\rightarrow0$, as 
$n\rightarrow\infty$ (which is also clearly verified for $o_2$, when $d_Y>0$). Then let us also define the 
sequences,
\begin{eqnarray*}
B_{n,k}^{(3)}&=&\left\lbrace f(u_{k+1}^n)\leqslant Z_{T_{f(u_k^n)}^Z-}\leqslant f(u_k^n),\,
u_k^n\leqslant Y_{T_{f(u_k^n)}^Z-}\leqslant u_{k+1}^n\right\rbrace\\
B_{n,k}^{(4)}&=&\left\lbrace u_k^n \leqslant Y_{T_{u_{k+1}^n}^Y-}\leqslant u_{k+1}^n,\,
f(u_{k+1}^n)\leqslant Z_{T_{u_{k+1}^n}^Y-}\leqslant f(u_k^n)\right\rbrace\,,
\end{eqnarray*}
for $n\ge1$ and $k=0,\dots,n-1$ and let us check the inclusion,
\begin{equation}\label{9690}
A:=\{Z_{S}=f(Y_S),\,u_k^n\leqslant Y_S< u_{k+1}^n,\,t_0<S<t_1\}\subset \overline{B}_{n,k}^{(1)}\cup 
\overline{B}_{n,k}^{(2)}\cup B_{n,k}^{(3)}\cup B_{n,k}^{(4)}\,.
\end{equation}
Assume that $A$ holds and note that $T_{f(u_k^n)}^Z\ge S$ and $T_{u_{k+1}^n}^Y\ge S$. Then, only the
two events described below in $(a)$ and $(b)$ can occur: 

$(a)$ In the first case, either $Z$ creeps through the level $f(u_k^n)$ before time $T_{u_{k+1}^n}^Y$ or $Y$ 
creeps through the level $u_{k+1}^n$ before time $T_{f(u_k^n)}^Z$. 

Let us consider the subcase where $Z$ creeps through the level $f(u_k^n)$ before time  $T_{u_{k+1}^n}^Y$. 
Then we have $Z_{T_{f(u_k^n)}^Z}=f(u_k^n)$ and since $u_k^n\leqslant Y_S< u_{k+1}^n$ and 
$S\le T_{f(u_k^n)}^Z\le T_{u_{k+1}^n}^Y$, we also have $u_k^n \leqslant Y_{T_{f(u_k^n)}^Z}\leqslant u_{k+1}^n$. 
Moreover, recall that $S\ge t_0$, therefore $T_{f(u_k^n)}^Z\ge t_0$. On the other hand since $Z_{S}=f(Y_S)$,
then from time $S$, the process $Z$ will reach the level $f(u_k^n)$ in a time which is at most equal to 
$d_Z^{-1}(f(u_{k}^n)-f(Y_S))$. Hence $T_{f(u_k^n)}^Z\le S+d_Z^{-1}(f(u_{k}^n)-f(Y_S))\le 
t_1+o_1(n)$. We conclude that $\overline{B}_{n,k}^{(1)}$ holds in this subcase. 

In the subcase where $Y$ creeps through the level $u_{k+1}^n$ before time $T_{f(u_k^n)}^Z$, we prove in
the same way that $\overline{B}_{n,k}^{(2)}$ holds.

$(b)$ In the second case, either $Z$ jumps through the level $f(u_k^n)$ and the event $B_{n,k}^{(3)}$ holds 
or $Y$ jumps through the level $u_{k+1}^n$ and the event $B_{n,k}^{(4)}$ holds. Note the both events can occur 
at the same time.

Then the inclusion (\ref{9690}) is proved.

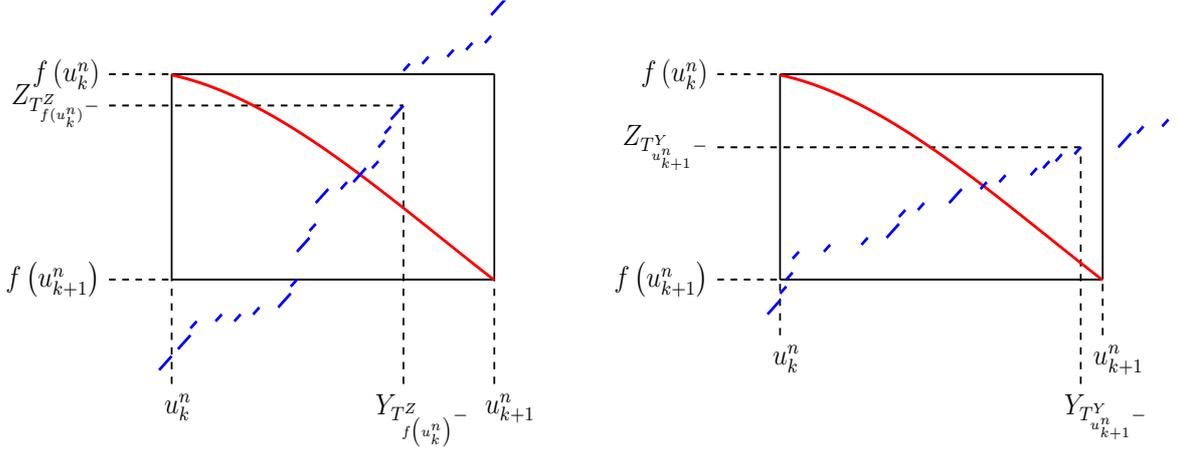
\begin{figure}[!h]

  \begin{minipage}[b]{0.45\linewidth}
   \centering
  \resizebox{10cm}{6cm}{
 \begin{tikzpicture}[xscale=1,yscale=1]
			\newcommand{\xmin}{-2}				
			\newcommand{\xmax}{10}
			\newcommand{\ymin}{-0.5}
			\newcommand{\ymax}{6}
			\clip (\xmin,\ymin) rectangle (\xmax,\ymax);
			\draw[thick](1,4.95)--(1,2);
			\draw[thick](1,2)--(6.15,2);
			\draw[thick](6.15,2)--(6.15,4.95);
			\draw[thick](6.15,4.95)--(1,4.95);
			\draw[dashed, thick](1,0.5)--(1,2);
			\draw[dashed, thick](6.15,0.5)--(6.15,2);
			\draw[dashed, thick](0,2)--(1,2);
			\draw[dashed, thick](0,4.95)--(1,4.95);
			\draw[dashed, thick](0,4.5)--(4.7,4.5);
			\draw[dashed, thick](4.7,0.5)--(4.7,4.5);
			\draw(1.1,0.5)node[below]{$u_k^n$};	
			\draw(6.4,0.5)node[below]{$u_{k+1}^n$};
			\draw(0,2)node[left]{$f\left( u_{k+1}^n \right)$};			
			\draw(0,4.95)node[left]{$f\left( u_{k}^n \right)$};				
			\draw(5,0.5)node[below]{$Y_{T_{f\left( u_{k}^n \right)}^Z-}$};			
			\draw(0,4.5)node[left]{$Z_{T_{f(u_k^n)}^Z-}$};	
			\draw[red, domain=1:6.15, samples=200,very thick] plot (\x, {0.0095*\x*\x*\x-0.151*\x*\x+0.08*\x+5}); 
			\draw[blue, very thick] plot file {coord_B3.txt};
	\end{tikzpicture}}   
  \end{minipage}
\hfill
  \begin{minipage}[b]{0.5\linewidth}
  \centering
 \resizebox{10cm}{6cm}{
\begin{tikzpicture}[xscale=1,yscale=1]
			\newcommand{\xmin}{-2}				
			\newcommand{\xmax}{10}
			\newcommand{\ymin}{-0.5}
			\newcommand{\ymax}{6}
			\clip (\xmin,\ymin) rectangle (\xmax,\ymax);
			\draw[thick](1,4.95)--(1,2);
			\draw[thick](1,2)--(6.15,2);
			\draw[thick](6.15,2)--(6.15,4.95);
			\draw[thick](6.15,4.95)--(1,4.95);
			\draw[dashed, thick](1,1.2)--(1,2);
			\draw[dashed, thick](6.15,1.2)--(6.15,2);
			\draw[dashed, thick](0,2)--(1,2);
			\draw[dashed, thick](0,4.95)--(1,4.95);
			\draw[dashed, thick](0,3.9)--(5.8,3.9);
			\draw[dashed, thick](5.8,0.5)--(5.8,3.9);
			\draw(1.1,1.2)node[below]{$u_k^n$};	
			\draw(6.4,1.2)node[below]{$u_{k+1}^n$};
			\draw(0,2)node[left]{$f\left( u_{k+1}^n \right)$};			
			\draw(0,4.95)node[left]{$f\left( u_{k}^n \right)$};				
			\draw(6.2,0.5)node[below]{$Y_{T_{ u_{k+1}^n}^Y-}$};			
			\draw(0,3.9)node[left]{$Z_{T_{u_{k+1}^n}^Y-}$};	
			\draw[red, domain=1:6.15, samples=200,very thick] plot (\x, {0.0095*\x*\x*\x-0.151*\x*\x+0.08*\x+5}); 
			\draw[blue, very thick] plot file {coord_B4.txt};
	\end{tikzpicture}}    
  \end{minipage}

	\caption{An occurrence of the event $B_{n,k}^{(3)}$ on the left and $B_{n,k}^{(4)}$ on the right}
	\label{evt_B3}
\end{figure}

Let us set $v_1^{(n)}(y,z):=\int_{t_0}^{t_1+o_1(n)}p_t(y,z)\,dt$ and 
$v_2^{(n)}(y,z):=\int_{t_0}^{t_1+o_2(n)}p_t(y,z)\,dt$. 
Then the inclusion (\ref{9690}) implies that 
\begin{eqnarray*}
&&\p(Z_{S}=f(Y_S),\,u_0<Y_{S}<u_1,\,t_0<S<t_1)\\
&\leqslant&\sum_{k=0}^{n-1}\p(Z_{S}=f(Y_S),\,u_k^n\le Y_{S}\le u_{k+1}^{n},\,t_0<S<t_1)\\
&\leqslant&\sum_{k=0}^{n-1}\p(\overline{B}_{n,k}^{(1)})+\sum_{k=0}^{n-1}\p(\overline{B}_{n,k}^{(2)})+\sum_{k=0}^{n-1}
\p(B_{n,k}^{(3)})+\sum_{k=0}^{n-1}\p(B_{n,k}^{(4)})\,,
\end{eqnarray*}
and Lemmas \ref{8333} and \ref{2833} imply that
\begin{eqnarray*}
&&\sum_{k=0}^{n-1}\p(\overline{B}_{n,k}^{(1)})+\sum_{k=0}^{n-1}\p(\overline{B}_{n,k}^{(2)})+
\sum_{k=0}^{n-1}\p(B_{n,k}^{(3)})+\sum_{k=0}^{n-1}\p(B_{n,k}^{(4)})\\
&\leqslant&\sum_{k=0}^{n-1}  d_Z\int_{u_k^n}^{u_{k+1}^n} v_1^{(n)}(u,f(u_k^n))\,du + \sum_{k=0}^{n-1} 
d_Y\int_{f(u_{k+1}^n)}^{f(u_k^n)} v_2^{(n)}(u_{k+1}^n,s)\,ds \\
&&+\sum_{k=0}^{n-1}\frac1n(\varepsilon_1(n)+\varepsilon_2(n))\,.
\end{eqnarray*}
We prove in a similar way as in the previous subsection that the sum of the two first sums in the last
expression tends to $d_Z\int_{u_0}^{u_1}v(u,f(u)) \, du-d_Y\int_{u_0}^{u_1}v(u,f(u))\, df(u)$, when $n$ tend 
to $\infty$. The third sum tends clearly to 0, as $n$ tends to $\infty$, so that inequality (\ref{6293}) is 
proved. 

The upper bound in equality (\ref{7229}) is proved in the same way. This achieves the proof of 
Theorem \ref{9049}.

\subsection{Proof of Proposition \ref{8983}}

Part 1.~is proved as part 1.~of Theorem \ref{9049}, see Subsection \ref{9790}. (Note that $\p(S>0)=1$ since 
$\lim_{t\rightarrow0+}f(t)>0$. In particular, $Y_{S-}$ and $Z_{S-}$ are well defined.)

The proof of part 2.~follows very similar arguments as those used for the proof of the 
upper bound in identities (\ref{7229}) and (\ref{6366}) which is given in Subsection \ref{9791}. We only describe 
it here in broad outline.
Recall the sequence $(\sigma_n)_{n\ge1}$ of subdivisions of $[u_0,u_1]$ defined in (\ref{6392}). Then define the two 
following sequences of events
\begin{eqnarray*}
C_{n,k}^{(1)}&=&\left\lbrace Z_{T_{f(u_{k+1}^n)}^Z}=f(u_{k+1}^n), u_k^n \leqslant 
Y_{T_{f(u_{k+1}^n)}^Z}\leqslant u_{k+1}^n\right\rbrace\\
C_{n,k}^{(2)}&=&\left\lbrace f(u_k^n)\leqslant Z_{T_{f(u_{k+1}^n)}^Z-}<f(u_{k+1}^n),\,
u_k^n\leqslant Y_{T_{f(u_{k+1}^n)}^Z-}<u_{k+1}^n\right\rbrace\,.
\end{eqnarray*}
Since $f$ is non decreasing, the inclusion,
\begin{equation}\label{7453}
C:=\{Z_{S}=f(Y_S),\,u_k^n\leqslant Y_S< u_{k+1}^n\}\subset C_{n,k}^{(1)}\cup C_{n,k}^{(2)}
\end{equation}
holds for all $n$ sufficiently large and $k=0,\dots,n-1$. Indeed, by definition of $S$, there is $\varepsilon>0$
such that the path of $(Y,Z)$ stays above the graph of $f$ on the interval $[S,S+\varepsilon]$. This implies that 
for all $n$ sufficiently large, if $t\ge0$ is such that $Y_{S+t}\in [u_{k}^n,u_{k+1}^n]$ for some $k=0,1,\dots,n-1$, 
then $Z_{S+t}\ge f(S+t)$ so that either the process $Z$ creeps above the level $f(u_{k+1}^n)$, that is 
$C_{n,k}^{(1)}$ holds, or $Z$ jumps above this level, that is $C_{n,k}^{(2)}$ holds. There is no other possibility.

The remainder of the proof is then very similar to the arguments of the end of Subsections \ref{9790} and \ref{9791}, 
the main difference being that the term due to the creeping of $Y$ is now absent. 

\subsection{On assumptions of Theorem \ref{9049}}\label{3331}
The aim of this subsection is to show that Theorem \ref{9049} holds whenever the transition
semigroup of $(Y,Z)$ is absolutely continuous with bounded densities.  Let $d\ge1$ and let ${\mathcal L}$ be any 
possibly killed $\mathbb{R}^d$ valued L\'evy process. 

\begin{proposition}\label{1823} Let $\Psi$ be the characteristic exponent of ${\mathcal L}$, that is 
$\e(e^{i\langle\xi,{\mathcal L}_1\rangle},1<\zeta)=e^{-\Psi(\xi)}$, $\xi\in\mathbb{R}^d$, where $\zeta$ is the 
lifetime of ${\mathcal L}$. Then the following assertions are equivalent:
\begin{itemize}
\item[$(i)$] The transition semigroup of ${\mathcal L}$ is absolutely continuous and there is a version of its 
densities, denoted by $x\longmapsto l_t(x)$, $x \in \R^d$, which are bounded for all $t>0$.
\item[$(ii)$] For all $ t > 0$,
\[e^{-t\Psi(\xi)} \in \mathbf{L}^1(\R^d)\,.\]
\end{itemize}
If these conditions are satisfied, then the function $(t,x)\mapsto l_t(x)$ is continuous on 
$(0,\infty)\times\mathbb{R}^d$. In particular, if all coordinates of ${\mathcal L}$ are subordinators, then 
$l_t(0)=0$, for all $t>0$.
\end{proposition}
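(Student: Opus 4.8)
The plan is to prove the equivalence $(i)\Leftrightarrow(ii)$ by the standard Fourier-inversion route, treating the killed case by factoring out the lifetime. First I would reduce to the conservative case: if ${\mathcal L}$ is killed at rate $q\ge0$ (independent exponential time), then its transition subprobabilities satisfy $\p({\mathcal L}_t\in dx,\,t<\zeta)=e^{-qt}\p(\widetilde{\mathcal L}_t\in dx)$ for the associated conservative process $\widetilde{\mathcal L}$, and $\Psi(\xi)=q+\widetilde\Psi(\xi)$. Since $e^{-t\Psi(\xi)}=e^{-qt}e^{-t\widetilde\Psi(\xi)}$, condition $(ii)$ is unchanged and a bounded continuous density for $\widetilde{\mathcal L}_t$ gives one for ${\mathcal L}$ (multiply by $e^{-qt}$), so it suffices to argue for a genuine $\mathbb R^d$-valued L\'evy process.

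For $(ii)\Rightarrow(i)$: assuming $e^{-t\Psi(\cdot)}\in\mathbf L^1(\R^d)$ for every $t>0$, define
\[
l_t(x)=\frac1{(2\pi)^d}\int_{\R^d}e^{-i\langle\xi,x\rangle}e^{-t\Psi(\xi)}\,d\xi\,.
\]
By Fourier inversion, $l_t$ is a bounded (by $(2\pi)^{-d}\|e^{-t\Psi}\|_{\mathbf L^1}$), continuous version of the density of ${\mathcal L}_t$; joint continuity of $(t,x)\mapsto l_t(x)$ on $(0,\infty)\times\R^d$ follows from dominated convergence, using that for $t$ in a compact subinterval $[a,b]\subset(0,\infty)$ one has $|e^{-t\Psi(\xi)}|=e^{-t\,\mathrm{Re}\,\Psi(\xi)}\le \max\{1,e^{-(b-a)\mathrm{Re}\,\Psi(\xi)}\}\cdot e^{-a\,\mathrm{Re}\,\Psi(\xi)}$, which is integrable once we know $e^{-a\Psi}\in\mathbf L^1$ (the $\mathrm{Re}\,\Psi\ge0$ part is harmless and the negative part is controlled since $\mathrm{Re}\,\Psi$ is bounded below). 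The semigroup property and the fact that $\widehat{\mu_t}(\xi)=e^{-t\Psi(\xi)}$ identify $l_t$ as the transition density.

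For $(i)\Rightarrow(ii)$: suppose $\mathcal L_t$ has a bounded density $l_t$ for all $t>0$. Fix $t>0$ and write $t=2s$. Then $l_t=l_s*l_s$, so $\widehat{l_t}=\widehat{l_s}^{\,2}$, i.e. $e^{-t\Psi(\xi)}=\bigl(e^{-s\Psi(\xi)}\bigr)^2=|\widehat{l_s}(\xi)|^2$ if we also use the density of the dual process, or more simply: $l_s\in\mathbf L^1\cap\mathbf L^\infty\subset\mathbf L^2$, hence $\widehat{l_s}=e^{-s\Psi}\in\mathbf L^2$ by Plancherel, and therefore $e^{-t\Psi}=e^{-s\Psi}\cdot e^{-s\Psi}\in\mathbf L^1$ as a product of two $\mathbf L^2$ functions (Cauchy--Schwarz). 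Since $t>0$ was arbitrary, $(ii)$ holds. I expect this direction to be the cleaner one; the main technical care is in $(ii)\Rightarrow(i)$ with the joint continuity and the killed-process bookkeeping.

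Finally, the last assertion: if all coordinates of ${\mathcal L}$ are subordinators, then ${\mathcal L}_t\in[0,\infty)^d$ a.s., so the density $l_t$, being continuous and supported in $[0,\infty)^d$, must vanish on the boundary; in particular $l_t(0)=0$ for every $t>0$. (Concretely, continuity at $0$ together with $l_t\equiv0$ on the open set $\R^d\setminus[0,\infty)^d$, which has $0$ in its closure, forces $l_t(0)=\lim l_t(x_n)=0$ along any sequence $x_n\to0$ with $x_n\notin[0,\infty)^d$.) The only point requiring a word of care is that a priori continuity is asserted on $(0,\infty)\times\R^d$, which does include the point $x=0$, so the limit argument is legitimate.
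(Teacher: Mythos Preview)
Your argument is correct and follows essentially the same Fourier-analytic route as the paper: $(i)\Rightarrow(ii)$ via $l_s\in\mathbf L^1\cap\mathbf L^\infty\subset\mathbf L^2$ and Plancherel, then $e^{-t\Psi}=(e^{-s\Psi})^2\in\mathbf L^1$; $(ii)\Rightarrow(i)$ via Fourier inversion and dominated convergence for joint continuity. One small simplification: since $|e^{-t\Psi(\xi)}|\le 1$ always, one has $\mathrm{Re}\,\Psi(\xi)\ge 0$ for every $\xi$, so on $[a,b]\subset(0,\infty)$ the clean bound $|e^{-t\Psi(\xi)}|\le e^{-a\,\mathrm{Re}\,\Psi(\xi)}=|e^{-a\Psi(\xi)}|\in\mathbf L^1$ suffices and your $\max$ factor (and the remark about a ``negative part'') is unnecessary.
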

\begin{proof}
The proof follows similar arguments as those used in the beginning of Section 2 in \cite{cm}.
Boundness of $l_t$ implies that $l_t\in \mathbf{L}^2(\R^d)$ and 
consequently $e^{-t\Psi(\xi)} \in \mathbf{L}^2(\R^d)$, for all $t>0$, which implies that $e^{-t\Psi(\xi)} \in 
\mathbf{L}^1(\R^d)$, for all $t>0$. Conversely, if $e^{-t\Psi(\xi)} \in \mathbf{L}^1(\R^d)$, for all $ t > 0$, then 
Fourier inversion theorem implies $(i)$. Indeed, by the Riemann-Lebesgue lemma, $l_t \in \mathcal{C}_0(\R^d)$. 
Furthermore, from Fourier inversion theorem we have, for all $(t,x) \in (0,\infty) \times \R^d$, 
\begin{equation}\label{1581}
l_t(x)=\frac{1}{2\pi} \int_{\R^d} e^{-i\langle x,\xi\rangle} e^{-t\Psi(\xi)} d\xi\,.
\end{equation}
Let $(t_0,x_0) \in (0,\infty) \times \R^d$ and $K$ be a compact neighborhood of $(t_0,x_0)$ such that $K\subset  
(0,\infty)\times\R^d$.  Then, there exists $t_1>0$ such that for all $(t,x,\xi) \in K \times \R^d$, 
$|e^{-i\langle x,\xi\rangle}e^{-t\Psi(\xi)}\vert=e^{-t\mbox{\tiny Re}(\Psi(\xi))} 
\leqslant |e^{-t_1\Psi(\xi)}|=e^{-t_1\mbox{\tiny Re}(\Psi(\xi))}$, (recall that $\mbox{Re}(\Psi(\xi))\ge0$). 
Since $e^{-t_1\Psi(\xi)} \in \mathbf{L}^1(\R^d)$, we obtain from (\ref{1581}) and 
Lebesgue's dominated convergence theorem, that $(t,x)\mapsto l_t(x)$ is continuous in $(t_0,x_0)$ and finally 
on $(0,\infty) \times \R^d$.
\end{proof}

\noindent Recall from Subsection \ref{9790} that the absolute continuity of $v(dy,dz)$ is actually enough for 
part 1.~of Theorem \ref{9049} to hold. Therefore, condition $(i)$ (or $(ii)$) of Proposition \ref{1823}
implies part 1.~of Theorem \ref{9049}. Moreover, condition $(i)$ (or $(ii)$) of Proposition \ref{1823} applied 
to the bivariate subordinator $(Y,Z)$ immediately implies the assumption of part 3.~of Theorem \ref{9049}. 
The mapping $(t,y,z)\mapsto p_t(y,z)$ is then locally bounded on $(0,\infty)^3$. Hence the assumption of assertion 
2'.~(stated after Theorem \ref{9049}) is satisfied for all $t_0$, $t_1$ such that $0<t_0<t_1<\infty$ and this implies 
part 2.~of Theorem \ref{9049} by monotone convergence.\\

Let us finally notice that assumptions of Theorem \ref{9049} could be weakened. Indeed, it appears from the 
limits in (\ref{2689}) that the continuity assumption of $v$ could be replaced by an assumption closed to 
the Riemann integrability of this function. (Recall that continuity of $v$ is not required in 
lemmas \ref{2833} and \ref{8333}.)

\section{Proof of Theorem \ref{3636}, Corollary \ref{9445} and Proposition \ref{5838}}\label{proof2}

Recall that $X$ is a real non killed L\'evy process and that we have assumed that $|X|$ is not a subordinator. 

\subsection{Some reminders in fluctuation theory}\label{fluctuation}\label{5399}
Recall that $\overline{X}_{t}=\sup \{X_{s}:0\leq s\leq t\}$ and let 
$\underline{X}_{t}=\inf \{X_{s}:0\leq s\leq t\}$. 
It is well known that both processes $\overline{X}-X$ and $\underline{X}-X$ are strongly Markovian. 
The state 0 is regular for itself, for the process $\overline{X}-X$ (resp.~$\underline{X}-X$) if and only if 
0 is regular for $[0,\infty)$ (resp.~$(-\infty,0]$) for the process $X$. When it is the case, we will simply 
write that $[0,\infty)$ (resp.~$(-\infty,0]$) is regular. 
Similarly, we will write that $(0,\infty)$ (resp.~$(-\infty,0)$)  is regular when 0 is regular for 
$(0,\infty)$ (resp.~$(-\infty,0)$) for the process $X$. Recall also that when $X$ is not a compound 
Poisson process, at least one of the half lines $(-\infty,0)$ or $(0,\infty)$ is regular. Moreover 
compound Poisson processes are the only L\'evy processes for which $[0,\infty)$ and $(-\infty,0]$ are 
regular but not $(0,\infty)$ and $(-\infty,0)$.\\

Let us briefly recall the definition of the local time at 0 of the process $\overline{X}-X$ which we  
denote by $L$. If $[0,\infty)$ is regular, $L$ is a continuous, increasing, additive functional such that 
$L_0=0$, a.s., and the support of the measure $dL_t$ is the set $\overline{\{t:X_t=\overline{X}_t\}}$. 
Moreover $L$ is the unique process, up to a multiplicative constant, satisfying these properties. 
We can normalize it for instance in the following way,
\[\e\left(\int_0^\infty e^{-t}\,dL_t\right)=1\,.\]
When $[0,\infty)$ is not regular, the set $\{t:(X-\overline{X})_t=0\}$ is discrete and the local time at 0 of 
the process $\overline{X}-X$ is defined as follows
\[L_t=\sum_{k=0}^{R_t}{\rm\bf e}^{(k)}\,,\]
where $R_0=0$, for $t>0$, $R_t=\mbox{Card}\{s\in(0,t]:X_s=\overline{X}_s\}$ and ${\rm\bf e}^{(k)}$, 
$k=0,1,\dots$ is a sequence of independent and exponentially distributed random variables with parameter
\[\gamma=\left(1-\e(e^{-\tau^+_0})\right)^{-1}\,,\]
and $\tau_0^+=\inf\{t>0:X_t>0\}$. The choice of the parameter $\gamma$ is consistent with the normalisation
$\e\left(\int_0^\infty e^{-t}\,dL_t\right)=1$ of the regular case. The process $L$ is called the local time 
at the supremum of $X$. The local time at the infimum is defined in the same way with respect to the process 
$\underline{X}-X$ and is denoted by $L^*$. It corresponds to the local time at the supremum of the dual process 
$X^*:=-X$.\\

Let us now recall the definition of the ladder processes. The ladder time processes $\tau$ and $\tau^*$, and the 
ladder height processes $H$ and $H^*$ are the following (possibly killed) subordinators:
\[\tau_t=\inf\{s:L_s>t\}\,,\;\;\tau^*_t=\inf\{s:L_s^*>t\}\,,\;\;H_t=X_{\tau_t}\,,\;\;H^*_t=
-X_{\tau_t^*}\,,\;\;t\ge0\,,\]
where $\tau_t=H_t=+\infty$, for $t\ge L_\infty$ and $\tau_t^*=H_t^*=+\infty$, for $t\ge L^*_\infty$. Recall that 
the processes $(\tau,H)$ and $(\tau^*,H^*)$ are bivariate subordinators. The renewal measures of 
the ladder processes $(\tau,H)$ and $(\tau^*,H^*)$ are the measures on $[0,\infty)^2$ defined by,
\[U(dt,dh)=\int_0^\infty\p(\tau_u\in dt,\,H_u\in dh)\,du\;\;\;\mbox{and}\;\;\;
U^*(dt,dh)=\int_0^\infty\p(\tau^*_u\in dt,\,H^*_u\in dh)\,du\,.\]

The drifts $d_\tau$ and $d_{\tau^*}$ of the subordinators $\tau$ 
and $\tau^*$ satisfy
\begin{equation}\label{delta}
\int_0^t\ind_{\{X_s=\overline{X}_s\}}\,ds=d_{\tau}L_t\,,\;\;\;\int_0^t\ind_{\{X_s=\underline{X}_s\}}\,ds=
d_{\tau^*}L_t^*.
\end{equation}
Recall that $d_{\tau}>0$ if and only if $(-\infty,0)$ is not regular. Similarly, $d_{\tau^*}>0$ if 
and only if 0 is not regular for $(0,\infty)$. Hence if $X$ is not a compound Poisson process, then 
$d_{\tau}d_{\tau^*}=0$ holds since 0 is necessarily regular for at least one of the half lines. If $X$ is 
a compound Poisson process, then $d_{\tau}>0$ and $d_{\tau^*}>0$.\\

Given our normalisation of the local time $L$, the drifts $d_H$ and $d_{H^*}$ of $H$ and $H^*$ satisfy 
$2d_Hd_{H^*}=\sigma^2$, where $\sigma$ represents the Brownian part of $X$ in its L\'evy-Khintchine decomposition
(this follows from the Wiener-Hopf factorisation of the characteristic exponent of $X$, see p.166 in \cite{be}).
When $X$ has bounded variation, we define its drift as the almost sure limit $d:=\lim_{t\rightarrow 0}X_t/t$. 
It was proved in Section 2.~of \cite{mi} that in this case, $d>0$ if and only if $d_H>0$. When $X$ has 
unbounded variation and no Brownian part, Theorem Kaa of \cite{vi} asserts that $d_H>0$ if and only if 
\[\int_0^1\frac{x}{\int_{-x}^0\bar{\bar{\pi}}_1(u)du}\hat{\pi}(x)\,dx<\infty\,,\] 
where $\pi$ is the L\'evy measure of $X$, for $x\in\mathbb{R}$, 
$\hat{\pi}(x)=\int_{\mathbb{R}}\left(\ind_{\{0<x\le u\}}+\ind_{\{u\le x<0\}}\right)d\pi(u)$ and for 
$x\in[-1,0)$, $\bar{\bar{\pi}}_1(x)=\int_{-1}^x\hat{\pi}(u)du$. Note also that if $d_\tau>0$ and $d_H>0$,
then $d_H=dd_\tau$, since $\lim_{t\rightarrow0}\frac{H_t}t=d_H=\lim_{t\rightarrow0}
\frac{X_{\tau_t}}{\tau_t}\frac{\tau_t}{t}=dd_\tau$, a.s.\\

Let us assume for the remainder of this section that $X$ is not a compound Poisson process. The It\^o measure 
$n^*$ of the excursions away from 0 of the process $X-\underline{X}$ is the characteristic measure of the 
Poisson point process 
\[t\mapsto \left\{
\begin{array}{ll}
\{(X-\underline{X})_{\tau_{t-}^*+s},\,0\le s<\tau_t^*-\tau_{t-}^*\}\;\;\;\mbox{if}\;\;\;\tau_{t-}^*<\tau_t^*\\
\delta\;\;\;\mbox{if}\;\;\;\tau_{t-}^*=\tau_t^*\,,
\end{array}\right.\]
where $\delta$ is some isolated point added to the space of excursions. 
We refer to \cite{be}, Chap.~IV, \cite {ky}, Chap.~6 and \cite{do} for a more detailed definition $n^*$. 
The measure $n^*$ is a Markovian measure whose semigroup is this of the killed L\'evy process when it enters in 
the negative half line. More specifically, for $x>0$, let us denote by $\mathbb{Q}_{x}^*$ the law of the process
$(X_t\ind_{\{t<\tau_0^-\}}+\infty\cdot\ind_{\{t\ge\tau_0^-\}},\,t\ge0)$ under $\p_x$,
where $\tau_0^-=\inf\{t>0:X_t<0\}$. That is for $\Lambda \in \mathcal{F}_{t}$,
\begin{equation}
\label{4524}
\mathbb{Q}_{x}^*(\Lambda ,t<\zeta )=\mathbb{P}_{x}(\Lambda ,\,t<\tau_{0}^-)\,.
\end{equation}
Then for all Borel positive functions $f$ and $g$ and for all $s,t>0$,
\begin{equation}
\label{4527}
n^*(f(X_t)g(X_{s+t}),s+t<\zeta)=n^*(f(X_t)\e^{\mathbb{Q}^*}_{X_t}(g(X_s)),s<\zeta)\,,
\end{equation}
where $\e_x^{\mathbb{Q}^*}$ means the expectation under $\mathbb{Q}_x^*$ and $\zeta$ is the lifetime of the 
excursions. Moreover, we define the entrance law $(q_t^*(dx),t>0)$ of the excursion measure $n^*$ as
\begin{equation}\label{4644}
n^*(f(X_t),t<\zeta)=\int_0^\infty f(x)q_t^*(dx)\,.
\end{equation}
The It\^o measure of the excursions away from 0 of the reflected process at its supremum 
$\overline{X}-X=X^*-\underline{X}^*$ is defined in the same way as for $X-\underline{X}$. It will be denoted by 
$n$ and its entrance law by $q_t(dx)$. We define the probability measures $\mathbb{Q}_x$ in the same way as in 
(\ref{4524}) with respect to the dual 
process $X^*=-X$. Let us finally recall that the entrance laws $q_t(dh)$ and $q_t^*(dh)$ are related to the 
renewal measures $U(dt,dh)$ and $U^*(dt,dh)$ of $(\tau,H)$ and $(\tau^*,H^*)$ as follows,
\begin{eqnarray}
d_{\tau^*}\delta_{\{(0,0)\}}(dt,dh)+q_t^*(dh)\,dt&=&U(dt,dh)\label{1511}\\
d_{\tau}\delta_{\{(0,0)\}}(dt,dh)+q_t(dh)\,dt&=&U^*(dt,dh).\label{1512}
\end{eqnarray}
These identities between measures on $[0,\infty)^2$ can be found in Lemma 1 of \cite{ch}. In particular, 
under the assumption of Theorem \ref{3636}, that is when $U(dt,dh)$ has a density on $(0,\infty)^2$, 
this density is the same as this of $q_t^*(dh)\,dt$, which explains the notation $q_t^*(h)$, $t,h\in(0,\infty)$
in this theorem. 

\subsection{Proof of Theorem \ref{3636}}\label{8277}

Part 1.~of Theorem \ref{3636} is a direct application of Lemma \ref{5698}. It suffices to consider 
$\mathcal{L}_t=(X_t,t)$ and $G(u,v)=u-f(v)$ in this lemma. (Note that $\p(T_f>0)=1$ follows from the fact 
that $f$ is positive. In particular, $X_{T_f-}$ is well defined on $\{T_f<\infty\}$.)

Then part 2.~is a consequence of Lemma \ref{5698} and Theorem \ref{9049}.
Recall first from Subsection \ref{5399} that $(\tau,H)$ is a bivariate subordinator. If the time $T_f=\inf\{t:X_t>f(t)\}$ 
satisfies $X_{T_f}=\overline{X}_{T_f}=f({T_f})$, then it is a zero of $\overline{X}-X$ and hence there is $S'$ 
such that $T_f=\tau_{S'}$ or $T_f=\tau_{S'-}$. Therefore $H_{S'}=f(\tau_{S'})$ or 
${X}_{\tau_{S'-}}=\overline{X}_{\tau_{S'-}}=f(\tau_{S'-})$ but we know, applying again Lemma \ref{5698} to the function 
$G$ as above, that in the latter case, $X$ is necessarily a.s.~continuous at time $\tau_{S'-}$. In particular, 
$H_{S'-}={X}_{\tau_{S'-}-}=f(\tau_{S'-})$. On the other hand, applying again Lemma \ref{5698}, it follows that if 
$H_{S'-}=f(\tau_{S'-})$, then $(\tau,H)$ is necessarily a.s.~continuous at time $S'$, so that 
$\tau_{S'}=\tau_{S'-}$. This proves that $H_{S'}=f(\tau_{S'})$, a.s. Moreover, since the subordinator $H$ is not lattice, 
the time $S'$ is necessarily unique, so that $S'=S=\inf\{t\ge0:H_t>f(\tau_t)\}$. This proves in particular that 
$\tau_S=T_f$.

Conversely assume that $S$ satisfies $H_{S}=X_{\tau_{S}}=\overline{X}_{\tau_{S}}=f(\tau_{S})$. Since $X$ is not lattice,
and $f$ is non increasing, a time $T$ satisfying $X_{T}=\overline{X}_{T}=f(T)$ is necessarily unique. Moreover, when $X$ 
is not a compound Poisson process, $\tau_S=\inf\{t:X_t>X_{\tau_S}\}$. Hence, since $f$ is non increasing, 
$\tau_{S}=\inf\{t:\overline{X}_t>f(t)\}=T_f$, so that $X_{T_f}=\overline{X}_{T_f}=f({T_f})$ and we have proved the 
identity
\[\{H_S=f(\tau_S),\,u_0<\tau_S<u_1\}=\{X_{T_f}=\overline{X}_{T_f}=f({T_f}),\,u_0<T_f<u_1\}\]
which allows us to conclude thanks to part 2.~of Theorem \ref{9049}.

\subsection{On assumptions of Theorem \ref{3636}}\label{preliminaries}

It is assumed in part 2.~of Theorem \ref{3636} that the renewal measure of the ladder process $(\tau,H)$ has  
continuous densities $(t,x)\mapsto q_t^*(x)$ on $(0,\infty)^2$. The next proposition provides conditions for this to hold.  
Moreover, when the later are satisfied, $q_t^*$ is positive on $(0,\infty)$, for all $t>0$. This ensures the positivity 
of the creeping probability (\ref{8225}) whenever $d_H>0$ or $d_\tau>0$ and $f(t_1)<f(t_0)$.   

\begin{proposition}\label{1753} 
Assume that $X$ is a non killed real L\'evy process which satisfies the equivalent conditions $(i)$ and $(ii)$ of 
Proposition $\ref{1823}$, then 
\begin{itemize}
\item[$1.$] the semigroup $q_t^*(x,dy)$ of the killed process $(X,\mathbb{Q}^*_x)$ defined in $(\ref{4524})$ 
is absolutely continuous and its densities $(t,x,y)\mapsto q_t^*(x,y)$ are continuous in 
$(0,\infty)^2\times[0,\infty)$.
\item[$2.$] Assume moreover that for all $c\ge0$, the process $(|X_t-ct|,\,t\ge0)$ is not a subordinator. 
Then the entrance law, $q_t^*(dx)$ is absolutely continuous on $[0,\infty)$ for all $t>0$ and there is 
a version $q_t^*(x)$, $x\ge0$, $t>0$ of its densities such that the function $(t,x)\mapsto q^*_t(x)$ is 
continuous on $(0,\infty)\times[0,\infty)$. Moreover, for all $t>0$, $q^*_t$ is positive on $(0,\infty)$. 

Denote by $h$ the renewal function of $H$, that is $h(x)=\int_0^\infty\p(H_t\le x)\,dt$, 
then $\lim_{y\rightarrow0+}q_t^*(x,y)=h(0)q_t(x)$.  Moreover for all $t>0$, 
$\lim_{x\rightarrow0+}q_t^*(x)=h(0)p_t(0)/t$. 
We then set $q_t^*(x,0)=h(0)q_t(x)$ and $q_t^*(0)=h(0)p_t(0)/t$, for all $t,x>0$.
\end{itemize}
\end{proposition}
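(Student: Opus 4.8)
The plan is to establish part $1$ by decomposing the process killed below $0$ at its first passage time $\tau_0^-=\inf\{t>0:X_t<0\}$, in the spirit of Section $2$ of \cite{cm}, and then to deduce part $2$ from part $1$ together with the Markov property (\ref{4527}) of the excursion measure $n^*$; the two boundary identities will come from the time reversal relating $X$ killed below $0$ to $-X$ killed below $0$.

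\emph{Part $1$.} Under condition $(i)$ the map $s\mapsto\|p_s\|_\infty$ is non-increasing, since $p_{s+s'}=p_s*p_{s'}$, hence finite and locally bounded on $(0,\infty)$; and $\p_x(\tau_0^-=t)=0$ for each fixed $t>0$, because on $\{\Delta X_t=0\}$ one has $X_{t-}=X_t=0$, while $\{\Delta X_t\neq0\}$ and $\{X_t=-x\}$ both have probability zero. The strong Markov property at $\tau_0^-$ then gives that $\p_x(X_t\in dy,\,\tau_0^-\le t)$ is absolutely continuous, with density $r_t(x,y):=\e_x[\ind_{\{\tau_0^-<t\}}\,p_{t-\tau_0^-}(y-X_{\tau_0^-})]$, so that
\[q^*_t(x,y)=p_t(y-x)-r_t(x,y),\qquad 0\le r_t(x,y)\le p_t(y-x),\qquad \int_0^\infty r_t(x,y)\,dy=\p_x(\tau_0^-\le t)\,.\]
Thus only the continuity of $(t,x,y)\mapsto r_t(x,y)$ is not routine. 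Writing $r_t(x,y)=\e[\ind_{\{\sigma_x<t\}}\,p_{t-\sigma_x}(y-x-X_{\sigma_x})]$ under $\p=\p_0$, with $\sigma_x=\inf\{s:X_s<-x\}$, I would first check that $x\mapsto(\sigma_x,X_{\sigma_x})$ is a.s.\ continuous at every fixed $x_0>0$: the map $x\mapsto\sigma_x$ is non-decreasing and right continuous, and $\p(\sigma_{x_0-}<\sigma_{x_0})=0$ since on that event $X_{\sigma_{x_0-}}=-x_0$, which requires either a jump landing on the fixed level $-x_0$ (probability zero) or $X$ creeping onto $-x_0$, and creeping downward forces $d_{H^*}>0$, hence $d_\tau=0$, hence $0$ regular for $(-\infty,0)$ and $\sigma_{x_0-}=\sigma_{x_0}$, a contradiction. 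Together with $\p_x(\sigma_{x_0}=t_0)=0$, this makes the integrand of $r_{t_n}(x_n,y_n)$ converge a.s.\ to that of $r_{t_0}(x_0,y_0)$; to pass to the limit one splits off the event $\{\tau_0^-\le t_0-\varepsilon\}$, on which the integrand is bounded by $\|p_\varepsilon\|_\infty$ and dominated convergence applies, and controls the remaining piece through the Chapman--Kolmogorov identity $r_t(x,y)=\int_{[0,\infty)}q^*_{t-a}(x,dz)\,r_a(z,y)$ and the bound $\int_{[0,\infty)}q^*_{t-a}(x,dz)\int_0^\infty r_a(z,y)\,dy=\p_x(t-a<\tau_0^-\le t)\to0$ as $a\to0$. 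This yields the continuity of $q_t^*(x,y)$ on $(0,\infty)^2\times[0,\infty)$, so $q^*_t(x,0)=\lim_{y\to0+}q^*_t(x,y)$ exists and only has to be identified.

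\emph{Part $2$.} Using part $1$, the Markov property (\ref{4527}) of $n^*$ and (\ref{4644}) give $q^*_{s+t}(dy)=\int_0^\infty q^*_t(dx)\,q^*_s(x,dy)$, whence $q^*_u(dy)=q^*_u(y)\,dy$ with $q^*_u(y)=\int_{[0,\infty)}q^*_a(dx)\,q^*_{u-a}(x,y)$ for any $a\in(0,u)$; since $q^*_{u-a}(x,y)\le\|p_\delta\|_\infty$ when $u-a\ge\delta$, and $q^*_a(dx)$ is a finite measure, dominated convergence gives the joint continuity of $(u,y)\mapsto q^*_u(y)$ on $(0,\infty)\times[0,\infty)$. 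Positivity of $q^*_t$ on $(0,\infty)$ follows from the same formula once one knows that $q^*_{t/2}(x,y)>0$ for some $x$ in the support of $q^*_{t/2}(dx)$ and every $y>0$; this is an irreducibility statement for the process killed below $0$ which holds precisely because neither $X-ct$ nor $ct-X$ is a subordinator for any $c\ge0$ (otherwise the killed process would be confined to a half line), and here I would reuse the corresponding argument of \cite{cm}. For the two boundary identities, the time reversal $(X_{(t-s)-}-X_t,\,0\le s\le t)\ed(-X_s,\,0\le s\le t)$ turns the event $\{X_t\in dy,\ t<\tau_0^-\}$ under $\p_x$ into the corresponding event for $-X$ under $\p_y$, so that $q^*_t(x,y)=\tilde q_t(y,x)$ with $\tilde q$ the semigroup of $-X$ killed below $0$; letting $y\to0+$ identifies $q^*_t(x,0)$ with the density at $x$ of $-X$ started at $0$ and killed below $0$, which by the excursion decomposition of that path over the excursions of $\overline X-X$ equals $h(0)q_t(x)$ (here $h(0)>0$ exactly when $0$ is irregular for $(0,\infty)$, otherwise both sides vanish). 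Inserting this into $q^*_t(0)=\int_{[0,\infty)}q^*_{t/2}(dz)\,q^*_{t/2}(z,0)=h(0)\int_0^\infty q^*_{t/2}(dz)\,q_{t/2}(z)$ and invoking the fluctuation identity $\int_0^\infty q^*_s(z)q_s(z)\,dz=p_{2s}(0)/(2s)$ (a consequence of the same time reversal and the Wiener--Hopf factorisation, readily checked for Brownian motion), one obtains $\lim_{x\to0+}q^*_t(x)=h(0)p_t(0)/t$, where $p_t(0)=\|p_{t/2}\|_2^2>0$. This fixes the values $q^*_t(x,0)=h(0)q_t(x)$ and $q^*_t(0)=h(0)p_t(0)/t$.

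The step I expect to be hardest is the continuity of $r_t$ when $\tau_0^-$ approaches the endpoint $t$, where $p_{t-\tau_0^-}$ is unbounded; this is exactly what the splitting and Chapman--Kolmogorov argument in part $1$ is designed to handle. The other sensitive point is the identification of the boundary constants $h(0)q_t(x)$ and $h(0)p_t(0)/t$, which relies on the time reversal duality and on the bookkeeping of the regularity of $0$ for the half lines.
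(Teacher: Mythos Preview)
Your approach to part~1 differs genuinely from the paper's. You work with the decomposition $q_t^*(x,y)=p_t(y-x)-r_t(x,y)$ and try to prove continuity of $r_t$ directly via continuity of $x\mapsto(\sigma_x,X_{\sigma_x})$, whereas the paper uses the bridge representation $q_t^*(x,y)=\p_{x,y}^t(\Lambda_t)\,p_t(y-x)$ (from Lemma~2 of \cite{ur}), together with the weak continuity of bridge laws $\p_{x,y}^t$ in $(t,x,y)$ (from \cite{cu}) and the portmanteau theorem, which makes the continuity essentially immediate. Your route is more self-contained but pays for it in technical difficulty near $\tau_0^-=t$; the paper's route outsources the hard analysis to cited results on bridges.

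There is a real gap in your treatment of the ``remaining piece''. You control $\int_{[0,\infty)}q^*_{t-a}(x,dz)\,r_a(z,y)$ through the bound $\int_0^\infty\!\big(\int q^*_{t-a}(x,dz)\,r_a(z,y)\big)\,dy=\p_x(t-a<\tau_0^-\le t)\to0$, but this is an $L^1(dy)$ estimate and says nothing about the value at the specific point $y_0$ where you need continuity. A workable repair for $y_0>0$ is to bound $q^*_{t-a}(x,z)\le\|p_{t_0/2}\|_\infty$ and integrate in $z$ instead, using time reversal to get $\int_0^\infty r_a(z,y)\,dz=\p(X_a\le y)-\p(\overline{X}_a<y)\le\p(\overline{X}_a\ge y)\to0$ uniformly for $y\ge\delta>0$; but this computation breaks down at $y_0=0$ (the right-hand side need not vanish), so the boundary case requires a separate argument that you have not supplied. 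The bridge approach avoids this entirely because $\p_{x,y}^t(\partial\Lambda_t)=0$ holds uniformly down to $y=0$.

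For part~2 your use of Chapman--Kolmogorov matches the paper's, and the paper likewise cites \cite{ch} and \cite{cm} rather than reproving the absolute continuity, positivity, and the boundary identities $q_t^*(x,0)=h(0)q_t(x)$ and $q_t^*(0)=h(0)p_t(0)/t$. Your derivation of the second identity rests on the formula $\int_0^\infty q^*_s(z)q_s(z)\,dz=p_{2s}(0)/(2s)$, which you check for Brownian motion but only gesture at in general; this is precisely the content of Proposition~1 in \cite{cm}, so you are effectively reproving that result and should either do so fully or cite it.
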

\begin{proof}
The first assertion is a slight reinforcement of Lemma 2 in \cite{ur} and our proof follows the same arguments.
Recall first the definition (\ref{4524}) of the killed process $(X,\mathbb{Q}^*_x)$ and let us denote by 
$q_t^*(x,dy)$ its semigroup. Then from Lemma 2 in \cite{ur}, under our assumption and thanks to Proposition 
\ref{1823}, this semigroup admits densities $q_t^*(x,y)$ which satisfy for all $t>0$, $x>0$, $y\ge0$, 
\[q_t^*(x,y)=\p_{x,y}^t(\Lambda_t)p_t(y-x)\,,\]
where $\p_{x,y}^t$ is the law of the bridge of $(X,\p)$ starting from $x$ and ending in $y$ at time $t$ and 
$\Lambda_t=\{\underline{X}_t>0\}\cup\{X_s>0,\,0\le s<t,\,X_t=0\}$. 
Here, in order to define $q_t^*(x,0)$ for all $t>0$ and $x>0$, we have included $y=0$ since the right member is 
then well defined. From Corollary 1 of \cite{cu}, the measures $\p_{x,y}^t$ are weakly continuous in 
$(t,x,y)\in(0,\infty)\times\mathbb{R}^2$. Then from the same arguments as in Lemma 2 of \cite{ur}, these measures 
satisfy $\p_{x,y}^t(\partial\Lambda_t)=0$ for all $t,x>0$ and $y\ge0$. 
Hence from portemanteau theorem and Proposition \ref{1823}, the mapping 
$(t,x,y)\mapsto\p_{x,y}^t(\Lambda_t)p_t(y-x)$ is continuous in $(0,\infty)^2\times[0,\infty)$.

In order to prove $2.$, recall from part 3.~of Lemma 1 in \cite{ch} that $q_t^*(dx)$ is absolutely continuous 
under our assumptions. Let us denote by $q_t^*(x)$ the corresponding densities and recall the Chapman-Kolmogorov 
equation,
\begin{equation}\label{6721}
q_{t+s}^*(y)=\int_0^\infty q_s^*(x)q_t^*(x,y)\,dx\,,\;\;\;s,t,y>0\,,
\end{equation}
which can be derived for instance from (\ref{4527}). From 
Proposition 1 in \cite{cm}, for all $x>0$ and $t>0$, $\lim_{y\rightarrow0+}q_t^*(x,y)=h(0)q_t(x)$. Let us set 
$q_t^*(x,0)=h(0)q_t(x)$. Let $\Psi$ be the characteristic exponent of $X$, that is 
$\e(e^{i\xi X_1})=e^{-\Psi(\xi)}$. From Lemma 2 in \cite{ur}, $q_t^*(x,y)\le p_t(y-x)$, for all 
$t>0$ and $x,y\in(0,\infty)^2$, so that, 
\begin{equation}\label{4722}
q_t^*(x,y)\le p_t(y-x)=\frac{1}{2\pi} \int_{\R} e^{-i(y-x) \xi} e^{-t\Psi(\xi)} d\xi\le
\frac{1}{2\pi}\int_{\R} |e^{-t\Psi(\xi)}| d\xi\,,
\end{equation} 
and this bound can be extended to $y=0$. From the same arguments as in the proof of Proposition \ref{1823}, 
the mapping $t\mapsto \int_{\R} |e^{-t\Psi(\xi)}| d\xi$ is continuous on $(0,\infty)$. Let 
$(t_0,y_0)\in(0,\infty)\times[0,\infty)$, then from (\ref{4722}) and the continuity of 
$t\mapsto \int_{\R} |e^{-t\Psi(\xi)}| d\xi$, there is $\varepsilon>0$ and $C_\varepsilon>0$ 
such that $q_t^*(x,y)\le C_\varepsilon$, for all $x>0$, $y\ge0$ and $t\in(t_0-\varepsilon,t_0+\varepsilon)$.
Moreover, from (\ref{4527}) $\int_0^\infty q_s^*(x)\,dx=n^*(s<\zeta)<\infty$, 
hence, (\ref{6721}) can be extended to $y=0$ and we can set $q_t^*(0)=\lim_{y\rightarrow0+}q_t^*(y)$, $t>0$. 
Moreover, continuity of $(t,y)\mapsto q_t^*(y)$ at $(t_0,y_0)$ follows from (\ref{6721}), continuity of 
$(t,x,y)\mapsto q_t^*(x,y)$ in $(0,\infty)^2\times[0,\infty)$ proved in assertion 1.~and 
Lebesgue theorem of dominated convergence. 

The positivity of $q_t^*$ for all $t>0$ follows from Lemma 2 in \cite{cm} and the two last assertions in $2.$ 
follow from Proposition 1 in \cite{cm}.
\end{proof}

\subsection{Proof of Corollary \ref{9445}}
  
As recalled in Subsection \ref{fluctuation}, since the process $Y_t:=X_t+at$, $t\ge0$ has bounded variation and 
positive drift, its upward ladder height process also has positive drift. Moreover, by assumption, for all $t>0$ the 
distribution $\p(X_t\in dx)$ is absolutely continuous with a bounded density, and for all $c\ge0$, the process 
$(|X_t-ct|,\,t\ge0)$ is not a subordinator. Then these conditions are clearly also satisfied by the process $Y$. 
Let us denote by $q_t^{*,a}(x)$ the density of the renewal measure of the ladder process associated 
to $Y$. Then from part 2.~of Proposition \ref{1753}, $(t,x)\mapsto q_t^{*,a}(x)$ is continuous on $(0,\infty)^2$. 
Moreover, $q_t^{*,a}$ is positive on $(0,\infty)$ for all $t>0$. These properties 
imply, from part 2.~of Theorem \ref{3636}, that the process $Y$ creeps at its supremum through the function 
$f(t)+at$ over the interval $(t_0,t_1)$, so that $X$ creeps through the function $f(t)$ over the interval 
$(t_0,t_1)$.

\subsection{Proof of Proposition \ref{5838}}

Proposition \ref{5838} is a consequence of Proposition \ref{8983} and its proof follows the same arguments as those used
for the proof of Theorem \ref{3636} in Subsection \ref{8277}. The details are omitted.

\section{Proof of Theorem \ref{3737}}\label{proof3}

The proof is a direct application of the extension of Tanaka's construction of random walks conditioned
to stay positive obtained in \cite{do1}. Let us first state this transformation. Recall that 
$\overline{X}_t=\sup_{s\le t}X_s$ and let us set for $t>0$,
\[g_t=\sup\{s<t:X_s=\overline{X}_s\}\;\;\;\mbox{and}\;\;\;d_t=\inf\{s>t:X_s=\overline{X}_s\}\,.\]
Then the process defined by
\begin{eqnarray*}
W_0&=&0\\
W_t&=&\overline{X}_{d_t}+\ind_{\{d_t>g_t\}}(\overline{X}-X)_{(d_t+g_t-t)-}\,,\;\;\;t>0\,,
\end{eqnarray*}
has the law of $X$ conditioned to stay positive, $X^\uparrow$. According to this construction, the process 
$W$ is obtained by time reversing of each excursion of the reflected process $\overline{X}-X$ and then by gluing 
them back together.

The assertion $0<\sigma_f<\infty$, a.s. is straightforward since $\lim_{t\rightarrow0+}f(t)>0$ and 
$\lim_{t\rightarrow\infty}X_t^\uparrow=\infty$, almost surely. Then we derive from the above construction that 
almost surely, 
\[\overline{X}_t=\underline{\underline{W}}_t\,,\;\;\;\mbox{for all $t\ge0$.}\]
Moreover, the sets $\{t\ge0:X_t=\overline{X}_t\}$  and 
$\{t\ge0:W_t=\underline{\underline{W}}_t\}$ are identical. It follows that 
$\overline{X}_{T_f}=\underline{\underline{W}}_{T_f}$, almost surely and that $X_{T_f}=\overline{X}_{T_f}$ if and only 
if $W_{T_f}=\underline{\underline{W}}_{T_f}$. Then it remains to observe that on the set 
$\{X_{T_f}=\overline{X}_{T_f}\}=\{W_{T_f}=\underline{\underline{W}}_{T_f}\}$, $T_f=\sup\{t:W_t\le f(t)\}=\sigma_f$ and 
this proves parts 1.~and 2.~of Theorem \ref{3737}.

\newpage


\begin{thebibliography}{99}

\bibitem{be} \sc Bertoin, J. \it L\'evy processes. \rm Cambridge University Press, 1996.

\bibitem{ch} \sc Chaumont, L. (2013) \rm On the law of the supremum of L\'evy processes. 
{\it Ann. Probab.}, Vol. 41, No. 3A, 1191-1217.

\bibitem{cd} \sc Chaumont, L.~and Doney, R.A. (2005) \rm On L\'evy processes conditioned to stay 
positive. {\it Electron. J. Probab.} 10, no. 28, 948--961. 

\bibitem{cm} \sc Chaumont, L.~and Ma\l ecki, J. \rm On the asymptotic behavior of the density of the 
supremum of L\'evy processes. 
{\it Annales de l'Institut Henri Poincar\'e}, Vol. 52, No. 3, 1178-1195, (2016).

\bibitem{cu} \sc Chaumont, L., and Uribe Bravo, G. \rm Markovian Bridges: Weak continuity and pathwise constructions. 
{\it Ann. Probab.}, Vol. 39, No. 2, 609--647, (2011).

\bibitem{chi} \sc Chi, Z. \rm On exact sampling of the first passage event of a L\'evy process with infinite 
L\'evy measure and bounded variation. {\it Stochastic Process. Appl.}, 126, no. 4, 1124--1144, (2016). 

\bibitem{do} \sc Doney, R.A. \it Fluctuation theory for L\'evy processes. \rm Lectures from the 35th Summer 
School on Probability Theory held in Saint-Flour, July 6--23, 2005. Lecture Notes in Mathematics, 1897. 
Springer, Berlin, 2007.

\bibitem{do1} \sc Doney, R.A. \rm Tanaka's construction for random walks and L\'evy processes. 
{\it S\'eminaire de Probabilit\'es XXXVIII}, 1--4, Lecture Notes in Math., 1857, Springer, Berlin, 2005. 

\bibitem{gm} \sc Griffin, P.S., and Maller, R.A. \rm The time at which a L\'evy process creeps. \rm
{\it Electronic Journal of Probability}, Vol 16, (2011).

\bibitem{ja} \sc Jakubowski, T. \it  On Harnack inequality for $\alpha$-stable Ornstein-Uhlenbeck processes. 
\rm Math. Z., 258(3) : 609-628, 2008.

\bibitem{ke} \sc Kesten, H. \rm Hitting probabilities of single points for processes with stationary independent 
increments.  {\it Memoirs of the American Mathematical Society}, No. 93 American Mathematical Society, Providence, 
R.I. 1969.

\bibitem{ky} \sc Kyprianou, A.E. \it Fluctations of L\'evy processes with applications. \rm Springer, 2014.

\bibitem{mi} \sc  Millar, P. W. \rm Exit properties of stochastic processes with stationary independent increments.
{\it Trans. Amer. Math. Soc.} \rm 178, 459--479, (1973). 

\bibitem{ne} \sc Neveu, J. \rm  Une g\'en\'eralisation des processus \`a accroissements positifs ind\'ependants. 
{\it Abh. Math. Sem. Univ.} Hamburg 25, 36--61, (1961). 

\bibitem{ro} \sc Rogers, L. C. G. \rm A new identity for real L\'evy processes.  
{\it Ann. Inst. H. Poincar\'e Probab. Statist.} 20, no. 1, 21--34, (1984). 

\bibitem{sa} \sc Sato, K. \it L\'evy processes and infinitely  divisible distribution. \rm Cambridge University 
Press, 1999.

\bibitem{sw} \sc Savov, M, and Winkel, M \rm Right inverse of L\'evy processes : the excursion measure in 
the general case. {\it Electron. comm. Probab.} 15, 572--584, (2010). 

\bibitem{ur} \sc Uribe, G. \rm G. Bridges of L\'evy processes conditioned to stay positive.
{\it Bernoulli} (1) 190--206, (2014).

\bibitem{vi} \sc Vigon, V. \rm Votre L\'evy rampe-t-il ? {\it J. London Math. Soc.} (2) 65, 243--256, (2002).

\end{thebibliography}
\end{document}